\newtheorem{theorem}{Theorem}[section]
\newtheorem{Counter-example}[theorem]{Counter example}
\newtheorem{Claim}[theorem]{Claim}
\newtheorem{Lemma}[theorem]{Lemma}
\newtheorem{Proposition}[theorem]{Proposition}
\newtheorem*{theorem*}{Theorem}
\newcommand{\supp}{\text{supp}}
\newcommand{\diam}{\text{diam}}
\newcommand\blfootnote[1]{%
  \begingroup
  \renewcommand\thefootnote{}\footnote{#1}%
  \addtocounter{footnote}{-1}%
  \endgroup
}
\title{A simultaneous version of Host's equidistribution Theorem}
\author{Amir Algom}
\date{}
\begin{document}
\maketitle

\begin{abstract}
Let\blfootnote{Supported by ERC grant 306494 and ISF grant 1702/17.} $\mu$ be a  probability measure on $\mathbb{R}/\mathbb{Z}$ that is ergodic under the $\times p$ map,  with positive entropy. In 1995, Host \cite{Host1995normal} showed that if \blfootnote{2010 Mathematics Subject Classification 11K16, 11A63, 28A80, 28D05.} $\gcd(m,p)=1$  then $\mu$ almost every point is normal in base $m$. In 2001, Lindenstrauss \cite{Elon2001host} showed that the conclusion holds under the weaker assumption that $p$ does not divide any power of $m$. In 2015, Hochman and Shmerkin \cite{hochmanshmerkin2015} showed that this holds in the "correct" generality, i.e. if $m$ and $p$ are independent. We prove a simultaneous version of this result: for $\mu$ typical $x$, if $m>p$ are independent, we show that the orbit of $(x,x)$ under $(\times m, \times p)$ equidistributes for the product of the Lebesgue measure with $\mu$. We also show that if $m>n>1$ and $n$ is independent of $p$ as well, then the orbit of $(x,x)$ under $(\times m, \times n)$ equidistributes for the Lebesgue measure.
\end{abstract}

\section{Introduction}
\subsection{Background and main results}
Let $p$ be an integer greater or equal to $2$. Let $T_p$ be the $p$-fold map of the unit interval,  $$T_p (x) = p\cdot x \mod 1.$$
Let $m>1$ be an integer independent of $p$, that is, $\frac{\log p}{\log m} \notin \mathbb{Q}$. Henceforth, we will write $m \not \sim p$ to indicate that $m$ and $p$ are independent. In 1967 Furstenberg \cite{furstenberg1967disjointness} famously proved that if a closed subset of $\mathbb{T}:=\mathbb{R}/ \mathbb{Z}$ is jointly invariant under $T_p$ and $T_m$, then it is either finite or the entire space $\mathbb{T}$.  A well known Conjecture of Furstenberg about a measure theoretic analouge of this result, is that the only continuous probability measure jointly invariant under $T_p$ and $T_m$, and ergodic under the $\mathbb{Z}_+ ^2$ action generated by these maps, is the Lebesgue measure.  The best results towards this Conjecture, due to Rudolph \cite{Rudolph1990dis} for $p,m$ such that $\gcd(p,m)=1$ and later to Johnson \cite{Johnson1992dis} for $p \not \sim m$, is that it holds if in addition the measure has positive entropy with respect to the $\mathbb{Z}_+$ action generated by $T_p$ (see also the earlier results of Lyons \cite{Lyons1988dis}).

In 1995 Host proved the following pointwise strengthening of Rudolph's Theorem.  Recall that a number $x\in [0,1]$ is said to be normal in base $p$ if the sequence $\lbrace T_p ^k x \rbrace_{k \in \mathbb{Z}_+}$ equidistributes for the Lebesgue measure on $[0,1]$. Equivalently, the sequence of digits in the base $p$ expansion of $x$ has the same limiting statistics as an IID sequence of digits with uniform marginals.
\begin{theorem*} (Host, \cite{Host1995normal})
Let $p,m\geq 2$ be integers such that $\gcd(p,m)=1$. Let $\mu$ be $T_p$ invariant ergodic measure with positive entropy. Then $\mu$ almost every $x$ is normal in base $m$.
\end{theorem*}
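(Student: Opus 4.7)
The plan is to verify Weyl's equidistribution criterion: for every nonzero integer $\ell$, one must show
\[
S_N(x) := \frac{1}{N}\sum_{k=0}^{N-1} e^{2\pi i \ell m^k x} \longrightarrow 0 \quad \text{for } \mu\text{-a.e. } x.
\]
A standard Borel--Cantelli argument along a sufficiently sparse subsequence $\{N_j\}$ reduces this pointwise statement to a quantitative $L^2(\mu)$ decay estimate. Expanding the square and using Fubini gives
\[
\int |S_N|^2\, d\mu \;=\; \frac{1}{N^2}\sum_{0 \le j,k < N}\hat\mu\bigl(\ell(m^k - m^j)\bigr),
\]
so the task reduces to controlling Fourier coefficients of $\mu$ at frequencies that are differences of integer multiples of powers of $m$.

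Next I would disintegrate $\mu$ along the filtration of $p$-adic partitions. Let $\mathcal{B}_n$ be the $\sigma$-algebra generated by the partition of $[0,1)$ into intervals $I_a^n := [a/p^n, (a+1)/p^n)$ for $a = 0, \ldots, p^n-1$, and write $\tilde\mu_a^n$ for the renormalized pushforward of $\mu|_{I_a^n}$ under the affine map $I_a^n \to [0,1)$. By the Shannon--McMillan--Breiman theorem, $\mu(I_a^n) \asymp p^{-nh}$ on typical cylinders, where $h = h(\mu, T_p) > 0$. For any integer $r$, splitting $x = a/p^n + y/p^n$ with $x \in I_a^n$ yields the decomposition
\[
\hat\mu(r) \;=\; \sum_{a=0}^{p^n-1}\mu(I_a^n)\, e^{-2\pi i r a / p^n}\, \int_0^1 e^{-2\pi i r y / p^n}\, d\tilde\mu_a^n(y),
\]
which cleanly separates an outer character sum on $\mathbb{Z}/p^n\mathbb{Z}$ from inner oscillatory integrals of the rescaled conditional measures.

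The coprimality hypothesis enters through the outer character sum. Since $m$ is a unit modulo $p^n$, the map $a \mapsto m^k a \bmod p^n$ is a bijection of residue classes, and the orbit $\{m^k \bmod p^n\}_k$ has large order in $(\mathbb{Z}/p^n\mathbb{Z})^*$. Choosing $n = n(k)$ to grow proportionally to $k$ (so that $p^n$ and $m^k$ live at comparable scales), one obtains cancellation in the outer character sum weighted by the cylinder masses $\mu(I_a^n)$, while positive entropy constrains the rescaled measures $\tilde\mu_a^n$ to be sufficiently spread at scale $1/p^n$ to prevent catastrophic reinforcement of that cancellation by the inner integrals.

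The main obstacle, in my assessment, is converting the entropic information into honest averaged Fourier decay for the family $\{\tilde\mu_a^n\}$. Positive entropy gives only a coarse dimensional lower bound, not direct Fourier-analytic control, so one must argue through a martingale-type scheme combining the $L^2$ ergodic theorem for $T_p$-invariant observables with the tower property of conditional expectations on the filtration $(\mathcal{B}_n)$. This coupling between the entropy hypothesis and the arithmetic of $\gcd(m,p) = 1$ is the technical core of the argument, and it is what makes the passage from Rudolph's measure-rigidity statement to a genuinely pointwise conclusion possible.
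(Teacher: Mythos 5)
The paper does not prove Host's theorem; it is cited as a known background result (it is the starting point, together with Lindenstrauss's and Hochman--Shmerkin's extensions, for the paper's own simultaneous version). So there is no proof in the paper against which to compare your proposal, and I can only assess the proposal on its own terms.

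Your outline is in the right spirit and does follow the broad contours of Host's actual argument: reduce to Weyl's criterion, pass to an $L^2$ estimate and Borel--Cantelli along a sparse subsequence, expand the second moment to arrive at the Fourier coefficients $\hat\mu\bigl(\ell(m^k - m^j)\bigr)$, and disintegrate $\mu$ along the $p$-adic filtration so that positive entropy and the coprimality $\gcd(m,p)=1$ can both be brought to bear. However, as you yourself flag in the final paragraph, the proposal stops exactly where the real work begins: you do not supply the mechanism by which positive entropy and unit-modulo-$p^n$ arithmetic are combined to produce an honest quantitative bound on the weighted character sums. Positive entropy alone does not give Fourier decay of $\mu$ (indeed $T_p$-invariant measures of positive entropy can have Fourier coefficients that do not tend to zero), so the statement that the rescaled conditional measures are ``sufficiently spread to prevent catastrophic reinforcement'' needs to be made precise and proved, and this is the delicate second-moment/martingale estimate that constitutes the core of Host's paper. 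As written, the proposal is a plausible plan with a correctly identified gap, not a proof.
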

Host's theorem can be shown to imply Rudolph's Theorem, but is more constructive in the sense that it proves that a large collection of measures satisfy a certain regularity property. Host's Theorem is also closely related to classical results of Cassels  \cite{Cassels1960normal} and Schmidt \cite{Schmidt1960normal} from around 1960, that proved a similar result for certain Cantor-Lebesgue type measures. This was later generalized by Feldman and Smorodinsky \cite{Feldman1992normal} to all non-degenerate Cantor-Lebesgue measures (in fact, weakly Bernoulli)  with respect to any base $p$ (though applying to a less general class of measures, the latter results nonetheless hold for any independent integers $p,m\geq 2$). We remark that the works of Meiri \cite{Meiri1998host} and of Hochman and Shmerkin \cite{hochmanshmerkin2015} contain excellent expositions on Host's Theorem, and on the  results of Cassels and Schmidt and some of the research that followed, respectively.

The assumption made on the integers $p,m$ in Host's Theorem, however, is stronger than it "should" be. Namely, it is stronger than assuming  that $p \not \sim m$. In 2001, Lindenstrauss \cite{Elon2001host} showed that the conclusion of Host's Theorem holds under the weaker assumption that $p$ does not divide any power of $m$. Finally, in 2015, Hochman and Shmerkin \cite{hochmanshmerkin2015} proved that Host's Theorem holds in the "correct" generality, i.e. when $p\not \sim m$.

Now, let $\mu$ be a measure as in Host's Theorem, with $p\not \sim m$. Then, on the one hand, by the results of Hochman and Shmerkin, for $\mu$ almost every $x$, its orbit under $T_m$ equidistributes for the Lebesgue measure. On the other hand, for $\mu$ almost every $x$, its orbit under $T_p$ equidistributes for $\mu$ (this is just the ergodic Theorem).  The main result of this paper is that this holds simultaneously. 
\begin{theorem} \label{Main Theorem}
Let $\mu$ be a $T_p$ invariant ergodic measure with $\dim \mu>0$.  Let $m>n>1$ be integers such that $m\not \sim p$. 
\begin{enumerate}
\item If $n=p$ then
\begin{equation*} 
\frac{1}{N} \sum_{i=0} ^{N-1} \delta_{(T_m ^i (x) , T_n ^i (x))} \rightarrow \lambda \times \mu, \quad \text{ for } \mu \text{ almost every } x,
\end{equation*} 
where the convergence is in the weak-* topology, and $\lambda$ is the Lebesgue measure on $[0,1]$.

\item If $n \not \sim p$ then
\begin{equation*} 
\frac{1}{N} \sum_{i=0} ^{N-1} \delta_{(T_m ^i (x) , T_n ^i (x))} \rightarrow \lambda\times \lambda, \quad \text{ for } \mu \text{ almost every } x.
\end{equation*} 
\end{enumerate}
\end{theorem}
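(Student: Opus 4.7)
The plan is to combine a Fourier reduction with the Hochman--Shmerkin theorem. By Stone--Weierstrass, the weak-$*$ convergence in Theorem~\ref{Main Theorem} is equivalent to the claim that for every $(a,b) \in \mathbb{Z}^2$,
\[
S_N^{a,b}(x) := \frac{1}{N}\sum_{i=0}^{N-1} e^{2\pi i(am^i + bn^i)x} \longrightarrow \int e^{2\pi i(au+bv)}\, d(\lambda \times \mu^\star)(u,v)
\]
for $\mu$-a.e.\ $x$, where $\mu^\star = \mu$ in case (1) and $\mu^\star = \lambda$ in case (2). The case $a=0$ reduces to the Birkhoff ergodic theorem for $T_p = T_n$ in case (1), and to Hochman--Shmerkin applied with $n \not\sim p$ in case (2). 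The case $a\neq 0$, $b=0$ is immediate from Hochman--Shmerkin applied with $m \not\sim p$. Both remaining targets equal $0$, so it suffices to show $S_N^{a,b}(x) \to 0$ for $\mu$-a.e.\ $x$ whenever $a \neq 0$ and $b \neq 0$.

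For this main case, I would invoke the scenery / CP-chain framework underlying Hochman--Shmerkin: they prove that for $\mu$-a.e.\ $x$, the $m^{-i}$-magnifications of $\mu$ around $T_p^i x$ equidistribute for an ergodic CP-distribution on measures, from which Fourier decay along $\{am^i\}$ follows by an ergodic-theoretic computation on the space of magnifications. The key asymptotic input is that $c_i := am^i$ has ratios $c_{i+1}/c_i = m > 1$ bounded away from $1$; our sequence $c_i = am^i + bn^i$ satisfies the (slightly weaker) asymptotic $c_{i+1}/c_i \to m$, since $m > n$. Thus one may hope to substitute $\{am^i + bn^i\}$ for $\{am^i\}$ throughout the Hochman--Shmerkin machinery and conclude $S_N^{a,b}(x) \to 0$ with essentially the same argument.

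An equivalent formulation is via joinings: any weak-$*$ accumulation point $\nu$ of the empirical measures $\nu_N^x := \frac{1}{N}\sum_i \delta_{(T_m^i x, T_n^i x)}$ is a $T_m \times T_n$-invariant Borel probability on $\mathbb{T}^2$, whose marginals are $\lambda$ and $\mu^\star$ by the analysis above. The main obstacle is that general $T_m \times T_n$-invariant measures on $\mathbb{T}^2$ with prescribed marginals need not be products — the toral endomorphism $T_m \times T_n$ admits many non-product invariant measures — so the proof must exploit the specific structure that $\nu$ arises from the $\mu$-generic diagonal orbit. I expect the heart of the argument to lie in establishing that the CP-chain scenery at $T_p^i x$ and the value $T_p^i x$ itself jointly equidistribute for a product distribution on (magnifications) $\times \mathbb{T}$, from which the two coordinates decouple in the limit and $\nu = \lambda \times \mu^\star$ follows.
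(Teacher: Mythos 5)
Your reduction to Fourier coefficients, and the observation that the case $a\neq 0$, $b\neq 0$ is the crux, are both correct, and the joining reformulation at the end correctly names the central difficulty (non-product invariant measures for $T_m\times T_n$). But the two strategies you sketch for the hard case do not match the paper, and each has a real gap.

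First, ``substitute $\{am^i+bn^i\}$ for $\{am^i\}$ throughout Hochman--Shmerkin'' does not go through. Their argument is not a lacunarity-based Fourier decay estimate where only the asymptotic ratio $c_{i+1}/c_i$ matters; it is a structure theorem about weak-$*$ accumulation points of $\frac{1}{N}\sum_{i<N}\delta_{T_m^i x}$, relying on the dynamical interpretation of $am^i x$ as evaluation along the $T_m$-orbit, on the scenery flow, and on a rigidity statement for $T_m$-invariant measures (that the only one satisfying a dimension-under-convolution condition is $\lambda$). The sequence $am^i+bn^i$ has no single-map interpretation on $\mathbb{T}$; the dynamics live on $\mathbb{T}^2$ under $T_m\times T_n$, and the whole point is that the analogous rigidity theorem on $\mathbb{T}^2$ is a new, nontrivial result.

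Second, the ``decoupling'' you predict --- that the scenery $\mu_{x,i\log m}$ and $T_n^i x$ jointly equidistribute for a \emph{product} distribution $P\times\rho$ --- is not what the paper proves and may well be false. The paper's Claim~\ref{Theorem 5.1} considers the joint empirical distribution $R$ of $(\mu_{x,i\log m},T_n^i x)$ and disintegrates $R=\int R_y\,d\rho(y)$; the only property extracted is that the conditional scenery distributions $P_1R_y$ are absolutely continuous with respect to $P$ (which is automatic from disintegration). This weak fact is enough because of two further ingredients you do not have: (i) Hochman--Shmerkin's Lemma~\ref{Lemma 5.8} (dimension $1$ of $\tau*\eta$ for $P$-a.e.\ $\eta$) passes to the conditionals by absolute continuity, giving $\dim\tau*P_1\nu_y=1$ for $\rho$-a.e.\ $y$; and (ii) a rigidity statement (Claim~\ref{Claim rigidty}) for $T_m\times T_n$-invariant measures, which says that an invariant $\nu$ with marginals $\lambda,\rho$ satisfying this fiberwise convolution-dimension property must be $\lambda\times\rho$. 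The proof of (ii) is the genuinely new part of the paper and goes through entropy theory --- the Abramov--Rokhlin formula and the Ledrappier--Young-type dimension formula for $T_m\times T_n$-invariant measures (Theorem~\ref{Theorem Led young}) --- none of which appear in your sketch. Finally, even after all this, $m\not\sim p$ does not by itself guarantee the spectral condition $\frac{k}{\log m}\notin\Sigma(P,S)$, so the paper needs the phase-measure machinery from Section~8 of Hochman--Shmerkin to remove it; this step is also absent from your proposal.
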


Several remarks are in order. First,  the assumption that $\mu$ has positive dimension and the assumption that it has positive entropy are equivalent, so there is no discrepancy between the assumptions in Host's Theorem and those in Theorem \ref{Main Theorem} (see Section \ref{Section dimension} for a discussion on the dimension theory of measures).  Secondly, in the second part of the Theorem we do not need that $m$ and $n$ are independent, only that $m>n$. In addition, we can prove a version of Theorem \ref{Main Theorem}  where the initial point $(x,x)$ is replaced with $(f(x),g(x))$ for $f,g$ that are non singular affine maps of $\mathbb{R}$ that satisfy some extra mild conditions. This is explained in Section  \ref{Section pert}.

Theorem \ref{Main Theorem} can also be considered as part of the following general framework. Let $S,T \in \text{End}(\mathbb{T}^2)$, and let $\nu$ be an $S$ invariant probability measure. The idea is to study the orbits $\lbrace T^k x \rbrace_{k\in \mathbb{Z}_+}$ for $\mu$ typical $x$. In our situation,
\begin{equation*}
S = \begin{pmatrix}
p & 0 \\
0& p 
\end{pmatrix}, \quad 
T = \begin{pmatrix}
m & 0 \\
0& n 
\end{pmatrix}
\end{equation*}
and the measure $\nu=\Delta \mu$, where $\Delta:\mathbb{T}\rightarrow \mathbb{T}^2$ is the map $\Delta(x)=(x,x)$.

Problems around this framework were studied by several authors. Notable related examples are the works of Meiri and Peres \cite{Meiri1998Peres}, and the subsequnet work of Host \cite{Host2000high}. Meiri and Peres prove a Theorem similar to ours, with the following differences:
\begin{itemize}
\item They work with two general diagonal endomorphisms $S$ and $T$, but they require that the corresponding diagonal entries $S_{i,i}$ and $T_{i,i}$ be larger than $1$ and co-prime.

\item They allow for more general measures then the one dimensional measures that we work with (in Theorem \ref{Main Theorem} we work with measures on the diagonal of $\mathbb{T}^2$).
\end{itemize}
Host in turn has some requirements on $S$ and the measure that are more general than ours, but also requires that $\det (S)$ and $\det(T)$ be co-prime, and that for every $k$ the characteristic polynomial of  $T^k$ is irreducible over $\mathbb{Q}$ (clearly this is not the case here). The results of both Host, and Meiri and Peres, extend to any $d$ dimensional torus.

Our proof of Theorem \ref{Main Theorem} is inspired by the work of Hochman and Shmerkin \cite{hochmanshmerkin2015}. In particular,  the scenery of the measure $\mu$ at typical points plays a pivotal role in our work. We devote  the next Section to defining this scenery and some related notions, and to formulating the main technical tool used to prove Theorem \ref{Main Theorem}.

\subsection{On sceneries of measures and the proof of Theorem \ref{Main Theorem}} \label{Section scenrey flow}
We first recall some notions that were defined in (\cite{hochmanshmerkin2015}, Section 1.2). However, we remark that these notions and ideas have a long history, going back varisouly to Furstenberg (\cite{furstenberg1970intersections}, \cite{furstenberg2008ergodic}), Z\"{a}hle \cite{Zahle1988self}, Bedford and Fisher \cite{BedfordFisher1997Ratio}, M\"{o}rters and Preiss \cite{Morters1998Preiss}, and Gavish \cite{Gavish2008scaling}. See (\cite{hochmanshmerkin2015}, Section 1.2) and  \cite{hochman2010dynamics} for some further discussions and comparisons. 

For a compact metric space $X$ let $\mathcal{P}(X)$ denote the space of probability measures on $X$. Let
\begin{equation*}
\mathcal{M}^{\square} = \lbrace \mu \in \mathcal{P}([-1,1]):\quad 0\in \supp(\mu) \rbrace.
\end{equation*}
For $\mu \in \mathcal{M}^{\square}$ and $t\in \mathbb{R}$ we define the scaled measure $S_t \mu \in \mathcal{M}^{\square}$ by
\begin{equation*}
S_t \mu (E) = c \cdot \mu (e^{-t}E\cap [-1, 1]),\quad \text{where } c \text{ is a normalizing constant}.
\end{equation*}
For $x\in \supp (\mu)$ we similarly define the translated measure by
\begin{equation*}
\mu^x(E) = c' \cdot  \mu ( (E+x)\cap [-1,1]),\quad  \text{where } c' \text{ is a normalizing constant}.
\end{equation*}
The scaling flow is the Borel $\mathbb{R}^+$ flow $S=(S_t)_{t\geq0}$ acting on $\mathcal{M}^{\square} $. The scenery of $\mu$ at $x\in \supp(\mu)$ is the orbit of $\mu^x$ under $S$, that is, the one parameter family of measures $\mu_{x,t}:= S_t(\mu^x)$ for $t\geq0$. Thus, the scenery of the measure at some point $x$ is what one sees as one "zooms" into the measure.

Notice that $\mathcal{P}(\mathcal{M}^{\square} ) \subseteq \mathcal{P}(\mathcal{P}([-1,1]))$. As is standard in this context, we shall refer to elements of $\mathcal{P}(\mathcal{P}([-1,1]))$ as distributions, and to elements of $ \mathcal{P}(\mathbb{R})$ as measures. A measure $\mu \in P(\mathbb{R})$ generates a distribution $P\in \mathcal{P}(\mathcal{P}([-1,1]))$ at $x\in \supp(\mu)$ if the scenery at $x$ equidistributes for $P$ in $\mathcal{P}(\mathcal{P}([-1,1]))$, i.e. if
\begin{equation*}
\lim_{T\rightarrow \infty} \frac{1}{T} \int_0 ^T f(\mu_{x,t}) dt = \int f(\nu) dP(\nu),\quad \text{ for all } f\in C( \mathcal{P}([-1,1])).
\end{equation*}
and $\mu$ generates $P$ if it generates $P$ at $\mu$ almost every $x$.

If $\mu$ generates $P$, then $P$ is supported on $\mathcal{M}^{\square}$ and is $S$-invariant (\cite{hochman2010dynamics}, Theorem 1.7). We say that $P$ is trivial if it is the distribution supported on $\delta_0 \in \mathcal{M}^{\square}$ - a fixed point of $S$. To an $S$-invariant distribution $P$ we associate its pure point spectrum $\Sigma(P,S)$. This set consists of all the $\alpha \in \mathbb{R}$ for which there exists  a non-zero measurable function $\phi:\mathcal{M}^{\square} \rightarrow \mathbb{C}$ such that $\phi\circ S_t = \exp(2 \pi i \alpha t)\phi$ for every $t\geq 0$, on a set of full $P$ measure. The existence of such an eigenfunction indicates that some non-trivial feature of the measures of $P$ repeats periodically under magnification by $e^\alpha$.

Finally, we say that a measure $\mu \in \mathcal{P}([0,1])$  is pointwise generic under $T_n$ for a  measure $\rho \in \mathcal{P}([0,1])$ if  $\mu$ almost every $x$ equidistributes for $\rho$ under $T_n$, that is,
\begin{equation*}
\frac{1}{N} \sum_{k=0} ^{N-1} f(T^k _n x) \rightarrow \int f(x) d\rho(x),\quad \forall f\in C([0,1]).
\end{equation*}
We are now  ready to state our second main result, which is the technical tool that shall be employed to prove Theorem \ref{Main Theorem}.
\begin{theorem} \label{Conjecture}
Let $\mu \in \mathcal{P}([0,1])$ and let $m>n>1$ be integers, such that:
\begin{enumerate}
\item The measure $\mu$ generates a non-trivial $S$-ergodic distribution $P\in \mathcal{P}(\mathcal{P}([-1,1]))$.

\item The pure point spectrum $\Sigma(P,S)$ does not contain a non-zero integer multiple of $\frac{1}{\log m}$.

\item  The measure $\mu$ is pointwise generic under $T_n$ for an ergodic and continuous measure  $\rho$.
\end{enumerate}
Then 
\begin{equation} \label{Eq I1}
\frac{1}{N} \sum_{i=0} ^{N-1} \delta_{(T_m ^i (x) , T_n ^i (x))} \rightarrow \lambda\times \rho, \quad \text{ for } \mu \text{ almost every } x.
\end{equation}
\end{theorem}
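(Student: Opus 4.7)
The plan is to follow the scenery-based strategy of Hochman and Shmerkin \cite{hochmanshmerkin2015}, adapted to accommodate the additional $T_n$-coordinate. First, by weak-$*$ density of trigonometric polynomials in $C(\mathbb{T}^2)$, the convergence \eqref{Eq I1} reduces to testing against products $(y,z) \mapsto e^{2\pi i k y}\, g(z)$ with $k \in \mathbb{Z}$ and $g \in C(\mathbb{T})$. The case $k = 0$ is immediate from hypothesis (3). For $k \neq 0$, I will Fourier-expand $g$ (uniformly for Lipschitz $g$, then extended by density) and further reduce the claim to showing that, for $\mu$-a.e.\ $x$ and each $k \in \mathbb{Z}\setminus\{0\}$, $l \in \mathbb{Z}$,
\begin{equation*}
\frac{1}{N}\sum_{i=0}^{N-1} e^{2\pi i (k m^i + l n^i) x} \longrightarrow 0.
\end{equation*}

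Squaring and integrating against $\mu$, the $L^2$ version of this statement is equivalent to averaged Fourier decay of $\mu$ along the two-parameter family of frequencies $\xi_{i,j} := k(m^i - m^j) + l(n^i - n^j)$. For $i > j$ with $i - j$ large, $\xi_{i,j} = k m^i (1 + o(1))$, so this is a low-order perturbation of the pure geometric family $\{k(m^i - m^j)\}$ for which Hochman and Shmerkin already establish the required decay. The key input I will invoke from \cite{hochmanshmerkin2015} is that, under hypotheses (1) and (2), the scenery-flow characters $\phi_{k'}(\nu) := \widehat{\nu}(k')$ have vanishing Birkhoff averages along the scenery of $\mu$ at $\mu$-a.e.\ $x$, as a consequence of the pointwise ergodic theorem for $(P, S)$ combined with the spectral hypothesis forbidding the eigenvalue $k'/\log m$. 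Transferring this estimate to the perturbed frequency exploits $m > n$: the correction term $l n^i$ is negligible on the scenery scale $m^{-i}$, since on any window of radius $m^{-i}$ the perturbing character $e^{2\pi i l n^i x}$ varies by at most $2\pi |l|(n/m)^i \to 0$, and so behaves as an essentially constant modulation that does not disturb the fine-scale cancellation. A standard Borel-Cantelli argument along a dyadic sub-sequence then upgrades the resulting $L^2$ convergence to almost-sure convergence.

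The main obstacle will be making the \emph{slowly varying perturbation} heuristic of the preceding step rigorous within the scenery framework --- that is, verifying that the composite character $e^{2\pi i(k m^i + l n^i) x}$ does not produce a spurious eigenfunction of the scaling flow $S_t$ at the forbidden frequency $k/\log m$. This relies on the strict inequality $m > n$ (so that the perturbation is of strictly lower order than the scenery scale $m^{-i}$) together with hypothesis (3), which, via pointwise genericity for a continuous $\rho$, precludes arithmetic concentration of the $T_n$-orbit that could interact resonantly with the forbidden spectral value. Once this transferred Fourier decay estimate is justified, the rest of the argument should parallel the template of \cite{hochmanshmerkin2015}.
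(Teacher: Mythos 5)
The proposal contains a genuine gap at its central step, so the argument as written does not go through.

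The key input you invoke --- that ``Hochman and Shmerkin already establish the required decay'' for the exponential sums $\frac{1}{N}\sum_i e^{2\pi i k m^i x}$, i.e.\ averaged Fourier decay of $\mu$ along the frequencies $k(m^i-m^j)$ --- is not a result in \cite{hochmanshmerkin2015}, and is not implied by hypotheses (1) and (2). The Hochman--Shmerkin proof of pointwise $m$-normality is not an $L^2$/exponential-sum argument: it takes a weak-$*$ accumulation point $\nu$ of the orbit averages, uses the scenery-flow machinery to produce an integral representation of $\nu$ over measures drawn from the EFD $P$, deduces that $\dim\tau*\nu=1$ for all $\tau$ with $\dim\tau$ close enough to $1$, and finally shows that the only $T_m$-invariant measure with this convolution property is $\lambda$. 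No quantitative bound on $\hat{\mu}(k m^i)$ is proved or used. In fact the reason the EFD/CP-process technology exists is precisely that direct Fourier decay along $\{km^i\}$ is \emph{not} available for a general positive-entropy $T_p$-invariant measure --- it is available only for special measures (Cassels--Schmidt, Feldman--Smorodinsky) or under the stronger coprimality hypothesis $\gcd(m,p)=1$ (Host's original argument, which is genuinely Fourier-analytic but does not cover $m\not\sim p$). So the Davenport--Erd\H{o}s--LeVeque style $L^2$ plus Borel--Cantelli strategy, while natural, has no valid input to feed on.

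A second, related, problem is the interpretation of the spectral hypothesis. The condition that no nonzero multiple of $\frac{1}{\log m}$ lies in $\Sigma(P,S)$ concerns eigenfunctions $\phi:\mathcal{M}^\square\to\mathbb{C}$ of the scaling flow $S_t$ in the abstract sense of Section \ref{Section scenrey flow}. The map $\nu\mapsto\hat{\nu}(k')$ that you propose to use as a ``scenery-flow character'' is \emph{not} such an eigenfunction: under rescaling by $S_t$ the Fourier coefficient at frequency $k'$ does not transform by a phase, because scaling a measure moves its Fourier mass between frequencies and the restriction-and-renormalization in the definition of $S_t$ further distorts this. So the pointwise ergodic theorem for $(P,S)$ applied to your $\phi_{k'}$ would not give the vanishing Birkhoff averages you want, and the spectral hypothesis does not rule out the behaviour you need it to rule out in your framework. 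By contrast, the paper uses the spectral hypothesis only through Proposition \ref{Prop spec assumption}, which concerns equidistribution of the discrete-time scenery $\{\mu_{x,k\log m}\}_k$ for $P$; the Fourier analysis in the paper (Claim \ref{Claim rigidty}) is applied to the accumulation measure $\nu$ on $\mathbb{T}^2$ and an auxiliary convolution, not to exponential sums, and the decisive estimate is dimensional (via the Ledrappier--Young formula of Theorem \ref{Theorem Led young}), not an $L^2$ Fourier bound.

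Finally, even granting a Fourier decay input, the ``slowly varying perturbation'' step is not justified as stated: the claim that $e^{2\pi i l n^i x}$ varies little on a window of radius $m^{-i}$ shows the factor is locally nearly constant, but $\hat\mu$ at the shifted frequency $k(m^i-m^j)+l(n^i-n^j)$ is a global quantity, and a shift of size $\sim |l|n^i$ (which is unbounded) can change $\hat\mu$ arbitrarily; the local-oscillation observation controls local integrals, not global Fourier coefficients. The paper's actual route --- identify each accumulation point $\nu$, show $P_1\nu=\lambda$ and $P_2\nu=\rho$, establish via the conditional integral representation (Claim \ref{Theorem 5.1}) the convolution-dimension property, and then invoke the rigidity result (Claim \ref{Claim rigidty}) proved through entropy and conditional measures --- avoids all of these issues.
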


Notice that under assumption (3) of Theorem \ref{Conjecture}, the measure $\rho$ is $T_n$ invariant, so its ergodicity is with respect to this map. Theorem \ref{Conjecture} together with the machinery developted by Hochman and Shmerkin in (\cite{hochmanshmerkin2015},  Section 8) imply Theorem \ref{Main Theorem}: this is explained in Section \ref{Section final proof}. 

We end this introduction with a brief overview of the proof of Theorem \ref{Conjecture}. First, we note that if we only assume (1) and (2) in Theorem \ref{Conjecture}, then 
\begin{equation}  \label{Eq HS}
\frac{1}{N} \sum_{i=0} ^{N-1} \delta_{T_m ^i (x)} \rightarrow \lambda, \quad \text{ for } \mu \text{ almost every } x,
\end{equation}
according to the main result of Hochman and Shmerkin \cite{hochmanshmerkin2015}. This is proved by roughly following three steps: first, using the spectral condition,  they show that any accumulation point $\nu$ of measures as in \eqref{Eq HS} can be represented as an integral over measures that are closely related to those drawn according to $P$. They proceed to use this representation to show that
\begin{equation*}
\text{There exists some } \epsilon >0 \text{ such that for any } \tau\in \mathcal{P}([0,1]) \text{ with } \dim \tau \geq \epsilon,  \dim \tau*\nu=1.
\end{equation*}
They conclude by showing that the only $T_m$ invariant  measure $\nu$ satisfying the latter property is the Lebesgue measure. 

Our strategy is to first show that  a $T_m \times T_n$ invariant measure $\nu$, that projects to $\lambda$ and to $\rho$ in the first and second coordinate respectively, must be $\lambda\times \rho$ if it satisfies the following condition:
\begin{equation*}
\text{There exists some } \epsilon>0 \text{ such that for any } \tau\in \mathcal{P}([0,1]) \text{ with } \dim \tau \geq \epsilon,  \dim \tau*P_1\nu_y=1,
\end{equation*}
for $\rho$ almost every $y$, where $\nu_y$ is the conditional measure of $\nu$ on the fiber $\lbrace (x,z): z=y\rbrace$, and $P_1(x,y)=x$.  This is Claim \ref{Claim rigidty} in Section  \ref{Section rig}. Afterwards, we show that this property holds for all the accumulation points of the measures from \eqref{Eq I1}. This is done via a corresponding integral representation, see Claim \ref{Theorem 5.1} in Section \ref{Section rep}.

\textbf{Notation} We shall use the letter $\lambda$ to indicate both the Lebesgue measure on $[0,1]$ and the Lebesgue measure on $\mathbb{T}$.  Which is meant will be clear from context. Also, whenever we have a finite product space, we denote by $P_i$ the projection to the $i$-th coordinate.

\textbf{Acknowledgements} I would like to thank Mike Hochman for suggesting the problem studied in this paper to me, and for his various insightful comments on earlier versions of this manuscript.  I would also like to thank Shai Evra for some helpful conversations related to this paper. 

\section{Preliminaries}
 \subsection{Dimension theory of measures, and their Fourier coefficients} \label{Section dimension}
For a Borel set $A$ in some metric space $X$, we denote by $\dim A$ its Hausdorff dimension, and by $\dim_P A$ its packing dimension (see Falconer's book \cite{falconer1986geometry} for an exposition on these concepts). Now,  let $\mu \in \mathcal{P}(X)$. The (lower) Hausdorff dimension of the measure $\mu$ is defined as
\begin{equation*} 
\dim \mu := \inf \lbrace \dim A:\quad \mu(A)>0,\quad A \text{ is Borel} \rbrace,
\end{equation*}
and the upper Hausdorff dimension of the measure $\mu$ is defined as
\begin{equation*} 
\overline{\dim} \mu := \inf \lbrace \dim A:\quad \mu(A)=1,\quad A \text{ is Borel} \rbrace,
\end{equation*}
The (upper) packing dimension of the measure $\mu$ is defined as
\begin{equation*}
\dim_P \mu = \inf \lbrace \dim_P A : \mu(A)=1,\quad A \text{ is Borel} \rbrace.
\end{equation*}

An alternative characterization of the dimension of $\mu$ that we shall often use is given in terms of their local dimensions:  For every $x\in \supp(\mu)$  we define the   local (pointwise) dimension of $\mu$ at $x$  as
\begin{equation*}
\dim(\mu,x)=\liminf_{r\rightarrow 0} \frac{\log \mu (B(x,r))}{\log r}
\end{equation*}
where $B(x,r)$ denotes the closed ball or radius $r$ about $x$. The Hausdorff dimension of $\mu$ is equal to
\begin{equation} \label{Eq lower dim}
\dim \mu = \text{ess-inf}_{x\sim \mu} \dim(\mu,x),
\end{equation} 
and  the upper Hausdorff dimension of $\mu$ is equal to
\begin{equation} \label{Eq upper dim}
\overline{\dim} \mu = \text{ess-sup}_{x\sim \mu} \dim(\mu,x).
\end{equation} 
see e.g. \cite{falconer1997techniques}. If $\dim (\mu,x)$ exists as a limit at almost every point, and is constant almost surely, we shall say that the measure $\mu$ is exact dimensional. In this case, most metric definitions of the dimension of $\mu$ coincide (e.g. lower and upper Hausdorff dimension and packing dimension).

Let us now collect some known facts regarding dimension theory of measures:
\begin{Proposition} \label{Prop properties of dim}
\begin{enumerate}
\item Let $\mu\in \mathcal{P}(\mathbb{R}^d)$ and suppose that there is a  distribution $Q\in \mathcal{P}(\mathcal{P}(\mathbb{R}^d))$ such that
\begin{equation*}
\mu = \int \nu dQ(\nu).
\end{equation*}
Then 
\begin{equation*}
\dim \mu \geq \text{ess-inf}_{\nu \sim Q} \dim \nu
\end{equation*}
If $\mu=p_1\mu_1+p_2 \mu_2$ where $\mu_{1},\mu_{2}\in \mathcal{P}(\mathbb{R}^d)$ and  $(p_1,p_2)$ is any probability vector, then
\begin{equation*}
\dim \mu = \min \lbrace \dim \mu_1, \dim \mu_2 \rbrace.
\end{equation*}

\item Let $f:X\rightarrow Y$ be a Lipschitz map between complete metric spaces. Then for any $\mu\in \mathcal{P}(X)$,
\begin{equation*}
\dim f\mu \leq \dim \mu
\end{equation*}
with an equality if $f$ is locally bi-Lipschitz.

\item Let $\mu \in \mathcal{P}(\mathbb{T})$ be exact dimensional, and $\nu \in \mathcal{P}(\mathbb{T})$ be a measure supported on finitely many atoms. Then $\dim \mu*\nu = \dim \mu$, and moreover, $\mu*\nu$ is exact dimensional.
\end{enumerate}
\end{Proposition}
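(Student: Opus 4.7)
My plan is to derive all three statements from the set-theoretic characterizations $\dim \mu = \inf\{\dim A : \mu(A) > 0\}$ and $\overline{\dim}\mu = \inf\{\dim A : \mu(A) = 1\}$, combined with the fundamental bound $\dim A \geq \dim \nu$ whenever $\nu(A) > 0$, which is immediate from these definitions.

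For item (1)(a), set $d_0 := \text{ess-inf}_{\nu \sim Q} \dim \nu$ and take any Borel $A$ with $\mu(A) > 0$. Then $\int \nu(A)\,dQ(\nu) > 0$, so some $\nu$ in the $Q$-full-measure event $\{\dim \nu \geq d_0\}$ has $\nu(A) > 0$, forcing $\dim A \geq d_0$. Taking the infimum over $A$ gives $\dim \mu \geq d_0$. Item (1)(b) then follows: $\geq$ is a special case, and $\leq$ comes from the elementary fact that $\mu \geq p_i \mu_i$, so every $A$ with $\mu_i(A) > 0$ also has $\mu(A) > 0$, giving $\dim \mu \leq \dim \mu_i$ for each $i$.

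For item (2), invoke the standard covering-argument fact that $L$-Lipschitz maps satisfy $\dim f(A) \leq \dim A$. Given Borel $A$ with $\mu(A) > 0$, observe $f\mu(f(A)) \geq \mu(A) > 0$, so $\dim f\mu \leq \dim f(A) \leq \dim A$; minimizing over $A$ gives $\dim f\mu \leq \dim \mu$. In the locally bi-Lipschitz case, partition $X$ into a countable Borel decomposition on each piece of which $f$ is bi-Lipschitz, apply the inequality in both directions, and recombine using the countable version of (1)(b).

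For item (3), write $\nu = \sum_{i=1}^k p_i \delta_{a_i}$, so $\mu * \nu = \sum_i p_i \tau_{a_i}\mu$, where $\tau_{a_i}\mu$ denotes the translate of $\mu$ by $a_i$. Each $\tau_{a_i}\mu$ is isometric to $\mu$, hence exact dimensional with dimension $d := \dim \mu$. The identity $\dim(\mu * \nu) = d$ is immediate from item (1). For exact dimensionality, let
\begin{equation*}
G_i := \left\{ x : \lim_{r \to 0} \frac{\log \tau_{a_i}\mu(B(x,r))}{\log r} = d \right\},
\end{equation*}
so $\tau_{a_i}\mu(G_i) = 1$ for each $i$, and $G := \bigcup_i G_i$ satisfies $(\mu*\nu)(G) = 1$. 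For any $x \in G$, choose $i$ with $x \in G_i$; the inequality $(\mu*\nu)(B(x,r)) \geq p_i \tau_{a_i}\mu(B(x,r))$ together with $\log p_i / \log r \to 0$ gives $\limsup_{r \to 0} \log(\mu*\nu)(B(x,r))/\log r \leq d$. On the other hand, $\dim(\mu*\nu) = d$ forces the lower local dimension of $\mu*\nu$ to be at least $d$ at $(\mu*\nu)$-a.e.\ $x$. Sandwiching these two bounds yields $\lim_{r \to 0} \log(\mu*\nu)(B(x,r))/\log r = d$ almost surely, establishing exact dimensionality.

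The main obstacle is item (3): the subtle bookkeeping rests on the observation that the limsup bound on the local dimension at a typical point can be witnessed by a \emph{single} translate $\tau_{a_i}\mu$, which bypasses any potential issue coming from overlapping supports of the various $\tau_{a_j}\mu$; if one tried to control the sum by the ensemble of all translates simultaneously, translating a $\mu$-full set by $a_j - a_i$ need not preserve its $\mu$-full-measure status and the argument would break down.
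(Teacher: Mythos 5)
The paper states this Proposition as a collection of known facts and offers no proof, so there is nothing in the paper to compare against; I can only assess your argument on its own merits, and it is essentially correct. Items (1) and (2) are the standard arguments from the set-theoretic characterization $\dim\mu = \inf\{\dim A : \mu(A)>0\}$, and item (3) is a correct and fairly clean way to get exact dimensionality of $\mu*\nu$: the pointwise upper bound on local dimension comes from a single translate, and the matching lower bound comes from $\dim(\mu*\nu)=d$ via the essential-infimum characterization of Hausdorff dimension of measures.

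Two small caveats worth flagging. First, in (1)(b) the equality $\dim\mu=\min\{\dim\mu_1,\dim\mu_2\}$ genuinely requires $p_1,p_2>0$ (with, say, $p_1=0$ and $\dim\mu_1<\dim\mu_2$ the claimed equality fails); your $\le$ direction uses $p_i>0$ implicitly, and this is really a defect of the statement's "any probability vector" phrasing rather than of your proof. Second, in (2) the extraction of a countable Borel decomposition on which $f$ is bi-Lipschitz needs $X$ to be separable (so that the cover by bi-Lipschitz neighborhoods has a countable subcover); this is automatic in the Euclidean and toral settings where the proposition is applied in the paper, but does not follow from completeness alone as your phrasing suggests. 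You also silently use a countable version of (1)(b) in step (2); the extension from two summands to countably many is immediate by the same two-sided argument, but it would be cleaner to state it.
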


The next Lemma is essentially Lemma 3.5 in \cite{hochmanshmerkin2015}, with a minor modification  which follows e.g. from Lemma 6.13 in \cite{hochman2010dynamics}.
\begin{Lemma} (\cite{hochmanshmerkin2015}, Lemma 3.5) \label{Lemma 3.5}
Let $\mu\in \mathcal{P}(\mathbb{R}^2)$.
\begin{enumerate}
\item Suppose that for $P_2 \mu$ almost every $y$, $\dim \mu_y \geq \alpha$ (where $\mu_y$ is the conditional measure on the fiber $\lbrace (x,z):z=y\rbrace$). Then $\dim \mu \geq \dim P_2 \mu + \alpha$.  

\item For an upper bound, we have $\dim \mu \leq \dim_P P_1 \mu + \dim P_2 \mu$.
\end{enumerate}
\end{Lemma}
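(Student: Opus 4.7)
My plan is to prove both parts via the local dimension characterizations \eqref{Eq lower dim} and \eqref{Eq upper dim}, combined with the fiber disintegration $\mu = \int \mu_z \otimes \delta_z \, dP_2\mu(z)$.

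For part (1), I would fix $\varepsilon > 0$ and aim to show that for $\mu$-a.e.~$(x,y)$,
$$\liminf_{r \to 0} \frac{\log \mu(B((x,y), r))}{\log r} \geq \alpha + \dim P_2 \mu - 3\varepsilon,$$
which by \eqref{Eq lower dim} and arbitrariness of $\varepsilon$ yields $\dim \mu \geq \alpha + \dim P_2 \mu$. The hypothesis together with \eqref{Eq lower dim} applied to the fiber measures $\mu_y$, and to $P_2 \mu$, combined with Egorov's theorem, produces a Borel set $E \subseteq \mathbb{R}^2$ with $\mu(E) > 1 - \varepsilon$ and a uniform scale $r_0 > 0$ such that for every $(x,y) \in E$ and every $r < r_0$ one has $\mu_y([x-r, x+r]) \leq r^{\alpha - \varepsilon}$ and $P_2\mu([y-r, y+r]) \leq r^{\dim P_2\mu - \varepsilon}$. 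Using the disintegration and Fubini,
$$\mu(B((x,y), r)) \leq \mu([x-r,x+r] \times [y-r,y+r]) = \int_{y-r}^{y+r} \mu_z([x-r, x+r])\, dP_2\mu(z),$$
which I would split according to whether $(x, z) \in E$ or not. The "good" piece is bounded by $r^{\alpha - \varepsilon} \cdot r^{\dim P_2 \mu - \varepsilon}$, already giving the target exponent.

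The main obstacle is the "bad" piece coming from $z \in [y-r, y+r]$ with $(x, z) \notin E$: the fiber-wise nature of the bound does not a priori control $\mu_z([x-r,x+r])$ when $z$ is near $y$ but differs from $y$. To address this I would refine $E$ by a martingale argument applied to the conditional probability that $(x, z) \in E$ given $(x, y)$; since this conditional equals $1$ at $\mu$-a.e.~$(x, y) \in E$, Lebesgue differentiation along the $P_2\mu$ vertical fiber forces the $P_2\mu$-mass of $\{z \in [y-r, y+r] : (x, z) \notin E\}$ to be an $o(1)$ fraction of $P_2\mu([y-r, y+r])$ for $\mu$-typical $(x, y) \in E$. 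Trivially bounding $\mu_z([x-r, x+r]) \leq 1$ on the bad piece then absorbs it into the error, and letting $\varepsilon \downarrow 0$ concludes part (1).

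For part (2), the plan is routine and uses only the set-theoretic definitions of the dimensions of measures. Given $\varepsilon > 0$, choose Borel sets $A_1, A_2 \subseteq \mathbb{R}$ with $P_1\mu(A_1) = P_2\mu(A_2) = 1$, $\dim_P A_1 \leq \dim_P P_1\mu + \varepsilon$, and $\dim A_2 \leq \dim P_2\mu + \varepsilon$, which exist by the definitions of $\dim_P$ and $\dim$ of a measure. Since $\mu(A_1 \times A_2) = 1$, applying Tricot's classical product inequality $\dim(A \times B) \leq \dim_P A + \dim B$ (see Falconer's book) gives
$$\dim \mu \leq \dim(A_1 \times A_2) \leq \dim_P A_1 + \dim A_2 \leq \dim_P P_1\mu + \dim P_2\mu + 2\varepsilon.$$
Sending $\varepsilon \downarrow 0$ completes part (2). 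The heart of the argument is thus part (1), and within it the vertical-neighborhood control of the bad set, which is where the minor modification of \cite{hochmanshmerkin2015}, Lemma 3.5 enters.
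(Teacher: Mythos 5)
The paper does not prove this lemma; it cites it from Hochman--Shmerkin (Lemma 3.5) with a modification from Hochman's \emph{Dynamics on fractals and fractal distributions} (Lemma 6.13), so there is no in-paper proof to compare against. Judged on its own terms, your proposal contains one minor slip and one genuine gap.

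The slip is in part (2). Since $\dim P_2\mu$ is the \emph{lower} Hausdorff dimension $\inf\{\dim A : P_2\mu(A)>0\}$, you cannot in general find a Borel set $A_2$ with $P_2\mu(A_2)=1$ and $\dim A_2$ close to $\dim P_2\mu$; for example if $P_2\mu = \tfrac12\delta_0+\tfrac12\lambda$ then $\dim P_2\mu=0$ but every full-measure set has dimension $1$. You should instead take $A_2$ with $P_2\mu(A_2)>0$, which the definition does allow. Since $P_1\mu(A_1)=1$ forces $\mu((A_1\times\mathbb{R})^c)=0$, one still gets $\mu(A_1\times A_2)=P_2\mu(A_2)>0$, and the rest of the argument via Tricot's product inequality goes through unchanged. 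So part (2) is fine once corrected.

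The gap is in part (1), precisely at the step you flag as "the heart of the argument." You bound
\begin{equation*}
\mu\big(B((x,y),r)\big)\ \leq\ \int_{[y-r,y+r]}\mu_z\big([x-r,x+r]\big)\,dP_2\mu(z),
\end{equation*}
and on the bad piece $\{z\in[y-r,y+r]:(x,z)\notin E\}$ you trivially bound $\mu_z\leq1$, so you need
\begin{equation*}
P_2\mu\big(\{z\in[y-r,y+r]:(x,z)\notin E\}\big)\ =\ o\big(P_2\mu([y-r,y+r])\big)
\end{equation*}
for $\mu$-typical $(x,y)\in E$. Your proposed justification is Lebesgue differentiation of $z\mapsto\mathbf{1}_E(x,z)$ along the vertical fiber with respect to $P_2\mu$. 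But the density theorem for $P_2\mu$ only yields this for $P_2\mu$-a.e.\ $y$ in the slice $E^x=\{z:(x,z)\in E\}$ (for each fixed $x$), whereas you need it for $\mu$-a.e.\ $(x,y)\in E$, i.e.\ for $\mu^x$-a.e.\ $y$ where $\mu^x$ is the conditional of $\mu$ on the fiber $\{x\}\times\mathbb{R}$. There is no absolute continuity $\mu^x\ll P_2\mu$ in general; the two can even be mutually singular (e.g.\ $\mu$ supported on a graph, $\mu^x$ a point mass while $P_2\mu$ is continuous). So the "$P_2\mu$-a.e.\ $y$" conclusion from density differentiation does not transfer to the "$\mu$-typical $(x,y)$" statement you need. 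Likewise, the martingale you gesture at (conditioning on the $x$-coordinate and a dyadic partition of $y$) converges with respect to $\mu^x$-averages, not $P_2\mu$-averages, so it controls a different ratio than the one appearing in your integral bound. Closing this gap requires a different decomposition -- for instance working with the density of $\mu$ in the product structure more carefully, or a Besicovitch-type covering argument that avoids pushing the exceptional set onto the base measure -- and is precisely the nontrivial content of Hochman's Lemma 6.13 that the paper relies on.
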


We end this section with a brief discussion of the Fourier coefficients of measures on $\mathbb{T}^d$. These are defined as follows. First, given $\mu \in \mathcal{P}(\mathbb{T}^d)$ we define for any $k \in \mathbb{Z}^d$ the corresponding Fourier coefficient by
\begin{equation*}
\hat{\mu}(k):=\int e^{2 \pi i k\cdot x} d \mu (x).
\end{equation*}
The following relations are easily verified for two measures $\mu,\nu\in \mathcal{P}(\mathbb{T})$:
\begin{equation} \label{Eq fourier conv}
\widehat{\mu*\nu}(k) = \hat{\mu}(k)\cdot \hat{\nu}(k), \quad  k\in \mathbb{Z}.
\end{equation}
\begin{equation} \label{Eq fouier prod}
\widehat{\mu \times \nu}(k,j) =  \hat{\mu}(k)\cdot \hat{\nu}(j), \quad  (k,j)\in \mathbb{Z}^2.
\end{equation}

The following Lemma is standard:
\begin{Lemma} (\cite{Mattila2015new}, Section 3.10 ) \label{Lemma fou}
Let $\mu,\nu\in \mathcal{P}(\mathbb{T}^2)$. If $\hat{\mu}(k)=\hat{\nu}(k)$ for all $k\in \mathbb{Z}^2$ then $\mu=\nu$.
\end{Lemma}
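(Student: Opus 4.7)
The plan is to reduce the equality of measures to equality of their integrals against continuous functions, and then invoke the Riesz representation theorem. Concretely, I would show that $\int f\,d\mu = \int f\,d\nu$ for every $f\in C(\mathbb{T}^2,\mathbb{C})$; since regular Borel probability measures on the compact metric space $\mathbb{T}^2$ are uniquely determined by the positive linear functional they induce on $C(\mathbb{T}^2)$, this will give $\mu=\nu$.

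First, by linearity of the integral, the hypothesis $\hat\mu(k)=\hat\nu(k)$ for every $k\in\mathbb{Z}^2$ immediately yields $\int P\,d\mu = \int P\,d\nu$ for every trigonometric polynomial $P(x,y)=\sum_{k\in F} c_k\, e^{2\pi i\, k\cdot(x,y)}$, where $F\subset\mathbb{Z}^2$ is finite and $c_k\in\mathbb{C}$. The key step is to pass from trigonometric polynomials to arbitrary continuous functions. For this I would apply the complex Stone--Weierstrass theorem: the algebra $\mathcal{A}$ of trigonometric polynomials on $\mathbb{T}^2$ is a unital subalgebra of $C(\mathbb{T}^2,\mathbb{C})$, it separates points of $\mathbb{T}^2$, and it is closed under complex conjugation because $\overline{e^{2\pi i\, k\cdot x}} = e^{2\pi i\,(-k)\cdot x}\in\mathcal{A}$. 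Hence $\mathcal{A}$ is uniformly dense in $C(\mathbb{T}^2,\mathbb{C})$. Given $f\in C(\mathbb{T}^2,\mathbb{C})$ and $\varepsilon>0$, choosing a trigonometric polynomial $P$ with $\|f-P\|_\infty<\varepsilon$ and using that $\mu$ and $\nu$ are probability measures gives
$$
\left|\int f\,d\mu - \int f\,d\nu\right| \leq \int |f-P|\,d\mu + \int |f-P|\,d\nu \leq 2\varepsilon.
$$
Letting $\varepsilon\to 0$ yields $\int f\,d\mu=\int f\,d\nu$ for all $f\in C(\mathbb{T}^2,\mathbb{C})$, and the Riesz representation theorem then forces $\mu=\nu$.

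There is really no obstacle here: the statement is the standard fact that the Fourier transform is injective on finite Borel measures on the torus, and the argument is the two-dimensional analogue of the classical one-dimensional proof. The only mildly substantive ingredient is the verification of the hypotheses of Stone--Weierstrass for the algebra of trigonometric polynomials, and this is routine.
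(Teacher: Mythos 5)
Your proof is correct and is the standard Stone--Weierstrass plus Riesz representation argument for injectivity of the Fourier transform on a compact abelian group. The paper itself gives no proof of this lemma---it simply cites Mattila's book---so there is no internal argument to compare against, but your route is exactly the expected one.
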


Finally, let $m\geq 2$ and let $\mu$ be the Cantor-Lebesgue measure corresponding to the non-degenerate probability vector $(p_0,...,p_{m-1})$. That is, $\mu$ is the distribution of the Random sum $\sum_{k=1} ^\infty \frac{X_k}{m^k}$, where $X_k$ are IID random variables with $P(X_k = i) = p_i$. It is a well known fact that for every $k\in \mathbb{Z}$,
\begin{equation} \label{Eq fouier cantor leb}
\hat{\mu} (k) = \prod_{j=1} ^\infty \left( \sum_{u=0} ^{m-1} p_u \exp(2 \pi i u \frac{k}{m^j} ) \right).
\end{equation}
 
 \subsection{Dimension theory of invariant measures} \label{Section dim of erg}
\subsubsection{Some notions from ergodic theory} 
In this paper, a dynamical system is a quadruple $(X, \mathcal{B},T,\mu)$, where $X$ is a compact metric space, $\mathcal{B}$ is the Borel sigma algebra, and $T:X\rightarrow X$ is a  measure preserving map, i.e. $T$ is Borel measurable and $T\mu = \mu$. Since we always work with the Borel sigma-algebra, we shall usually just write $(X,T,\mu)$. For example one may consider $X=\mathbb{T}$, the Borel map $T_p$ for some $p\geq 2$, and  some Cantor-Lebesgue measure with respect to base $p$.

A dynamical system is ergodic if and only if the only invariant sets are trivial. That is, if $B\in \mathcal{B}$ satisfies $T^{-1} (B) = B$ then $\mu(B)=0$ or $\mu(B)=1$. A dynamical system is called weakly mixing if for any ergodic dynamical system $(Y,S,\nu)$, the product system $(X\times Y, T\times S, \mu\times \nu)$ is also ergodic. In particular, weakly mixing systems are ergodic. Moreover, If both $(X,T,\mu)$ and $(Y,S,\nu)$ are weakly mixing, then their product system is also weakly mixing. A class of examples of  weakly mixing systems is given $(\mathbb{T},T_p,\mu)$ where $\mu$ is a Cantor-Lebesgue measure with respect to base $p$.

We will have occasion to use  the ergodic decomposition Theorem: Let $(X,T,\mu)$ be a dynamical system. Then there is a  map $X\rightarrow \mathcal{P}(X)$, denoted by $\mu \mapsto \mu^{x}$, such that:
\begin{enumerate}
\item The map $x\mapsto \mu^x$ is measurable with respect to the sub-sigma algebra $\mathcal{I}$ of $T$ invariant sets.

\item $\mu = \int \mu^x d\mu(x)$

\item For $\mu$ almost every $x$, $\mu^x$ is $T$ invariant and ergodic. The measure $\mu^x$ is called the ergodic component of $x$.
\end{enumerate}

Finally, we shall say that a point $x\in X$ is generic with respect to $\mu$ if
\begin{equation*}
\frac{1}{N} \sum_{i=0} ^{N-1} \delta_{T^i x} \rightarrow \mu,\quad \text{ where } \delta_y \text{ is the dirac measure on } y\in X,
\end{equation*}
in the weak-* topology. By the ergodic Theorem, if $\mu$ is ergodic then $\mu$ a.e. $x$ is generic for $\mu$.

\subsubsection{Dimension theory of invariant measures}
Recall that in general, if  $\mu\in \mathcal{P}(X)$ is a $T$ invariant measure, we may define its entropy with respect to $T$, a quantity that we shall denote by $h(\mu,T)$. As there is an abundance of excellent texts on entropy theory (e.g. \cite{Walters1982ergodic}), we omit a discussion on entropy  here. We now restrict our attention to dynamical systems of the form $(\mathbb{T}, \mu, T_p)$ or $(\mathbb{T}^2 ,\mu, T_m \times T_n)$, where we always assume that $m>n>1$. In the one dimensional case, the dimension of $\mu$ may be computed via the entropies of its ergodic components:
\begin{theorem} (\cite{Elon1999conv}, Theorem 9.1) \label{Theorem 9.1}
Let $\mu\in \mathcal{P}(\mathbb{T})$ be a $T_p$ invariant and ergodic measure.  Then $\mu$ is exact dimensional and
\begin{equation*}
\dim \mu = \frac{h(\mu,T_p)}{\log p}.
\end{equation*}
In general, if $\mu\in \mathcal{P}(\mathbb{T})$ is a $T_p$ invariant measure with ergodic decomposition $\mu = \int \mu^x d\mu(x)$, then
\begin{equation} \label{Eq dim of inv measu}
\dim \mu = \text{ess-inf}_{x\sim \mu} \dim \mu^{x}
\end{equation}
and 
\begin{equation} \label{Eq upper dim of inv measu}
\overline{\dim} \mu = \text{ess-sup}_{x\sim \mu} \dim \mu^{x}
\end{equation}
\end{theorem}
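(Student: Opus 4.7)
The plan is to handle the ergodic case first via the Shannon--McMillan--Breiman (SMB) theorem applied to the $p$-adic partition, and then to bootstrap the general formulas from the ergodic decomposition by identifying the local dimension of $\mu$ with that of its ergodic component at $\mu$-typical points.

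\textbf{Ergodic case.} Let $\mathcal{P}=\{[k/p,(k+1)/p):0\leq k<p\}$, which is a one-sided generator for $T_p$. The refined partition $\mathcal{P}_n=\bigvee_{i=0}^{n-1}T_p^{-i}\mathcal{P}$ consists of intervals of length $p^{-n}$, and SMB yields
\[
-\tfrac{1}{n}\log \mu(\mathcal{P}_n(x))\;\longrightarrow\;h(\mu,T_p)\qquad\text{for }\mu\text{-a.e.\ }x.
\]
Because $\mathcal{P}_n(x)\subseteq B(x,p^{-n})$, one immediately obtains the upper bound $\dim(\mu,x)\leq h(\mu,T_p)/\log p$ a.e. The matching lower bound requires showing that $B(x,p^{-n})$ is essentially absorbed by a single $p$-adic atom of comparable scale. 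Noting that $\mu(\partial\mathcal{P})=0$ (positive-entropy ergodic measures are non-atomic, and the trivial case is immediate), Birkhoff's theorem applied to the family $\mathbf{1}_{\{d(\,\cdot\,,\partial\mathcal{P})<\epsilon\}}$ shows that the orbit $\{T_p^n x\}$ spends a vanishing fraction of time $\epsilon$-close to $\partial\mathcal{P}$ as $\epsilon\to 0$; hence along a density-one sequence of scales $B(x,p^{-n})\subseteq \mathcal{P}_n(x)$, forcing $\dim(\mu,x)\geq h(\mu,T_p)/\log p$. The local dimension thus exists as a limit a.e., giving exact-dimensionality, and \eqref{Eq lower dim} then yields $\dim\mu=h(\mu,T_p)/\log p$.

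\textbf{General case.} Write the ergodic decomposition $\mu=\int\mu^y\,d\mu(y)$. By the ergodic case, each $\mu^y$ is exact-dimensional with $\dim\mu^y=h(\mu^y,T_p)/\log p$. Applying SMB to the (possibly non-ergodic) system $(\mathbb{T},T_p,\mu)$ gives $-\tfrac{1}{n}\log\mu(\mathcal{P}_n(x))\to h(\mu^x,T_p)$ for $\mu$-a.e.\ $x$, since SMB's limit function is precisely the conditional entropy over the invariant $\sigma$-algebra. Applying SMB also to each ergodic component $\mu^y$, and using that for $\mu$-a.e.\ $x$ the point $x$ is itself $\mu^x$-typical, we also get $-\tfrac{1}{n}\log\mu^x(\mathcal{P}_n(x))\to h(\mu^x,T_p)$. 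The two exponential rates therefore agree at $x$, and the boundary analysis from the ergodic case, now applied fiberwise to $\mu^x$, upgrades this into the pointwise identification $\dim(\mu,x)=\dim(\mu^x,x)=\dim\mu^x$ for $\mu$-a.e.\ $x$. Substituting into \eqref{Eq lower dim} and \eqref{Eq upper dim} yields
\[
\dim\mu=\text{ess-inf}_{x\sim\mu}\dim\mu^x,\qquad\overline{\dim}\mu=\text{ess-sup}_{x\sim\mu}\dim\mu^x,
\]
as required.

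\textbf{Main obstacle.} The principal technical difficulty is the passage from SMB's control of the $\mu$-measure of $p$-adic atoms to local-dimension statements about balls: the ball $B(x,r)$ may straddle two or three atoms of $\mathcal{P}_n$ whose measures are not directly governed by the forward orbit of $x$. Resolving this via the Birkhoff analysis of how often $T_p^n x$ approaches $\partial\mathcal{P}$, together with the a priori verification that $\mu$ places no mass on this countable boundary, is the heart of the argument and is what propagates through both the ergodic and the non-ergodic parts of the theorem.
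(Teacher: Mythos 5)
This theorem is quoted in the paper from \cite{Elon1999conv} and no proof is given internally, so I am assessing your reconstruction on its own merits. The overall structure (SMB for the $p$-adic generating partition, a boundary/Birkhoff argument to pass from cylinders to balls, then an ergodic-decomposition bootstrap via SMB's almost-sure limit being $h(\mu^x,T_p)$) is the right one, and your identification of the cylinder-to-ball passage as the crux is correct. Two things need repair, though.

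First, a small but real error: the containment $B(x,p^{-n})\subseteq\mathcal{P}_n(x)$ can never hold, since the ball has diameter $2p^{-n}$ while the $p$-adic atom has length $p^{-n}$. What the Birkhoff argument actually yields is $B(x,\epsilon p^{-n})\subseteq\mathcal{P}_n(x)$ for those $n$ with $d(T_p^{n-1}x,\partial\mathcal{P})\geq\epsilon$, equivalently $B(x,p^{-n-k})\subseteq\mathcal{P}_n(x)$ with $k$ depending on $\epsilon$. Second, and more substantively, the inference ``along a density-one sequence of $n$ we have the containment, \emph{forcing} $\dim(\mu,x)\geq h/\log p$'' is a gap: $\dim(\mu,x)$ is a $\liminf$ over \emph{all} radii, and controlling a quantity along a density-one subsequence does not control its $\liminf$ (a function that dips to $0$ on a density-zero set still has $\liminf 0$). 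To close this, one must show that for every large $n$, the nearest ``good'' index $n'\leq n$ (with $T_p^{n'}x\notin U_\epsilon$) satisfies $n'/n\to 1$. This does follow from Birkhoff: if runs of bad indices had length proportional to $n$ infinitely often, the empirical frequency $\frac{1}{N}\#\{j<N:T_p^jx\in U_\epsilon\}$ would exceed the level $\mu(U_\epsilon)$ (which is $<1$ for $\epsilon$ small), a contradiction; then monotonicity of $r\mapsto\mu(B(x,r))$ propagates the cylinder bound at $n'$ down to scale $n$ with a multiplicative loss $n'/n\to 1$. You should spell this out, since as written the step does not follow. The same repair is needed fiberwise in your non-ergodic part, where the relevant Birkhoff average is against $\mu^x(U_\epsilon)$. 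With that inserted the argument is sound; note also that the limsup side (needed for exact-dimensionality, not merely for $\dim(\mu,x)$) is genuinely immediate from $\mathcal{P}_n(x)\subseteq B(x,p^{-n})$ and holds at all scales, so no density issue arises there.
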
  

The situation for dynamical systems of the form $(\mathbb{T}^2, \mu, T_m \times T_n)$ is more complicated. This may be attributed to the fact that the map $T_m \times T_n$ is not conformal. There is, however, a way to compute the dimension of $\mu$ in this situation via entropy theory, using a suitable version of the Ledrappier-Young formula . This was first done  by Kenyon and Peres in \cite{Peres1996measures} for ergodic measures. The general case may be treated using similar methods,  as observed by Meiri and Peres in (\cite{Meiri1998Peres}, Lemma 3.1). 
\begin{theorem} \cite{Meiri1998Peres} \label{Theorem Led young}
Let $\mu\in \mathcal{P}(\mathbb{T}^2)$ be a $T_m \times T_n$ invariant measure. Then for $\mu$ almost every $x$ the local dimension $\dim (\mu,x)$ exists as a limit and
\begin{equation*}
\dim(\mu,x)= \frac{h(\mu^x, T_m\times T_n)-h(( P_2 \mu ) ^{P_2  x}, T_n)}{\log m}+\frac{ h(( P_2 \mu ) ^{P_2  x}, T_n)}{\log n}
\end{equation*}
where $\mu^x$ and $( P_2 \mu ) ^{P_2  x}$ denote the corresponding ergodic components of $\mu$, and of $P_2 \mu$, respectively.
\end{theorem}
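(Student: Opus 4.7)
The plan is to first reduce to the ergodic case and then prove the formula there using symbolic dynamics and Shannon--McMillan--Breiman (SMB). Specifically, consider the ergodic decomposition $\mu = \int \mu^x \, d\mu(x)$ of $\mu$ under $T=T_m\times T_n$. If I can show that $\dim(\mu,x)$ exists $\mu$-a.e.\ and equals $\dim(\mu^x,x)$, then the formula follows by applying the ergodic case to each $\mu^x$ (after verifying that the ergodic decomposition of the projection, $(P_2\mu)^{P_2 x}$, coincides with $P_2(\mu^x)$ up to a null set, or at least that the entropies match).

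For the ergodic case, I would work with the natural product partition: let $\mathcal{P}_k$ be the $m$-adic partition at level $k$ pulled back via $P_1$, and $\mathcal{Q}_\ell$ the $n$-adic partition at level $\ell$ pulled back via $P_2$. The ball $B(x,m^{-k})$ is comparable (up to bounded index) to the rectangle $\mathcal{P}_k(x)\cap \mathcal{Q}_{\ell(k)}(x)$, where $\ell(k)=\lceil k\log m/\log n\rceil$, since both axes have length $\asymp m^{-k}$. So the problem reduces to the asymptotics of $\mu(\mathcal{P}_k(x)\cap\mathcal{Q}_{\ell(k)}(x))$.

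I would split
\begin{equation*}
-\log\mu(\mathcal{P}_k(x)\cap\mathcal{Q}_{\ell(k)}(x)) = -\log\mu(\mathcal{P}_k(x)\cap\mathcal{Q}_k(x)) + \log\frac{\mu(\mathcal{P}_k(x)\cap\mathcal{Q}_k(x))}{\mu(\mathcal{P}_k(x)\cap\mathcal{Q}_{\ell(k)}(x))} .
\end{equation*}
By SMB applied to the refining partitions $\mathcal{P}_k\vee\mathcal{Q}_k$ under $(T,\mu)$, the first summand grows like $k\cdot h(\mu,T)$. For the second, the idea is that on the exponential scale the ratio is asymptotic to $P_2\mu(\mathcal{Q}_k(x))/P_2\mu(\mathcal{Q}_{\ell(k)}(x))$, which by SMB applied to $(T_n,P_2\mu)$ contributes $(\ell(k)-k)\cdot h(P_2\mu,T_n)$. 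Adding these, dividing by $k\log m$, and using $\ell(k)/k\to \log m/\log n$, yields
\begin{equation*}
\dim(\mu,x) = \frac{h(\mu,T) - h(P_2\mu,T_n)}{\log m} + \frac{h(P_2\mu,T_n)}{\log n},
\end{equation*}
as required. Note that the existence of the limit (not just $\liminf$) for the local dimension comes out of SMB giving almost-sure convergence.

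The main obstacle is justifying the asymptotic decoupling
\begin{equation*}
\frac{1}{k}\log\frac{\mu(\mathcal{P}_k(x)\cap\mathcal{Q}_k(x))}{\mu(\mathcal{P}_k(x)\cap\mathcal{Q}_{\ell(k)}(x))} \;\sim\; \frac{1}{k}\log\frac{P_2\mu(\mathcal{Q}_k(x))}{P_2\mu(\mathcal{Q}_{\ell(k)}(x))} .
\end{equation*}
Morally this reflects that conditioning on the first coordinate $\mathcal{P}_k(x)$ does not further refine the statistics of the second coordinate beyond the contribution already encoded in $(P_2\mu,T_n)$; rigorously, one must compare the conditional entropies $H(\mathcal{Q}_{\ell(k)} \mid \mathcal{P}_k\vee\mathcal{Q}_k)$ and $H(\mathcal{Q}_{\ell(k)}\mid\mathcal{Q}_k)$, and show that the difference is $o(k)$ at $\mu$-a.e.\ point. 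This is the heart of the Ledrappier--Young philosophy and is where Rokhlin's formula $h(\mu,T) = h(P_2\mu,T_n) + h(\mu,T\mid P_2^{-1}\mathcal{B})$ enters to identify the fiberwise contribution as $h(\mu,T)-h(P_2\mu,T_n)$. The passage from the ergodic to the non-ergodic case adds the secondary obstacle of showing that balls around $\mu$-typical $x$ are asymptotically dominated by the single ergodic component $\mu^x$ on the exponential scale, which I would handle via a Besicovitch-type density argument applied to the measurable map $x\mapsto \mu^x$.
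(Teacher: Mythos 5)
The paper does not prove this theorem; it cites it as a known result, attributing the ergodic case to Kenyon and Peres \cite{Peres1996measures} and the general case to Meiri and Peres \cite{Meiri1998Peres}. Your sketch follows the same Ledrappier--Young / Kenyon--Peres route that those references use: approximate squares $\mathcal{P}_k\cap\mathcal{Q}_{\ell(k)}$, SMB for the square partition and for the base, and a conditional-entropy decoupling identified through the Abramov--Rokhlin formula. You also correctly spot the two places where real work is needed: (i) the pointwise (not merely averaged) version of the asymptotic
\begin{equation*}
\frac{1}{k}\log\frac{\mu(\mathcal{P}_k(x)\cap\mathcal{Q}_k(x))}{\mu(\mathcal{P}_k(x)\cap\mathcal{Q}_{\ell(k)}(x))}\sim\frac{1}{k}\log\frac{P_2\mu(\mathcal{Q}_k(x))}{P_2\mu(\mathcal{Q}_{\ell(k)}(x))},
\end{equation*}
which at the entropy level follows from $\tfrac{1}{k}H(\mathcal{P}_k\mid\mathcal{Q}_k)\to h(\mu,T)-h(P_2\mu,T_n)$ and $\tfrac{1}{k}H(\mathcal{P}_k\mid\mathcal{Q}_{\ell(k)})\to h(\mu,T\mid P_2^{-1}\mathcal{B})$ via Abramov--Rokhlin, but whose pointwise a.e.\ form is the real Ledrappier--Young content and requires a conditional SMB/martingale argument rather than the heuristic you state; and (ii) the verification that $P_2(\mu^x)=(P_2\mu)^{P_2 x}$ a.e., which does hold (a.e.\ $x$ is generic for $\mu^x$, hence $P_2x$ is generic both for $P_2(\mu^x)$, which is $T_n$-ergodic, and for $(P_2\mu)^{P_2 x}$, so the two measures coincide), and which you should make explicit rather than leaving as a caveat. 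As a sketch your proposal is on the correct track; to be a proof it would need the pointwise decoupling made rigorous along the lines of the cited references.
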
 

Finally, we will require the following result of Meiri, Lindenstrauss and Peres from \cite{Elon1999conv}:
\begin{theorem} \cite{Elon1999conv}  \label{Theorem Elon conv}
Let $\mu\in \mathcal{P}(\mathbb{T})$ be a $T_p$ invariant weakly mixing measure, such that $\dim \mu >0$. Let $\mu^{*k}$ denote the convolution of $\mu$ with itself $k$-times. Then
\begin{equation*}
\dim( \mu^{*k}) \rightarrow 1,\quad \text{ monotonically  as } k\rightarrow \infty 
\end{equation*} 
\end{theorem}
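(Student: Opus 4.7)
The plan is to split the argument into three steps: (i) verify each $\mu^{*k}$ falls within the scope of Theorem \ref{Theorem 9.1} so that its dimension is computed by entropy; (ii) prove the monotonicity $\dim \mu^{*k} \leq \dim \mu^{*(k+1)}$ via a Shannon-entropy inequality for convolutions on $\mathbb{T}$; and (iii) show the monotone limit equals $1$, which I expect to be the main obstacle.

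For step (i), the addition map $s_k:\mathbb{T}^k \to \mathbb{T}$, $(x_1,\ldots,x_k) \mapsto x_1+\cdots+x_k$, satisfies $s_k \circ T_p^{\times k} = T_p \circ s_k$, so $\mu^{*k} = (s_k)_*(\mu^{\times k})$ is $T_p$-invariant. Since $\mu$ is weakly mixing, so is $\mu^{\times k}$ under $T_p^{\times k}$; weak mixing passes to factors, so $\mu^{*k}$ is weakly mixing, and in particular ergodic, under $T_p$. Theorem \ref{Theorem 9.1} then gives $\dim \mu^{*k} = h(\mu^{*k},T_p)/\log p$.

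For step (ii), I would prove the general inequality $h(\nu*\rho, T_p) \geq h(\nu, T_p)$ for any $T_p$-invariant $\nu$ and any $\rho\in\mathcal{P}(\mathbb{T})$. Let $\mathcal{D}_n$ denote the partition of $[0,1)$ into $p$-adic intervals of length $p^{-n}$; since $\bigvee_{j=0}^{n-1} T_p^{-j}\mathcal{D}_1 = \mathcal{D}_n$, we have $h(\nu,T_p) = \lim_n \tfrac{1}{n} H_\nu(\mathcal{D}_n)$ and similarly for $\nu*\rho$. Let $X \sim \nu$ and $Y \sim \rho$ be independent, and let $X_n,(X+Y)_n$ denote the $\mathcal{D}_n$-cells they occupy. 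Given the exact value of $Y$ together with $(X+Y)_n$, the point $X=(X+Y)-Y$ lies in an interval of length $p^{-n}$, which meets at most two cells of $\mathcal{D}_n$; thus $H(X_n \mid (X+Y)_n,Y) \leq \log 2$. By independence and the chain rule,
\[
H_\nu(\mathcal{D}_n) = H(X_n) = H(X_n \mid Y) \leq H((X+Y)_n \mid Y) + \log 2 \leq H_{\nu*\rho}(\mathcal{D}_n) + \log 2.
\]
Dividing by $n$ and letting $n \to \infty$ yields $h(\nu,T_p) \leq h(\nu*\rho,T_p)$; applied with $\nu = \mu^{*k}$ and $\rho = \mu$, this gives the required monotonicity.

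The main obstacle is step (iii). The monotone limit $d_\infty := \lim_k \dim \mu^{*k}$ exists in $(\dim\mu, 1]$; assume for contradiction $d_\infty < 1$. The strategy is to upgrade step (ii) to a quantitative entropy increment: find $\varepsilon = \varepsilon(d_\infty,\mu)>0$ with $\dim(\nu*\mu) \geq \dim\nu + \varepsilon$ whenever $\nu$ is $T_p$-invariant ergodic with $\dim \nu \leq d_\infty$, contradicting the stabilisation of $\dim \mu^{*k}$ near $d_\infty$. A natural route is to estimate the mutual-information rate $\tfrac{1}{n}I(Y;(X+Y)_n) = \tfrac{1}{n}\bigl(H((X+Y)_n) - H((X+Y)_n \mid Y)\bigr)$: step (ii) shows this is non-negative and bounds the entropy gain from below. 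Weak mixing of $\mu$ — manifested in the Fourier decay $|\widehat{\mu^{*k}}(\ell)| = |\hat\mu(\ell)|^k \to 0$ for each $\ell \neq 0$ — should preclude the rigid alternative in which $(X+Y)_n$ is asymptotically conditionally independent of $Y$. Converting this spectral non-rigidity into a quantitative lower bound on the mutual-information rate, via Plancherel-type estimates at matched scales, is the technical core of the argument.
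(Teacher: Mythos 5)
The paper does not prove Theorem \ref{Theorem Elon conv}; it is cited from Lindenstrauss--Meiri--Peres \cite{Elon1999conv}, and the remark immediately after the statement says explicitly that only a special case is quoted and refers the reader to that paper for the full result. So there is no in-paper proof to compare against; what remains is to judge whether your reconstruction stands on its own.

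Steps (i) and (ii) are correct. Since $s_k\circ T_p^{\times k}=T_p\circ s_k$, the system $(\mathbb{T},T_p,\mu^{*k})$ is a factor of the weakly mixing $(\mathbb{T}^k,T_p^{\times k},\mu^{\times k})$, hence weakly mixing and in particular ergodic, and Theorem \ref{Theorem 9.1} applies. The entropy inequality in (ii) is sound: $\nu*\rho$ is $T_p$-invariant whenever both $\nu$ and $\rho$ are, and the chain
\[
H(X_n)=H(X_n\mid Y)\le H\bigl((X+Y)_n\mid Y\bigr)+H\bigl(X_n\mid (X+Y)_n,Y\bigr)\le H_{\nu*\rho}(\mathcal{D}_n)+\log 2
\]
gives $h(\nu,T_p)\le h(\nu*\rho,T_p)$ after dividing by $n$ and letting $n\to\infty$. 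Applied with $\nu=\mu^{*k}$, $\rho=\mu$, this yields monotonicity.

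Step (iii) is the actual content of the theorem and is not proved. You correctly identify that one needs a uniform entropy increment away from any hypothetical limit $d_\infty<1$, but the sketch offered does not supply it. The observation $|\widehat{\mu^{*k}}(\ell)|=|\hat\mu(\ell)|^k\to 0$ for each fixed $\ell\neq 0$ holds for every continuous measure and does not encode weak mixing in a form that can be fed into a Plancherel-type bound on the mutual-information rate; in particular, the Fourier coefficients of a $T_p$-ergodic measure need not decay as $\ell\to\infty$, so there is no scale-matched $L^2$ control at resolution $p^{-n}$ to exploit. Moreover, the proof in \cite{Elon1999conv} does not proceed via Fourier analysis at all: it is a symbolic, combinatorial argument about entropy along $p$-adic cylinders (with a reduction to Markov measures), so the program you outline would be a genuinely new proof, and as written it is only a heuristic. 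What you have rigorously established is monotonicity and the bound $\dim\mu^{*k}\ge\dim\mu$; the convergence of the limit to $1$ is still missing.
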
 
We remark that we have only cited a special case of this result. Indeed, Meiri, Lindenstrauss and Peres deal with the growth of the entropy of more general convolutions of $T_p$ ergodic measures. We refer the reader to \cite{Elon1999conv} for the full statement.

\subsection{Relating the distribution of orbits to the measure} \label{Section relating}
Let $X$ be a compact metric space, $T:X\rightarrow X$ a Borel measurable map, and let $\mu,\nu\in \mathcal{P}(X)$. Following Hochman and Shmerkin \cite{hochmanshmerkin2015}, we shall say that $\mu$ is pointwise generic for $\nu$ under $T$ if $\mu$ almost every $x$ equidistributes for $\nu$ under $T$, that is,
\begin{equation*}
\frac{1}{N} \sum_{k=0} ^{N-1} f(T^k x) \rightarrow \int f d\nu,\quad \forall f\in C(X).
\end{equation*}
This notion is closely related to the main results of this paper. Indeed, let $X=\mathbb{T}^2$,  $T=T_m \times T_p$ for $m>p>1$ and $m\not \sim p$, and  $\alpha$ be the pushforward of a $T_p$ invariant ergodic positive dimensional measure $\mu \in \mathcal{P}(\mathbb{T})$ to the diagonal of $\mathbb{T}^2$. Then Theorem \ref{Main Theorem} part (1) for example may be stated as "$\alpha$ is pointwise generic for $\lambda \times \mu$ under $T$".

In \cite{hochmanshmerkin2015}, the authors obtain a criteria for this to occur, one that shall play a central role in this paper as well. We now recall its formulation.  Let $\mathcal{A}$ be a finite partition of $X$, and for every $i\in \mathbb{N}\cup \lbrace 0 \rbrace$ let $T^i \mathcal{A}= \lbrace T^{-i} A: A\in \mathcal{A}\rbrace$. Let $\mathcal{A}^k = \bigvee_{i=0} ^{k-1} T^i \mathcal{A}$ denote the coarsest common refinement of $\mathcal{A}, T^{1} \mathcal{A}...,T^{k-1}\mathcal{A}$.  Now, if the smallest sigma algebra that contains $\mathcal{A}^k$ for all $k$ is the Borel sigma algebra, we say that $\mathcal{A}$ is a generator for $T$. We say that $\mathcal{A}$ is a topological generator if $\sup \lbrace \diam A: A\in \mathcal{A}^k \rbrace \rightarrow 0$ as $k\rightarrow \infty$. A topological generator is clearly a generator.

Let us give two examples of topological generators that shall be used in this paper: for every $p\in \mathbb{N}$ let $\mathcal{D}_{p}$ be the $p$-adic partition of $\mathbb{T}$ (and of $\mathbb{R}$), that is,
\begin{equation*}
\mathcal{D}_p = \lbrace [\frac{z}{p}, \frac{z+1}{p}) :\quad  z\in \mathbb{Z} \rbrace.
\end{equation*}
Then, under the map $T_p$, we see that
\begin{equation*}
\mathcal{D}_p ^k = \mathcal{D}_{p^k} = \lbrace [\frac{z}{p^k}, \frac{z+1}{p^k}) :\quad  z\in \mathbb{Z} \rbrace.
\end{equation*}
It is thus easy to see that $\mathcal{D}_p$ is a generator for $T_p$. Similarly, if $m>n$ then the partition $\mathcal{D}_{m} \times \mathcal{D}_{n}$ of $\mathbb{T}^2$ is a generator under $T_m \times T_n$.

  Finally, in general, for every $k\geq 1$ and $x\in X$, let $\mathcal{A}^k (x)$ denote the unique element of $\mathcal{A}^k$ that contains $x$. Given $\mu \in \mathcal{P}(X)$ and $x\in X$ such that $\mu(\mathcal{A}^k (x))>0$, let
  \begin{equation*}
  \mu_{\mathcal{A}^k (x)} = c\cdot T^k( \mu|_{\mathcal{A}^k (x)}),\quad \text{ where } c=\mu (\mathcal{A}^k (x))^{-1},
  \end{equation*}
which is well defined almost surely.
  
\begin{theorem} (\cite{hochmanshmerkin2015}, Theorem 2.1) \label{Theorem 2.1}
Let $T:X\rightarrow X$ be a Borel measurable map of a compact metric space, $\mu \in \mathcal{P}(X)$ and $\mathcal{A}$ a generating partition. Then for $\mu$ almost every $x$, if $x$ equidistributes for $\nu\in \mathcal{P}(X)$ along some $N_k\rightarrow \infty$, and if $\nu(\partial A)=0$ for all $A\in \mathcal{A}^k,k\in \mathbb{N}$, then
\begin{equation*}
\nu = \lim_{k\rightarrow \infty} \frac{1}{N_k} \sum_{k=0} ^{N_k-1} \mu_{\mathcal{A}^k (x)},\quad \text{weak-* in } \mathcal{P}(X).
\end{equation*}
\end{theorem}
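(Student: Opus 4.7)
The plan is to approximate the orbit empirical measures $\nu_k := \frac{1}{N_k}\sum_{i=0}^{N_k-1}\delta_{T^i x}$, which converge to $\nu$ weak-$*$ by hypothesis, by the cell-renormalized averages $\tilde\nu_k := \frac{1}{N_k}\sum_{i=0}^{N_k-1}\mu_{\mathcal{A}^k(T^i x)}$, and to show both share the same limit. Fix a test function $f \in C(X)$. The key initial observation is that $y \mapsto \mu_{\mathcal{A}^k(y)}$ is constant on atoms of $\mathcal{A}^k$, so
\[
\int f\, d\tilde\nu_k \;=\; \sum_{A \in \mathcal{A}^k}\frac{\#\{0\le i<N_k : T^i x \in A\}}{N_k}\int f\, d\mu_A.
\]
The boundary-free hypothesis $\nu(\partial A)=0$ for every $A \in \mathcal{A}^k$, together with $\nu_k \to \nu$, allows a standard Portmanteau argument applied to the indicators $1_A$ to show the cell-frequencies converge to $\nu(A)$; since $\mathcal{A}^k$ is a finite partition the convergence is uniform across cells, so $\int f\, d\tilde\nu_k$ differs from $\sum_{A\in\mathcal{A}^k}\nu(A)\int f\, d\mu_A$ by an error that vanishes once $N_k$ is large.

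The remaining step is to prove $\sum_{A \in \mathcal{A}^k}\nu(A)\int f\, d\mu_A \to \int f\, d\nu$ as $k \to \infty$. Unwinding $\mu_A = \mu(A)^{-1}T^k(\mu|_A)$, the sum is exactly $\int E_\mu[f\circ T^k \mid \mathcal{A}^k]\, d\nu$. The $T$-invariance of $\nu$, automatic from the equidistribution assumption and continuity of $T$, rewrites the target as $\int f \circ T^k\, d\nu$, reducing matters to showing that the $\nu$-integral of the martingale-type difference $E_\mu[f\circ T^k\mid \mathcal{A}^k] - f\circ T^k$ vanishes as $k \to \infty$. Here I would invoke the generator property of $\mathcal{A}$, so that $\bigvee_k\mathcal{A}^k$ is the Borel $\sigma$-algebra, and combine it with an ergodic-decomposition argument for $\mu$ that exploits the $\mu$-almost-sure compatibility between the conditional structure of $\mu$ and integration against $\nu$. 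A diagonal argument, sending $k \to \infty$ while choosing $N_k$ large enough along the way, then combines the two steps into the full statement.

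The main obstacle is precisely this last identification: the conditional expectation is taken with respect to $\mu$, whereas the outer integral is against $\nu$, and a priori these are distinct measures. The reconciliation rests on the $\mu$-almost-sure nature of the hypothesis, which ties $\nu$ to the cell-level structure of $\mu$; this is transparent in the ergodic case (Birkhoff forces $\nu$ to coincide with $\mu$'s ergodic component at $x$, and the $T$-invariance of $\mu$ then makes the equality hold exactly for every $k$), but requires genuine ergodic-decomposition bookkeeping in general. Tracking how $\mathcal{A}^k$ refines through the decomposition, so that $E_\mu[\cdot\mid \mathcal{A}^k]$ and $\nu$ integrate compatibly in the limit, is where the bulk of the technical work lies.
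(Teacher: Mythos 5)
The first and decisive problem is a misreading of the statement. The paper's display has a typographical collision of indices, but the intended (and subsequently applied --- compare \eqref{Eq I2}) summand is $\mu_{\mathcal{A}^i(x)}$: the measure obtained by conditioning $\mu$ on the level-$i$ cell of the \emph{fixed} base point $x$ and pushing forward by $T^i$, with the depth $i$ itself being the summation index. You instead work with $\tilde\nu_k=\frac{1}{N_k}\sum_{i=0}^{N_k-1}\mu_{\mathcal{A}^k(T^ix)}$, a \emph{fixed} depth $k$ applied at each orbit point $T^ix$. These are genuinely different objects, and yours need not even be defined: $\mu$ is not assumed $T$-invariant, so $T^ix$ can land in a $\mu$-null cell of $\mathcal{A}^k$ (take $T$ the shift on $\{0,1\}^{\mathbb{N}}$ and $\mu$ the law of a sequence whose first symbol is deterministically $0$ and whose remaining symbols are i.i.d.\ fair coin flips), in which case $\mu_{\mathcal{A}^k(T^ix)}$ has no meaning. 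By contrast, $\mu_{\mathcal{A}^i(x)}$ is defined for $\mu$-a.e.\ $x$ and every $i$, which is exactly what the theorem needs. Consequently the regrouping $\int f\,d\tilde\nu_k=\sum_{A\in\mathcal{A}^k}\nu_k(A)\int f\,d\mu_A$ that drives your whole scheme does not apply to the quantity the theorem is about, and the remaining analysis is aimed at the wrong target.

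Even granting your reading, the argument leans on hypotheses the theorem does not grant, and the one step you flag as the ``main obstacle'' is where the proof actually lives. You invoke $T$-invariance of $\nu$ via ``continuity of $T$'', but $T$ is only Borel measurable, so the limit of $\frac{1}{N_k}\sum_{i<N_k}\delta_{T^ix}$ need not be $T$-invariant; later you appeal to ``the $T$-invariance of $\mu$'', which is also not assumed. Finally, the reduction to showing $\int\bigl(E_\mu[f\circ T^k\mid\mathcal{A}^k]-f\circ T^k\bigr)\,d\nu\to 0$ is not a martingale-convergence statement: $f\circ T^k$ moves with $k$ at the same rate $\mathcal{A}^k$ refines, and the integrating measure $\nu$ is a priori singular to $\mu$, so neither the generator property nor ergodic decomposition gives the convergence by itself. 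The mechanism behind the correct statement is rather a $\mu$-almost-sure strong law for the array $W_i(x):=\int f\,d\mu_{\mathcal{A}^i(x)}-f(T^ix)=E_\mu[f\circ T^i\mid\mathcal{A}^i](x)-f(T^ix)$ when $f$ is an $\mathcal{A}^j$-measurable cylinder function: here $E_\mu[W_i\mid\mathcal{A}^i]=0$ and $W_i$ is $\mathcal{A}^{i+j}$-measurable, so splitting the index set into $j$ arithmetic progressions produces bounded martingale-difference sequences and yields $\frac{1}{N}\sum_{i<N}W_i\to 0$ $\mu$-a.s.; one then passes from cylinder functions to $C(X)$ using the generator property together with $\nu(\partial A)=0$. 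This is a qualitatively different route from the one you sketch, and it is the part that cannot be skipped.
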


A crucial ingredient in our application of Theorem \ref{Theorem 2.1} is the following Claim. Let $m>n$, and define for every $k\in \mathbb{N}$
\begin{equation*}
A_k = \lbrace x \in \mathbb{R} :\quad \mathcal{D}_{m^k} (x) \not \subseteq \mathcal{D}_{n^k} (x) \rbrace
\end{equation*}
Also, recall that the density of a sequence $S\subseteq \mathbb{N}$ (if it exists) is the limit of the sequence $\frac{|S\cap [1,N]|}{N}$ as $N\rightarrow \infty$. If the limit does not exist, the corresponding $\limsup$ is called the upper density of $S$.
\begin{Claim} \label{Claim stable intersections}
Suppose that $\mu\in \mathcal{P}([0,1])$ is a measure that is pointwise generic under $T_n$ for a continuous measure $\rho$. Then for $\mu$ almost every $x$, if $x\in \limsup A_k$ and $\lbrace n_k \rbrace$ represents the times when $x\in A_{n_k}$, then the density of $\lbrace n_k \rbrace$ is zero.
\end{Claim}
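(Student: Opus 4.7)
The plan is to reduce membership in $A_k$ to a geometric condition on $T_n^k x$, and then to exploit the assumed equidistribution of the forward orbit for the atomless measure $\rho$ to force the set of visit times $\{k : x \in A_k\}$ to have density zero.

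First I would unpack the definition. Writing $\mathcal{D}_{m^k}(x) = [a, a + 1/m^k)$ and noting that $1/m^k < 1/n^k$, the $m^k$-adic cell of $x$ either lies inside the $n^k$-adic cell containing $x$ or straddles exactly one $n^k$-adic boundary. Hence $x \in A_k$ precisely when some point of $\tfrac{1}{n^k}\mathbb{Z}$ lies in the open interval $(a, a+1/m^k)$, and in that case $x$ itself lies within distance $1/m^k$ of this $n^k$-boundary. Passing to the rescaled coordinate, this says
$$\mathrm{dist}_{\mathbb{T}}(T_n^k x,\,0) \;=\; \min(T_n^k x,\, 1 - T_n^k x) \;<\; (n/m)^k,$$
so $A_k \subseteq \{x : T_n^k x \in U_{(n/m)^k}\}$, where $U_\delta := \{t \in \mathbb{T} : \mathrm{dist}_{\mathbb{T}}(t,0) < \delta\}$.

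Next I fix $\varepsilon > 0$. Since $\rho$ is continuous, $\rho(U_\delta) \downarrow \rho(\{0\}) = 0$ as $\delta \downarrow 0$, so I can pick $\delta > 0$ with $\rho(\overline{U_\delta}) < \varepsilon$ and $\rho(\partial U_\delta) = 0$, and then $K_0$ large enough that $(n/m)^k < \delta$ for all $k \geq K_0$. For every such $k$ the inclusion $A_k \subseteq \{x : T_n^k x \in U_\delta\}$ holds, so the pointwise genericity hypothesis gives, at $\mu$-almost every $x$,
$$\limsup_{N \to \infty} \frac{1}{N}\#\{0 \leq k < N : x \in A_k\} \;\leq\; \lim_{N \to \infty} \frac{1}{N}\sum_{k=0}^{N-1} \mathbf{1}_{U_\delta}(T_n^k x) \;=\; \rho(U_\delta) \;<\; \varepsilon.$$
Intersecting the resulting full-measure sets over a countable sequence $\varepsilon_j \downarrow 0$ then forces the upper density of $\{k : x \in A_k\}$ to be zero for $\mu$-a.e.\ $x$, which is exactly the content of the claim (on the event $x \in \limsup A_k$).

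The one point I expect to watch carefully is the geometric reduction step: ensuring the inclusion $A_k \subseteq \{T_n^k x \in U_{(n/m)^k}\}$ is correct even in degenerate situations when $x$ sits on a common $m^k$/$n^k$-boundary. Such points are $T_n$-preperiodic rationals, hence carry no $\mu$-mass because a pointwise-generic orbit for a continuous measure cannot be finite, so this caveat is cost-free. The rest is a straightforward combination of Birkhoff-type equidistribution along $T_n$ with a portmanteau estimate using the atomlessness of $\rho$.
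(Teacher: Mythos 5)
Your proof is correct and follows essentially the same route as the paper's: both proofs hinge on the geometric observation that $x \in A_k$ forces $T_n^k x$ to lie within distance $(n/m)^k$ of $0$ in $\mathbb{T}$, and then both use continuity of $\rho$ together with the assumed pointwise $T_n$-genericity to bound the upper density of visit times by $\rho$ of a small neighbourhood of $0$. The only cosmetic difference is that you observe $A_k \subseteq \{T_n^k x \in U_\delta\}$ directly for $k \geq K_0$, whereas the paper splits $\{n_k\}$ into the part inside $V_\delta$ (density $< \epsilon$) and the part outside (shown finite by the same $K$-threshold computation); these are the same argument organized slightly differently.
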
 
\begin{proof}
Choose $x\sim \mu$, and if $x\in \limsup A_k$ let $\lbrace n_k \rbrace$ be the sequence as in the statement of the Claim. Let $\epsilon>0$. We will show that the upper density of $\lbrace n_k \rbrace$ is at most $\epsilon$. First, since $\rho$ is a continuous measure, there exists some $\delta>0$ such that $\rho (B(0,\delta) ) <\epsilon$, where $B(0,\delta)$ is the ball about $0$ in $\mathbb{T}$. By our assumption that $\mu$ is pointwise generic under $T_n$ for $\rho$, and since $\rho$ is a continuous measure,
\begin{equation*}
V_\delta = \lbrace i| \quad T_n ^i (x) \in B(0,\delta) \rbrace
\end{equation*}
has density $\rho(B(0,\delta) ) <\epsilon$.

Now, let us decompose our sequence
\begin{equation*}
\lbrace n_k \rbrace = \left( \lbrace n_k \rbrace \cap V_\delta \right) \cup \left( \lbrace n_k \rbrace \cap \left( \mathbb{N} \setminus V_\delta \right) \right). 
\end{equation*}
Then the upper density of $\lbrace n_k \rbrace \cap V_\delta$ is at most $\epsilon$. We now show that the density of the sequence $\lbrace \ell_k \rbrace :=\lbrace n_k \rbrace \cap \left( \mathbb{N} \setminus V_\delta \right)$ is $0$. In fact, we will show that this is a finite sequence.

Indeed, let $K > \frac{\log \delta}{\log \frac{n}{m}}$. We claim that $\lbrace \ell_k \rbrace \subseteq [0, K]$. Assume towards a contradiction that there exists some $q>K$ such that $\ell_k =q$ for some $k$. Then there is a unique $n^q$-adic number $a$ (an endpoint of an $\mathcal{D}_{n^q}$ cell) such that $a\in \mathcal{D}_{m^q} (x)$. Write $a = \frac{s}{n^{q}}$ for some integer $s$. Then we have
\begin{equation*}
|x-\frac{s}{n^{q}}| \leq \frac{1}{m^{q}},
\end{equation*}
which implies that $T_n ^q (x) \in B(0, \frac{n^q}{m^q}) \subset B(0,\delta)$, by the choice of $K$. Thus, $q\in V_\delta$, contradicting the choice of the sequence $\lbrace \ell_k \rbrace$. Thus, $\lbrace \ell_k \rbrace \subseteq [0, K]$, which is sufficient for us.
\end{proof}

We will also require the following Lemma.
\begin{Lemma} \label{Lemma bad indices}
Let $x\in [0,1]$ be such that it equidistributes for a continuous measure $\rho$ under $T_n$. Let $D\subset [0,1]$ be some interval. Let $\lbrace n_k \rbrace$ be the sequence of times when $T_n ^{n_k} x\notin \overline{D}$ but $d(T_n ^{n_k} (x), \partial D) \leq (\frac{n}{m})^{n_k}$. Then the density of the sequence $\lbrace n_k \rbrace$ is $0$.
\end{Lemma}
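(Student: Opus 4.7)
The plan is to show that the times $n_k$ land in an arbitrarily thin neighborhood of the two-point set $\partial D$, and to exploit the continuity of $\rho$ together with the equidistribution hypothesis to conclude that such times form a set of density zero.

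Fix $\varepsilon > 0$. Since $m > n$, the ratio $n/m$ is strictly less than $1$, so there exists $K = K(\varepsilon)$ such that $(n/m)^k < \varepsilon$ for every $k \geq K$. Define the closed neighborhood
\begin{equation*}
W_\varepsilon := \{ y \in [0,1] : d(y,\partial D) \leq \varepsilon \}.
\end{equation*}
By definition of $\{n_k\}$, whenever $n_k \geq K$ we have
\begin{equation*}
d(T_n^{n_k} x, \partial D) \leq (n/m)^{n_k} < \varepsilon,
\end{equation*}
so $T_n^{n_k} x \in W_\varepsilon$. Consequently, up to a finite initial segment (those $n_k < K$), the sequence $\{n_k\}$ is contained in the set of return times $V_\varepsilon := \{ i : T_n^i x \in W_\varepsilon\}$.

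Now I would use the equidistribution assumption to estimate the density of $V_\varepsilon$. The set $W_\varepsilon$ is a finite union of intervals whose boundary consists of at most four points; since $\rho$ is continuous, $\rho(\partial W_\varepsilon) = 0$. Hence the standard application of equidistribution to an indicator whose discontinuity set has $\rho$-measure zero gives
\begin{equation*}
\lim_{N\to\infty} \frac{|V_\varepsilon \cap [1,N]|}{N} = \rho(W_\varepsilon).
\end{equation*}
Combined with the preceding paragraph, this yields
\begin{equation*}
\limsup_{N\to\infty} \frac{|\{n_k\} \cap [1,N]|}{N} \leq \rho(W_\varepsilon).
\end{equation*}

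Finally, since $\partial D$ consists of at most two points and $\rho$ is continuous, $\rho(\partial D) = 0$, and by the downward continuity of measure $\rho(W_\varepsilon) \downarrow 0$ as $\varepsilon \downarrow 0$. Letting $\varepsilon \to 0$ in the bound above shows that the upper density of $\{n_k\}$ is zero, which is exactly the claim. The argument contains no real obstacle once one notices the shape of the proof of the preceding Claim \ref{Claim stable intersections}; the only point that requires a small amount of care is choosing $W_\varepsilon$ so that its boundary has $\rho$-measure zero, which is automatic here because $\partial W_\varepsilon$ is finite and $\rho$ is continuous.
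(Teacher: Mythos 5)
Your proof is correct and follows essentially the same route as the paper's: both shrink a closed neighborhood $W_\varepsilon$ of $\partial D$, note that the tail of $\{n_k\}$ lands in $W_\varepsilon$ because $(n/m)^{n_k}\to 0$, bound the upper density by $\rho(W_\varepsilon)$ via equidistribution, and let the neighborhood shrink to the $\rho$-null set $\partial D$. Your explicit observation that $\rho(\partial W_\varepsilon)=0$ (so the Portmanteau-type limit applies) is a small extra bit of rigor that the paper leaves implicit, but the argument is otherwise the same.
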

\begin{proof}
Let $\epsilon>0$. Since $\rho$ is continuous, there exists some $\delta>0$ such that 
\begin{equation*}
\rho (\lbrace y:\quad d(y, \partial D) \leq \delta \rbrace ) < \epsilon.
\end{equation*}
Let
\begin{equation*}
V_\delta = \lbrace k|\quad T_n ^k x \in \lbrace y:\quad d(y, \partial D) \leq \delta \rbrace \quad  \rbrace.
\end{equation*}
Then by our assumption on $x$, the density of $V_\delta$ is at most $\epsilon$. However, the sequence $\lbrace n_k \rbrace \subseteq V_\delta$, apart from maybe finitely many indices. It follows that the upper density of $\lbrace n_k \rbrace$ is at most the density of $V_\delta$, and therefore is at most $\epsilon$. This proves the Lemma. 
\end{proof}
\subsection{Ergodic fractal distributions} \label{Section EFD}
Recall the definitions introduced in Section \ref{Section scenrey flow}. In this Section we discuss some other related results of \cite{hochmanshmerkin2015} that we shall require.  First, we cite a result about the implication of not having some element $t_0>0$ in the pure point spectrum of a distribution generated by a measure. 
\begin{Proposition} \label{Prop spec assumption} (\cite{hochmanshmerkin2015}, Section 4) Suppose that $\mu$ generates an $S$-ergodic distribution $P$ and that no non-zero integer multiple of $t_0>0$ is in $\Sigma(P,S)$. Then $P$ is $t_0$-generated by $\mu$ at almost every $x$, i.e. the sequence $\lbrace \mu_{x,kt_0} \rbrace_{k=0} ^\infty$ equidistributes for $P$. 
\end{Proposition}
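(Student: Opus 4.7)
The plan is to reduce the proposition to two inputs: (i) $P$-ergodicity of the time-$t_0$ discretization $\bar S := S_{t_0}$ of the scaling flow, forced by the spectral hypothesis, and (ii) a Ces\`aro-smoothing identity that transfers continuous-time generation at $\mu^x$ to discrete-time averages of a restricted family of test functions. A concluding spectral-uniqueness argument then upgrades this restricted convergence to full equidistribution of the discrete orbit $\{\mu_{x, k t_0}\}_{k \ge 0}$.

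First, I would establish that $\bar S$ is $P$-ergodic. By the spectral theorem for the one-parameter group induced by $S$ on $L^2(P)$, every $\bar S$-invariant $\phi \in L^2(P)$ lies in the closed span of $S$-eigenfunctions whose eigenvalues $\alpha$ satisfy $e^{2 \pi i \alpha t_0} = 1$, i.e.\ $\alpha \in t_0^{-1}\mathbb{Z}$. The spectral hypothesis excludes every nonzero such $\alpha$ from $\Sigma(P, S)$, and $S$-ergodicity of $P$ forces the $\alpha = 0$ eigenspace to consist of constants, so $\phi$ is $P$-a.s.\ constant.

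Next, fix a continuous $F$ on $\mathcal{M}^\square$ and an $x$ at which $\mu$ generates $P$. Splitting $[0, N t_0]$ into $N$ blocks of length $t_0$ and using the flow semigroup property gives
\[
\frac{1}{N t_0}\int_0^{N t_0} F(S_t \mu^x)\, dt \;=\; \frac{1}{N}\sum_{k=0}^{N-1}(T F)(S_{k t_0}\mu^x), \qquad (TF)(\nu) := \frac{1}{t_0}\int_0^{t_0} F(S_s \nu)\, ds.
\]
By continuous-time generation and the $S$-invariance of $P$ (which gives $\int TF\, dP = \int F\, dP$), the left-hand side tends to $\int F\, dP$ as $N \to \infty$, and hence so does the right-hand side, for $\mu$-a.e.\ $x$. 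Consequently, any weak-$*$ limit $P'$ of the discrete empirical measures $P^x_N := N^{-1}\sum_{k=0}^{N-1}\delta_{S_{k t_0}\mu^x}$ is $\bar S$-invariant and satisfies $\int TF\, dP' = \int TF\, dP$ for every continuous $F$.

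Finally, I would conclude $P' = P$ via spectral uniqueness. The smoothing operator $T$ acts on each flow-eigencomponent of $L^2(P)$ by multiplication by $c_\alpha := (e^{2 \pi i \alpha t_0} - 1)/(2 \pi i \alpha t_0)$ for $\alpha \ne 0$, and by $1$ for $\alpha = 0$; under the spectral hypothesis $c_\alpha$ never vanishes on the spectral support of $P$ (the forbidden zero set $t_0^{-1}\mathbb{Z}\setminus\{0\}$ is both disjoint from the pure point spectrum and of measure zero for any continuous-spectrum component), so $T$ is injective with dense range in $L^2(P)$. Combined with the $\bar S$-ergodicity of $P$ established in the first step, this should force the $\bar S$-invariant probability $P'$ to coincide with $P$. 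The main obstacle will be exactly this last step: lifting the $L^2$-level denseness of $\operatorname{Range}(T)$ to a measure-level uniqueness statement for $P'$ requires a careful dual approximation, and the result is sharp — any eigenvalue in $\Sigma(P,S) \cap (t_0^{-1}\mathbb{Z}\setminus\{0\})$ would enable a periodic-in-$t_0$ modification of $P$ compatible with every identity above but distinct from $P$, which is precisely what the spectral hypothesis has been designed to exclude.
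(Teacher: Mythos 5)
Your first two steps are sound: the spectral hypothesis (which, as you correctly read it, should concern nonzero integer multiples of $1/t_0$ rather than of $t_0$ as literally written) forces $S_{t_0}$-ergodicity of $P$, since any $S_{t_0}$-invariant $\phi\in L^2(P)$ is spectrally supported on the countable set $t_0^{-1}\mathbb{Z}$, hence on $\Sigma(P,S)\cap t_0^{-1}\mathbb{Z}=\{0\}$, hence is constant; and the Ces\`aro identity correctly shows that any weak-* accumulation point $P'$ of the discrete empirical distributions is $S_{t_0}$-invariant and satisfies $\int TF\,dP' = \int TF\,dP$ for every continuous $F$, which is the same as the measure-level identity $\frac{1}{t_0}\int_0^{t_0}(S_s)_*P'\,ds = P$. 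The final step, however, is a genuine gap and the $L^2$-route cannot close it. Density of $\operatorname{Range}(T)$ in $L^2(P)$ only constrains functionals against measures absolutely continuous with respect to $P$, whereas $P'$ is an arbitrary weak-* limit and could a priori be mutually singular with $P$. In that case the spectral multiplier $c_\alpha$ and the $L^2(P)$-injectivity of $T$ say nothing about $\int(\cdot)\,dP'$; the ``careful dual approximation'' you flag is not merely delicate but unavailable, because there is no common reference measure in which to approximate.

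The correct way to close is to stay at the measure level and actually deploy the two ergodicity statements you already have. Disintegrate $P'$ into $S_{t_0}$-ergodic components $P'=\int P'_\omega\,d\kappa(\omega)$. For each $\omega$ the measure $\bar P'_\omega:=\frac{1}{t_0}\int_0^{t_0}(S_s)_*P'_\omega\,ds$ is $S$-invariant (a change of variables using the $S_{t_0}$-invariance of $P'_\omega$), and the smoothing identity becomes $P = \int \bar P'_\omega\,d\kappa(\omega)$. Since $P$ is $S$-ergodic, $\bar P'_\omega = P$ for $\kappa$-a.e.\ $\omega$. Now each $(S_s)_*P'_\omega$ is $S_{t_0}$-ergodic (factor of an ergodic system), so $\bar P'_\omega = P$ expresses $P$ as an average of $S_{t_0}$-ergodic measures; by the $S_{t_0}$-ergodicity of $P$ established in your first step and uniqueness of the ergodic decomposition, $(S_s)_*P'_\omega = P$ for Lebesgue-a.e.\ $s\in[0,t_0)$, and by weak-* continuity of $s\mapsto(S_s)_*P'_\omega$ this holds at $s=0$ as well. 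Hence $P'_\omega = P$ a.e.\ and $P'=P$. This is where the $S_{t_0}$-ergodicity of $P$ is genuinely used; in your proposal it is invoked at the end but plays no operative role in the argument as written.
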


The next result says that distributions $P\in \mathcal{P}(\mathcal{P}([0,1]))$ that are generated by a given measure $\mu$ have some additional invariance properties:
\begin{theorem}  \label{Theorem 4.7} (\cite{hochmanshmerkin2015}, Theorem 4.7) Suppose that $\mu$ generates an $S$-invariant distribution $P$. Then $P$ is supported on $\mathcal{M}^\square$ and satisfies the $S$-quasi-Palm property: for every Borel set $B\subseteq \mathcal{M}^\square$, $P(B)=1$ if and only if for every $t>0$, $P$ almost every measure $\eta$ satisfies that $\eta_{x,t}\in B$ for $\eta$ almost every $x$ such that $[x-e^{-t},x+e^{-t}]\subseteq [-1,1]$.  
\end{theorem}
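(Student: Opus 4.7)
The plan is to verify the three assertions about $P$ in turn: $S$-invariance, concentration on $\mathcal{M}^{\square}$, and the quasi-Palm characterization, which is where the real work lies. For the first, the generation hypothesis $\frac{1}{T}\int_0^T f(\mu_{x,t})\,dt \to \int f\, dP$ applied with $f\circ S_s$ in place of $f$, combined with $S_s\mu_{x,t}=\mu_{x,t+s}$ and a change of variables in the Birkhoff average, immediately yields $\int f\circ S_s\, dP = \int f\, dP$ for every continuous $f$, so $P$ is $S_s$-invariant for all $s\geq 0$. Concentration on $\mathcal{M}^{\square}$ is automatic, since $0\in\supp(\mu_{x,t})$ for every $x\in\supp(\mu)$ and every $t\geq 0$, and $\mathcal{M}^{\square}$ is closed in $\mathcal{P}([-1,1])$.

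The heart of the proof is the quasi-Palm property, and it hinges on a semigroup identity for the scenery operation. Unwinding the definitions of translation, rescaling, and restriction to $[-1,1]$ yields
\begin{equation*}
(\mu_{x,s})_{y,t} \;=\; \mu_{x + e^{-s}y,\; s+t}
\end{equation*}
for every $x\in\supp(\mu)$, every $y\in\supp(\mu_{x,s})$ with $[y-e^{-t},y+e^{-t}]\subseteq[-1,1]$, and every $s,t\geq 0$. Informally: a magnification of a scenery is itself a scenery of $\mu$, at a compound center and a compound scale.

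To extract the quasi-Palm identity, fix $t>0$ and a Borel set $B$ with $P(B)=1$, and approximate $\mathbf{1}_B$ by continuous $f$. Consider the functional
\begin{equation*}
\Phi(\eta) \;=\; \int_{[y-e^{-t},y+e^{-t}]\subseteq[-1,1]} f(\eta_{y,t})\, d\eta(y).
\end{equation*}
For $\mu$-a.e.\ $x_0$, generation gives $\frac{1}{T}\int_0^T \Phi(\mu_{x_0,s})\,ds \to \int \Phi\, dP$. Substituting the semigroup identity converts the inner integrand into $f(\mu_{x_0+e^{-s}y,\,s+t})$, and the change of variables $z=x_0+e^{-s}y$, whose Jacobian expresses $d\mu_{x_0,s}(y)$ as $\mu$-mass on a window of radius $e^{-s}$ about $x_0$ renormalized by $\mu\bigl([x_0-e^{-s},x_0+e^{-s}]\bigr)^{-1}$, rewrites the time average as a double average of $f(\mu_{z,s+t})$ over $s\in[0,T]$ and $z$ near $x_0$. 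A Fubini interchange with a second application of generation at points $z$ identifies the limit with $\int f\, dP$, so $\int \Phi\, dP = \int f\, dP$. Approximating $f\to \mathbf{1}_B$ and invoking $P(B)=1$ forces $\Phi \equiv 1$ $P$-a.s., i.e.\ $\eta_{y,t}\in B$ for $\eta$-a.e.\ admissible $y$, for $P$-a.e.\ $\eta$. The reverse direction of the iff follows by applying the forward direction to $B^c$ and using $S$-invariance.

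The principal obstacle is technical: the map $(\eta,y)\mapsto \eta_{y,t}$ is not jointly continuous where $[y-e^{-t},y+e^{-t}]$ meets the boundary of $[-1,1]$, and the normalizing constants in the definitions of $\mu^x$ and $S_s\mu$ must be tracked uniformly through the change of variables. These difficulties are absorbed by restricting to test configurations compactly contained in $(-1,1)$ (precisely the compatibility condition in the statement) and by observing that the normalizing constants, expressed as $\mu$-masses of the relevant nested windows, cancel in the change of variables. Once these technicalities are tamed, the double ergodic/Fubini step closes the argument.
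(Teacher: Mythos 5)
The paper does not prove this statement; Theorem \ref{Theorem 4.7} is imported verbatim as a citation to Theorem 4.7 of Hochman--Shmerkin \cite{hochmanshmerkin2015}, whose proof in turn rests on the CP-chain and fractal-distribution machinery of \cite{hochman2010dynamics}. There is therefore no argument in the present paper to compare yours against.

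Evaluated on its own terms, your sketch has one concrete error and one substantial unfilled gap. The error: $\mathcal{M}^{\square}$ is \emph{not} closed in $\mathcal{P}([-1,1])$. Take $\nu_n = \tfrac{1}{n}\delta_0 + (1-\tfrac{1}{n})\delta_1$; then $0\in\supp(\nu_n)$ for every $n$, yet $\nu_n\to\delta_1$ weakly and $0\notin\supp(\delta_1)$. So the fact that the empirical distributions along the scenery flow are carried by $\mathcal{M}^{\square}$ does not by itself force the weak-* limit $P$ to be supported there; in the source this is a consequence of the quasi-Palm argument, not an input to it (compare Proposition \ref{Prop tight}, which gives the uniform lower bounds for $P$-typical $\nu$ near $0$ that actually pin down the support). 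The gap: the functional $\Phi(\eta)=\int f(\eta_{y,t})\,d\eta(y)$ is not continuous on $\mathcal{P}([-1,1])$ --- both because $y\mapsto\eta_{y,t}$ is defined only on $\supp(\eta)$, which varies discontinuously in $\eta$, and because of the boundary effect you note --- so $\tfrac{1}{T}\int_0^T\Phi(\mu_{x_0,s})\,ds\to\int\Phi\,dP$ does not follow from the definition of generation, which applies to continuous test functions. Moreover, the ``second application of generation at points $z$'' after your change of variables is not a legitimate Fubini step: the $z$-domain $[x_0-e^{-s},x_0+e^{-s}]$ shrinks as $s$ grows, so you cannot freeze $z$ and average in $s$; controlling that coupled double limit is precisely where Hochman's discrete martingale/CP-chain structure does the real work. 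Your semigroup identity $(\mu_{x,s})_{y,t}=\mu_{x+e^{-s}y,\,s+t}$ is correct under the window constraint, and the overall strategy is the right one, but as written the argument does not close.
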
 

We shall refer henceforth to $S$-ergodic distributions $P$ supported on $\mathcal{M}^\square$ that satisfy the conclusion of Theorem \ref{Theorem 4.7} as EFD's (Ergodic Fractal Distributions), a term coined by Hochman in \cite{hochman2010dynamics}. The next Proposition says that typical measures with respect to a non-trivial EFD have positive dimension (recall the definition of non-triviality in this situation from Section \ref{Section scenrey flow}):
\begin{Proposition} \label{Prop posit dim for EFD} \label{Prop positive dim for EFD} (\cite{hochmanshmerkin2015}, Proposition 4.12)
Let $P$ be an EFD. Then there exists some $\delta\geq 0$ such that $P$ almost every $\nu$ has $\dim \nu = \delta$. If $P$ is non-trivial then $\delta>0$.
\end{Proposition}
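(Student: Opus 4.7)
The plan is to identify $\dim\mu$ with a dynamical quantity of the scaling flow, use $S$-ergodicity of $P$ to conclude constancy, and use the quasi-Palm property (Theorem \ref{Theorem 4.7}) to transfer it from the origin to typical points. Define $h_\mu(t)=-\log\mu([-e^{-t},e^{-t}])$ and $g(\mu)=h_\mu(1)\ge 0$, and put $\delta:=\int g\,dP\in[0,\infty]$. A direct calculation from the definition of $S_t$ yields the cocycle identity $h_\mu(t+s)=h_\mu(t)+h_{S_t\mu}(s)$, so in particular $g(S_t\mu)=h_\mu(t+1)-h_\mu(t)$.

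By Birkhoff's ergodic theorem applied to $g$, for $P$-a.e.\ $\mu$ one has $\frac{1}{T}\int_0^T g(S_t\mu)\,dt\to\delta$; the cocycle identity telescopes this to
$$\frac{1}{T}\int_T^{T+1}h_\mu(s)\,ds-\frac{1}{T}\int_0^1 h_\mu(s)\,ds\longrightarrow\delta.$$
Since $h_\mu$ is non-decreasing, sandwiching gives $h_\mu(T)/T\le \frac{1}{T}\int_T^{T+1}h_\mu(s)\,ds\le h_\mu(T+1)/T$, and combined with the standard fact that $g(S_T\mu)/T\to 0$ $P$-a.s.\ for $L^1$-observables of an ergodic flow, this forces $h_\mu(T)/T\to\delta$. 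Hence the local dimension of $\mu$ at $0$ exists and equals $\delta$ (forcing $\delta\le 1$ a posteriori). To upgrade this to $\mu$-typical points I would invoke the $S$-quasi-Palm property: for $P$-a.e.\ $\mu$ and $\mu$-a.e.\ $x$ with $[x-e^{-t},x+e^{-t}]\subseteq [-1,1]$, each zoomed measure $\mu_{x,t}=S_t\mu^x$ is $P$-typical, so its local dimension at $0$ is $\delta$; a brief computation identifies this with the local dimension of $\mu$ at $x$. By \eqref{Eq lower dim}, $\dim\mu=\delta$ for $P$-a.e.\ $\mu$.

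For positivity, suppose $P$ is non-trivial and $\delta=0$. Since $g\ge 0$, we must have $g=0$ $P$-a.s., that is, $\mu([-e^{-1},e^{-1}])=1$ for $P$-a.e.\ $\mu$. By $S$-invariance of $P$, applying the same identity to $S_n\mu$ for each $n\in\mathbb{N}$ holds on a $P$-conull set, which unpacks to $\mu([-e^{-n-1},e^{-n-1}])=\mu([-e^{-n},e^{-n}])$ for every $n$. Induction from $\mu([-1,1])=1$ then gives $\mu([-e^{-n},e^{-n}])=1$ for all $n$, whence $\mu(\{0\})=1$, so $\mu=\delta_0$ $P$-a.s., contradicting non-triviality. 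The main technical hurdle I anticipate is the sandwich step: promoting the Birkhoff time-average of $g$ to the pointwise statement $h_\mu(T)/T\to\delta$ requires the fluctuation bound $g(S_T\mu)/T\to 0$ and integrability of $g$, both of which are standard consequences of the ergodic theorem for ergodic flows but are the technical core of the argument.
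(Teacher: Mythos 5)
Your proof is correct and follows essentially the same route as the cited reference: encode the local dimension at $0$ as a Birkhoff average of a non-negative observable $g$ along the scaling flow, conclude $P$-a.s.\ constancy from $S$-ergodicity, transfer from $0$ to $\mu$-typical $x$ via the $S$-quasi-Palm property (Theorem \ref{Theorem 4.7}), and obtain positivity by showing that $\int g\,dP=0$ forces $P$-a.e.\ measure to be the fixed point $\delta_0$. Two of the issues you flag as ``hurdles'' in fact dissolve: the sandwich closes without the auxiliary claim $g(S_T\mu)/T\to 0$, since dividing $\frac{1}{T}\int_T^{T+1}h_\mu\,ds \le h_\mu(T+1)/T$ through by $(T+1)/T\to 1$ already gives $\liminf_T h_\mu(T)/T\ge\delta$; and since $g\ge 0$ the one-sided ergodic theorem yields $\liminf_T\frac{1}{T}\int_0^T g\circ S_t\,dt\ge\int g\,dP$ with no integrability hypothesis, and $\int g\,dP=\infty$ is ruled out a posteriori because the telescoping plus quasi-Palm transfer would then force $\dim(\mu,x)=\infty$ at $\mu$-a.e.\ $x$, impossible for a Borel measure on $\mathbb{R}$; hence $g\in L^1(P)$ and standard Birkhoff applies.

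The one step you pass over silently is that the quasi-Palm property only governs $\eta$-a.e.\ $x$ in $[-1+e^{-t},1-e^{-t}]$, so to conclude $\dim(\eta,x)=\delta$ for $\eta$-a.e.\ $x$ (and hence $\dim\eta=\delta$ via equation \eqref{Eq lower dim}) you also need $\eta(\{\pm 1\})=0$ for $P$-a.e.\ $\eta$. This does hold: by $S$-invariance of $P$, the quantity $P\{\eta:\eta(\{e^{-t}\})>0\}$ equals $P\{\eta:\eta(\{1\})>0\}$ for every $t>0$, while a Fubini argument (each $\eta$ has at most countably many atoms, so $\{t>0:\eta(\{e^{-t}\})>0\}$ is Lebesgue-null) forces this quantity to vanish for a.e.\ $t$, hence identically. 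With this addition the argument is complete.
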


We will also need to know that $P$-typical measures are not "one sided at small scales"
\begin{Proposition} (\cite{hochmanshmerkin2015}, Proposition 4.13) \label{Prop tight}
Let $P$ be an EFD. For every $\rho>0$, for $P$ almost every $\nu$, we have $\inf \nu(I)>0$, where $I\subseteq [-1,1]$ ranges over closed intervals of length $\rho$ containing $0$.
\end{Proposition}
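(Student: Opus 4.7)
The plan is to reduce the infimum condition to pointwise positivity of $\nu$ on the two extreme intervals $[0,\rho]$ and $[-\rho,0]$, and then to handle each of these by combining $S$-ergodicity with the quasi-Palm property.

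The trivial case $P=\delta_{\delta_0}$ is immediate, so assume $P$ is non-trivial; Proposition \ref{Prop posit dim for EFD} then gives $\dim\nu>0$, hence $\nu$ is non-atomic, for $P$-a.e.\ $\nu$. For such $\nu$ the map $a\mapsto\nu([a,a+\rho])$ is continuous on the compact interval $a\in[-\rho,0]$, and at any interior $a\in(-\rho,0)$ the point $0$ lies in $(a,a+\rho)$, so $\nu([a,a+\rho])>0$ simply because $0\in\supp(\nu)$. Thus it suffices to prove that $\nu([0,\rho])>0$ and $\nu([-\rho,0])>0$ for $P$-a.e.\ $\nu$: a positive continuous function on a compact set has a positive minimum.

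By symmetry I treat only $\nu([0,\rho])>0$. Consider the event
\[
E^{*}\ =\ \{\nu\in\mathcal{M}^{\square}:\ \nu([0,\rho'])>0\ \text{for all}\ \rho'>0\},
\]
which is $S$-invariant since $S_{t}\nu([0,\rho'])$ is a positive multiple of $\nu([0,e^{-t}\rho'])$. Because $E^{*}\subseteq\{\nu:\nu([0,\rho])>0\}$ and $P$ is $S$-ergodic, the task reduces to ruling out $P(E^{*})=0$. So assume for contradiction $P(E^{*c})=1$. Apply the quasi-Palm property of Theorem \ref{Theorem 4.7} to the full-measure set $E^{*c}$ at scale $t=1$: unwinding the definitions of translation and scaling yields, for $P$-a.e.\ $\eta$ and $\eta$-a.e.\ $x$ with $[x-e^{-1},x+e^{-1}]\subseteq[-1,1]$, some $\delta(x)>0$ with $\eta((x,x+\delta(x)))=0$. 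The definition of support then forces $(x,x+\delta(x))\cap\supp(\eta)=\emptyset$, so $\eta$-a.e.\ point of $\supp(\eta)$ is right-isolated.

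The main obstacle is closing out the contradiction. The set $R$ of right-isolated points of $\supp(\eta)$ injects into a family of pairwise disjoint nonempty open subintervals of $[-1,1]$ via $x\mapsto(x,g(x))$, where $g(x)$ is the next point of $\supp(\eta)$ to the right of $x$; hence $R$ is countable. But $\eta$ is non-atomic, so $\eta(R)=0$, contradicting $\eta(R)=1$. Non-triviality of $P$ is essential here, since it is via continuity of $\eta$ that the countability of $R$ yields $\eta(R)=0$; without it one could have $\eta=\delta_{0}$, which is right-isolated in its own support and is perfectly consistent with the hypothesis. The symmetric bound $\nu([-\rho,0])>0$ follows from the identical argument applied to the left-sided $S$-invariant event $\{\nu:\nu([-\rho',0])>0\ \forall\rho'>0\}$, with left gaps replacing right gaps in $\supp(\eta)$.
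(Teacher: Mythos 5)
Your proof is correct, and since the paper cites this result from Hochman--Shmerkin without reproducing the argument, there is nothing internal to compare against; what you've written is a self-contained reconstruction in the spirit of their quasi-Palm arguments. The structure is sound: the reduction to the two extreme intervals $[0,\rho]$ and $[-\rho,0]$ via continuity (a consequence of non-atomicity, which you correctly extract from Proposition \ref{Prop posit dim for EFD}) and $0\in\supp\nu$, the exact $S$-invariance of the one-sided event $E^{*}$, the dichotomy from ergodicity, and the application of quasi-Palm to $E^{*c}$ to produce right-isolated support points are all verified correctly. The countability-of-gaps argument against non-atomicity then closes the contradiction. Two small points worth tightening: first, the claim ``$\eta(R)=1$'' as literally obtained from the quasi-Palm property at the single scale $t=1$ only gives $\eta(R)\geq\eta([-1+e^{-1},1-e^{-1}])>0$ (which already suffices for the contradiction); to actually get $\eta(R)=1$ one should take a countable dense set of scales $t\downarrow 0$ and note that $\bigcup_t[-1+e^{-t},1-e^{-t}]=(-1,1)$, together with non-atomicity to kill $\{-1,1\}$. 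Second, you implicitly assume $\rho\leq 1$: for $\rho\in(1,2]$ every admissible interval contains $0$ in its interior so the conclusion is immediate from $0\in\supp\nu$, and for $\rho>2$ the family is empty. Neither affects the substance of the argument.
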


The next Proposition follows from the $S$-invariance of EFD's, and from a Theorem of Hunt and Kaloshin \cite{Hunt1997Kaloshin}:
\begin{Proposition} \label{Lemma 5.8} (\cite{hochmanshmerkin2015}, Lemma 5.8)
Let $P$ be a non trivial EFD such that $P$ typical measures have dimension $\delta>0$. Let $\tau\in \mathcal{P}(\mathbb{R})$ be such that $\dim \tau \geq 1-\delta$. Then $\dim \tau * \nu =1$ for $P$ almost every $\nu$.
\end{Proposition}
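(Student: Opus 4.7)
The plan is to exploit the $S$-invariance of $P$ to convert a "generic scaling" statement into a "$P$-almost surely" statement. The key external input is the relevant consequence of the Hunt--Kaloshin theorem: for any fixed pair $\tau, \nu \in \mathcal{P}(\mathbb{R})$ with $\dim \tau + \dim \nu \geq 1$, Lebesgue-almost every $t \in \mathbb{R}$ satisfies $\dim(\tau * S_t \nu) = 1$. That is, the convolution of $\tau$ with a generic dilate of $\nu$ has full dimension as soon as the dimensions sum to at least one.

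Set $\mathcal{G} = \{\nu \in \mathcal{M}^\square : \dim(\tau * \nu) = 1\}$; the goal is to show $P(\mathcal{G}) = 1$. By the $S$-invariance of $P$, for every $T > 0$ one has $P(\mathcal{G}) = \frac{1}{T}\int_0^T P(S_t^{-1}\mathcal{G})\, dt$, and by Fubini this equals
$$\int_{\mathcal{M}^\square} \frac{1}{T}\,\mathcal{L}\bigl(\{t \in [0,T] : \dim(\tau * S_t\nu) = 1\}\bigr)\, dP(\nu).$$

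Now by Proposition \ref{Prop posit dim for EFD}, $P$-almost every $\nu$ satisfies $\dim \nu = \delta$, and the hypothesis $\dim \tau \geq 1 - \delta$ then gives $\dim \tau + \dim \nu \geq 1$ for $P$-typical $\nu$. Hunt--Kaloshin applied to such a fixed $\nu$ tells us that the inner Lebesgue measure is equal to $T$. Hence the integrand equals $1$ for $P$-almost every $\nu$, giving $P(\mathcal{G}) = 1$ as desired.

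The main obstacle sits entirely inside the invocation of Hunt--Kaloshin: one needs the fact that for any fixed $(\tau,\nu)$ with $\dim\tau+\dim\nu \geq 1$, the set of "bad" dilation parameters $t$ is Lebesgue-null, rather than a null set with respect to some other measure. Once this input is granted, the $S$-invariance of $P$ converts a "null set of scales" into a "$P$-null set of measures" by the Fubini argument above; $S$-ergodicity is used only through Proposition \ref{Prop posit dim for EFD} to ensure $P$-typical measures share a common positive dimension $\delta$.
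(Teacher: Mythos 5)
Your proof is correct and follows exactly the route the paper indicates: the sentence preceding the Proposition says it follows from $S$-invariance and the Hunt--Kaloshin theorem, and your Fubini argument implements precisely this, with $S$-ergodicity entering only through Proposition \ref{Prop posit dim for EFD} to pin down the common dimension $\delta$. The one detail left implicit is that $\tau * S_t\nu$ is absolutely continuous with respect to the genuine linear projection of $\tau\times\nu$ under $(x,y)\mapsto x+e^{t}y$ (since $S_t\nu$ is a normalized restriction of the $e^{t}$-dilate of $\nu$), so $\dim(\tau*S_t\nu)$ is at least the dimension of that projection, to which the Hunt--Kaloshin theorem applies directly.
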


Finally, the next Proposition shows that ergodic $T_p$ invariant measures of positive dimension generate non-trivial EFD's:
\begin{theorem} \cite{hochman2010geometric} \label{Theorem 2.4}
Let $\mu\in \mathcal{P}([0,1])$ be a $T_p$ invariant ergodic measure with $\dim \mu>0$. Then $\mu$ generates a non-trivial $S$ ergodic distribution $P$ (which is an EFD by Theorem \ref{Theorem 4.7}). 
\end{theorem}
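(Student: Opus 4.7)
The plan is to construct $P$ via a discrete ``magnification chain'' driven by the $p$-adic partition, and then pass to the continuous scaling flow via a suspension. First, for $x\in \supp(\mu)$ and $k\geq 0$, let $\mathcal{D}_{p^k}(x)$ denote the level-$k$ $p$-adic cell containing $x$ and define the discrete scenery
\begin{equation*}
\nu_k(x):= T_p^k\bigl(\mu|_{\mathcal{D}_{p^k}(x)}\bigr)\big/\mu\bigl(\mathcal{D}_{p^k}(x)\bigr) \in \mathcal{P}([0,1]).
\end{equation*}
Since $\mu$ is $T_p$-invariant, $\nu_{k+1}(x)$ is obtained from $\nu_k(x)$ by restricting to one of its $p$ subcells (the one coding the next digit of $x$) and rescaling by $T_p$; thus $k\mapsto \nu_k(x)$ is an orbit of a deterministic ``magnification operator'' $M$ on $\mathcal{P}([0,1])$, and the map $x\mapsto \nu_0(x)$ intertwines $T_p$ with $M$, exhibiting $(\mathcal{P}([0,1]),M)$ as a measurable factor of $(\mathbb{T},T_p,\mu)$.

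By the pointwise ergodic theorem, for every $f\in C(\mathcal{P}([0,1]))$ the Birkhoff averages $\frac{1}{N}\sum_{k=0}^{N-1} f(\nu_k(x))$ converge $\mu$-a.e.\ to $\int f\,d\tilde P$, where $\tilde P$ is an $M$-invariant, $M$-ergodic distribution (ergodicity is inherited from that of $(T_p,\mu)$). To bridge this discrete dynamics to the continuous flow $S$, one observes that for $t\in[k\log p,(k+1)\log p)$ the rescaling $\mu_{x,t}$ is $S_{t-k\log p}$ applied to a bounded translate of $\nu_k(x)$, where the translation merely records the position of $x$ inside $\mathcal{D}_{p^k}(x)$. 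Defining $P$ as the push-forward of $\tilde P \times (\log p)^{-1}\lambda|_{[0,\log p)}$ under $(\nu,s)\mapsto S_s\nu$ (with a re-centering to keep $0$ in the support), or equivalently as the common weak-$*$ limit $\frac{1}{T}\int_0^T \delta_{\mu_{x,t}}\,dt$, yields an $S$-invariant distribution. Invariance under $S_{\log p}$ is immediate from $M$-invariance of $\tilde P$, invariance under general $S_t$ follows from the suspension construction, and $S$-ergodicity of $P$ follows from $M$-ergodicity of $\tilde P$ together with the standard fact that a suspension of an ergodic system over a constant roof function is ergodic.

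Non-triviality is the step I expect to be most delicate. If $P$ were concentrated on $\delta_0$, then $\tilde P$-a.e.\ $\nu$ would place most of its mass arbitrarily close to $0$; unwinding the definition of $\nu_k(x)$, this would force $\mu\bigl(\mathcal{D}_{p^{k+j}}(x)\bigr)/\mu\bigl(\mathcal{D}_{p^k}(x)\bigr)\to 1$ along a sequence of $j$'s for a positive $\mu$-measure set of $x$. This in turn would make $\liminf_{r\to 0}\log \mu(B(x,r))/\log r = 0$ $\mu$-a.e., so by \eqref{Eq lower dim} one would conclude $\dim \mu = 0$, contradicting the hypothesis $\dim \mu > 0$ (using Theorem \ref{Theorem 9.1} to turn entropy into dimension if needed). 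Finally, since $P$ is an $S$-ergodic distribution on $\mathcal{M}^\square$ generated by $\mu$, Theorem \ref{Theorem 4.7} supplies the $S$-quasi-Palm property and upgrades $P$ to a non-trivial EFD, as required.
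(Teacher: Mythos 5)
First, a contextual remark: the paper does not prove this theorem---it is quoted directly from Hochman's work \cite{hochman2010geometric}, so there is no ``in-paper proof'' to compare against. Your outline does follow the same broad route as Hochman's argument (pass from the $p$-adic magnification process to a CP-type distribution, then to the scaling flow by suspension), so the strategy is appropriate. However, there are genuine gaps in the execution.

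The central problem is the claim that $k\mapsto\nu_k(x)$ is an orbit of a \emph{deterministic} operator $M$ on $\mathcal{P}([0,1])$, and that ``$x\mapsto\nu_0(x)$ intertwines $T_p$ with $M$.'' Neither statement is correct: $\nu_0(x)\equiv\mu$ is constant, so it cannot intertwine anything, and the transition $\nu_k(x)\mapsto\nu_{k+1}(x)$ restricts to the subcell coding the digit $a_{k+1}(x)$, which is \emph{not} a function of the measure $\nu_k(x)$ alone. There is therefore no self-map $M$ of $\mathcal{P}([0,1])$ realizing these dynamics, and you cannot invoke the pointwise ergodic theorem for a factor. The correct framework is a Markov chain (CP-chain) on $\mathcal{P}([0,1])\times[0,1]$, where the second coordinate records the point, or equivalently one works in the two-sided natural extension of $(\mathbb{T},T_p,\mu)$ where the past digits determine a genuine limit micromeasure via martingale convergence; only then does the ergodic theorem give a well-defined $\tilde P$.

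Two further issues are downstream of this. First, the passage from $\tilde P$ to the $S$-invariant $P$ requires re-centering at $x$, i.e.\ a translation by the position of $x$ inside $\mathcal{D}_{p^k}(x)$; but $\tilde P$, as you have set it up, carries no information about that position, so the ``push-forward under $(\nu,s)\mapsto S_s\nu$ (with a re-centering)'' is not actually defined from your data. Again, this is exactly the extra $[0,1]$ coordinate of the CP-chain. Second, the non-triviality argument has a centering mismatch: $\nu_k(x)\to\delta_0$ in your convention means concentration at the \emph{left endpoint} of $\mathcal{D}_{p^k}(x)$, not at $x$, so it does not directly yield $\mu(\mathcal{D}_{p^{k+j}}(x))/\mu(\mathcal{D}_{p^k}(x))\to1$. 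Working with the correctly centered objects $\mu_{x,t}$ fixes this, and then the step from ``zero local dimension on a positive-measure set'' to ``$\dim\mu=0$'' should explicitly invoke exact dimensionality of ergodic $T_p$-invariant measures (Theorem \ref{Theorem 9.1}), rather than jump from a positive-measure set to an almost-everywhere statement.
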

Let $m\not \sim p$. We remark that while non-degenerate Cantor-Lebesgue measures with respect to base $p$ do generate EFD's $P$ such that $\frac{k}{\log m} \notin \Sigma(P,S)$ for every non zero integer $k$, this is not true in general. Thus, in order to deduce Theorem \ref{Main Theorem} from Theorem \ref{Conjecture}, we shall require some additional machinery developed by Hochman and Shmerkin in \cite{hochmanshmerkin2015} for a similar purpose. This is discussed in Section \ref{Section final proof}.

\section{Some properties of (times m, times n) invariant measures} \label{Section rig}
Throughout this section we fix integers $m>n>1$. We begin with an elementary Lemma from entropy theory. Recall that we denote the coordinate projections by $P_1,P_2$.
\begin{Lemma} \label{Lemma entropy}
Let $\alpha\in \mathcal{P}(\mathbb{T}^2)$ be a $T_m \times T_n$ invariant  measure such that  $P_2 \alpha = \rho$. If  
\begin{equation*}
h(T_m \times T_n, \alpha) = \log m + h(T_n,\rho)
\end{equation*}
then $\alpha = \lambda \times \rho$.
\end{Lemma}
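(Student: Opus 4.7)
The plan is to proceed in two stages. First, I would derive $P_1\alpha=\lambda$ by the standard subadditivity of entropy under projections. Taking the generating partitions $\mathcal{D}_m$ and $\mathcal{D}_n$ of $\mathbb{T}$ under $T_m$ and $T_n$ respectively, the product $\mathcal{D}_m\times\mathcal{D}_n$ is a topological generator for $T_m\times T_n$ with $k$-th refinement $\mathcal{D}_{m^k}\times\mathcal{D}_{n^k}$. Subadditivity of Shannon entropy on joint partitions gives
\begin{equation*}
H_\alpha(\mathcal{D}_{m^k}\times\mathcal{D}_{n^k})\le H_{P_1\alpha}(\mathcal{D}_{m^k}) + H_\rho(\mathcal{D}_{n^k}),
\end{equation*}
so that dividing by $k$ and letting $k\to\infty$ yields $h(\alpha,T_m\times T_n)\le h(P_1\alpha,T_m)+h(\rho,T_n)$. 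Since $h(\nu,T_m)\le\log m$ with equality only for $\nu=\lambda$ (by uniqueness of the measure of maximal entropy for $T_m$, a consequence of Theorem \ref{Theorem 9.1} combined with the strict concavity of Shannon entropy), the hypothesis forces $h(P_1\alpha,T_m)=\log m$ and hence $P_1\alpha=\lambda$.

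Next I would upgrade this to $\alpha=\lambda\times\rho$ via the Abramov--Rokhlin formula applied to the factor map $P_2\colon(\mathbb{T}^2,T_m\times T_n,\alpha)\to(\mathbb{T},T_n,\rho)$, namely
\begin{equation*}
h(\alpha,T_m\times T_n) = h(\rho,T_n) + h(\alpha\mid P_2^{-1}\mathcal{B}),
\end{equation*}
which makes the fiber (relative) entropy equal to $\log m$, the maximum possible. Disintegrating $\alpha=\int\alpha_y\,d\rho(y)$ along the second coordinate, the fiber entropy takes the form $\lim_k \frac{1}{k}\int H_{\alpha_y}(\mathcal{D}_{m^k})\,d\rho(y)$, and the concavity of Shannon entropy together with the identity $\int\alpha_y\,d\rho=P_1\alpha=\lambda$ bounds this by $\log m$, with equality in the limit by hypothesis. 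Combined with the uniqueness of $\lambda$ as the MME of $T_m$ and the $T_m\times T_n$-invariance of $\alpha$, which forces the fiber consistency relation $\alpha_z = \sum_{T_n y=z} w_{y\mid z}(T_m)_*\alpha_y$ for $\rho$-a.e.\ $z$, this should imply $\alpha_y=\lambda$ for $\rho$-a.e.\ $y$, and hence $\alpha=\lambda\times\rho$.

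The main obstacle is this last step: converting an asymptotic equality of entropy rates into the pointwise identification of fiber measures. The cleanest closing is through the relative variational principle of Ledrappier--Walters, which identifies the fiber entropy as a supremum over compatible invariant fiber extensions whose unique maximizer, by uniqueness of the MME for $T_m$, is forced to have Lebesgue fibers. An alternative, more hands-on route analyzes the concavity gap
\begin{equation*}
G_k := k\log m - \int H_{\alpha_y}(\mathcal{D}_{m^k})\,d\rho(y)\ge 0,
\end{equation*}
which is precisely the $\alpha$-mutual information between the first coordinate at scale $m^{-k}$ and the full second coordinate. This quantity is monotone non-decreasing in $k$ and satisfies $G_k/k\to 0$; leveraging the $T_m\times T_n$-invariance of $\alpha$ to obtain a stationarity relation among the increments $G_{k+1}-G_k$ should force $G_k=0$ for every $k$, yielding independence of the two coordinates at every dyadic scale and therefore $\alpha=\lambda\times\rho$.
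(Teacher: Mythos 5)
Your overall framework is close to the paper's: both apply Abramov--Rokhlin to the factor $P_2$ to turn the hypothesis into ``fiber entropy equals $\log m$,'' and both disintegrate $\alpha$ over $\rho$ and try to conclude $\alpha_y=\lambda$ for a.e.\ $y$. But you correctly flag the crucial step as an ``obstacle'' and neither of your proposed workarounds closes it.

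The missing idea is \emph{Fekete's lemma applied to the conditional entropy sequence} $a_k := H_\alpha\bigl(\bigvee_{i=0}^{k-1}(T_m\times T_n)^{-i}(\mathcal{D}_m\times\mathcal{D}_n)\,\big|\,\mathcal{E}\bigr)$, where $\mathcal{E}=P_2^{-1}\mathcal{B}$. This sequence is subadditive, so $\inf_k a_k/k=\lim_k a_k/k = h(T_m\times T_n,\alpha\,|\,\mathcal{E})=\log m$; combined with the uniform bound $a_k/k\le\log m$ (each conditional $k$-block has at most $m^k$ cells per fiber), one gets $a_k=k\log m$ for \emph{every} finite $k$, not just asymptotically. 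Then $\int H_{\alpha_y}(\mathcal{D}_{m^k})\,d\rho(y)=k\log m$ while $H_{\alpha_y}(\mathcal{D}_{m^k})\le k\log m$ pointwise, so $\alpha_y$ is uniform on $\mathcal{D}_{m^k}$ a.s.\ for each $k$, whence $\alpha_y=\lambda$ a.s.\ and $\alpha=\lambda\times\rho$. In your notation, $G_k=k\log m-a_k$ is \emph{superadditive} (equivalent to subadditivity of $a_k$); superadditivity together with $G_k\ge 0$ and $G_k/k\to 0$ forces $G_k\equiv 0$, since $G_{Nk}\ge NG_k$. Mere monotonicity and $G_k/k\to 0$, which is all you assert, do \emph{not} suffice — think $G_k=\log k$ — and your appeal to ``a stationarity relation among the increments'' is not a proof. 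Your Route A has a separate gap: the disintegrated fiber measures $\alpha_y$ are not individually $T_m$-invariant (the map $T_m\times T_n$ sends the fiber over $y$ to the fiber over $T_n y$), so uniqueness of the measure of maximal entropy for $T_m$ cannot be invoked directly on a fiber. Finally, your Stage~1 (deriving $P_1\alpha=\lambda$ first) is a detour the paper never needs: once $a_k=k\log m$ for all $k$ the fiber structure follows immediately.
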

\begin{proof}
Let  $\mathcal{E}$ be the invariant sigma algebra that corresponds to the second coordinate of $\mathbb{T}^2$. Then, by the Abramov-Rokhlin Lemma (see \cite{Crauel1992formula} for the non-invertible case),
\begin{equation*}
h(T_m \times T_n, \alpha) = h(T_m \times T_n, \alpha |\mathcal{E})+h(T_n,\rho).
\end{equation*}
Combining this with our condition, we see that 
\begin{equation*}
h(T_m \times T_n, \alpha |\mathcal{E}) = \log m.
\end{equation*}
Recall that the partition $\mathcal{A} = \mathcal{D}_m \times \mathcal{D}_n$ is a generating partition of $\mathbb{T}^2$ (see Section \ref{Section relating}). Then it follows from Fekete's Lemma and the Kolmogorov-Sinai Theorem that
\begin{equation*}
\inf_k \frac{1}{k} H_\alpha ( \bigvee_{i=0} ^{k-1} (T_m\times T_n)^{i} \mathcal{A} | \mathcal{E})= h(T_m \times T_n, \mathcal{A}, \alpha |\mathcal{E}) = h(T_m \times T_n, \alpha |\mathcal{E}) = \log m.
\end{equation*}
As $\log m$ is also an upper bound for the sequence $\lbrace \frac{1}{k} H_\alpha ( \bigvee_{i=0} ^{k-1} (T_m\times T_n)^{i} \mathcal{A} | \mathcal{E}) \rbrace$, we find that for every $k\in \mathbb{N}$,
\begin{equation*}
\frac{1}{k} H_\alpha ( \bigvee_{i=0} ^{k-1} (T_m\times T_n)^{i} \mathcal{A} | \mathcal{E}) = \log m.
\end{equation*}
So, by the formula for conditional entropy as average of the conditional measures $\lbrace \alpha_x ^{\mathcal{E}} \rbrace$,
\begin{equation*}
\log m ^k = H_\alpha ( \bigvee_{i=0} ^{k-1} (T_m\times T_n)^{i} \mathcal{A}| \mathcal{E} ) = \int H_{\alpha_x ^{\mathcal{E}}} ( \bigvee_{i=0} ^{k-1} (T_m\times T_n)^{i} \mathcal{A} ) d\rho (x)= \int H_{\alpha_x ^{\mathcal{E}}} ( \bigvee_{i=0} ^{k-1} T_m^{i} \mathcal{D}_m  ) d\rho (x),
\end{equation*}
where the partition in the last term on the RHS should be understood as the corresponding partition on the fiber $[0,1]\times \lbrace  P_2 (x) \rbrace$. We also have $H_{\alpha_x ^{\mathcal{E}}} ( \bigvee_{i=0} ^{k-1} T_m^{i} \mathcal{D}_m ) \leq \log m^k$ almost surely, since $\bigvee_{i=0} ^{k-1} T_m^{i} \mathcal{D}_m$ has $m^k$ atoms. Therefore,
\begin{equation*}
H_{\alpha_x ^{\mathcal{E}}} ( \bigvee_{i=0} ^{k-1} T_m^{i} \mathcal{D}_m  ) = \log m ^k 
\end{equation*}
almost surely.  Such an equality is possible only if  $\alpha_x ^{\mathcal{E}}$ is the uniform measure on $\mathcal{D}_{m^k}$. It follows that almost surely the measure $\alpha_x ^{\mathcal{E}}$ is the uniform measure on $\mathcal{D}_{m^k}$ for every $k$. By the Kolmogorov consistency Theorem, $\alpha_x ^{\mathcal{E}} = \lambda$ almost surely. Since $\alpha = \int \alpha_x ^{\mathcal{E}} d\rho (x)$, this proves the result.
\end{proof}

\begin{Claim} \label{Claim unqiue} 
Let $\theta \in \mathcal{P}(\mathbb{T}^2)$ be a $T_m \times T_n$ invariant measure such that $P_2 \theta = \rho$ is exact dimensional. If $\dim \theta =1+\dim \rho$ then $\theta = \lambda\times \rho$.
\end{Claim}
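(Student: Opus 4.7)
The plan is to reduce the statement to Lemma \ref{Lemma entropy} applied fiberwise, with the translation between the dimension hypothesis and the entropy hypothesis provided by the Ledrappier--Young formula of Theorem \ref{Theorem Led young}. Applying this formula to $\theta$ we obtain, for $\theta$-a.e.\ $x$, that the local dimension exists and equals
\[
\dim(\theta,x) = \frac{h(\theta^x, T_m\times T_n) - h(\rho^{P_2 x}, T_n)}{\log m} + \frac{h(\rho^{P_2 x}, T_n)}{\log n}.
\]
The Abramov--Rokhlin formula applied to the factor map $P_2:(\mathbb{T}^2,T_m\times T_n,\theta^x)\to (\mathbb{T},T_n,P_2\theta^x)$ identifies the numerator in the first summand with the relative entropy $h(\theta^x,T_m\times T_n\mid P_2^{-1}\mathcal{B})$. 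Taking $\mathcal{D}_m\times \mathbb{T}$ as a relative generator, whose $k$-th $(T_m\times T_n)$-refinement has at most $m^k$ atoms, bounds this relative entropy by $\log m$. Together with the identification $P_2\theta^x=\rho^{P_2 x}$ (standard: ergodicity of $\theta^x$ and $P_2^{-1}\mathcal{I}_{T_n}\subseteq \mathcal{I}_{T_m\times T_n}$ force $P_2\theta^x$ to be the unique ergodic component of $\rho$ at $P_2 x$) and Theorem \ref{Theorem 9.1} applied to the $T_n$-ergodic measure $\rho^{P_2 x}$, we conclude
\[
\dim(\theta,x) \;\leq\; 1 + \dim \rho^{P_2 x}.
\]

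Next I would pinch this inequality. Exact dimensionality of $\rho$ together with \eqref{Eq upper dim} yields $\dim \rho^{P_2 x}\leq \dim\rho$ for $\theta$-a.e.\ $x$, while \eqref{Eq lower dim} and the hypothesis $\dim\theta=1+\dim\rho$ give $\dim(\theta,x)\geq 1+\dim\rho$ for $\theta$-a.e.\ $x$. Combining,
\[
1+\dim \rho \;\leq\; \dim(\theta,x) \;\leq\; 1+\dim \rho^{P_2 x} \;\leq\; 1+\dim \rho,
\]
so both bounds are equalities. In particular $h(\theta^x,T_m\times T_n) = \log m + h(P_2\theta^x,T_n)$ for $\theta$-a.e.\ $x$, which is exactly the hypothesis of Lemma \ref{Lemma entropy} applied to the ergodic component $\theta^x$ with $P_2\theta^x$ playing the role of $\rho$. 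Hence $\theta^x = \lambda\times \rho^{P_2 x}$ for $\theta$-a.e.\ $x$.

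Integrating against $\theta$ and using $(P_2)_\ast \theta=\rho$ with the ergodic decomposition $\rho=\int \rho^y\,d\rho(y)$,
\[
\theta \;=\; \int \theta^x\, d\theta(x) \;=\; \int \lambda\times \rho^{P_2 x}\, d\theta(x) \;=\; \int \lambda\times \rho^y\, d\rho(y) \;=\; \lambda\times \rho,
\]
where the last equality is immediate on product sets. The only subtlety in the argument is the identification $P_2\theta^x=\rho^{P_2 x}$ and making sure the Abramov--Rokhlin formula and the upper bound on relative entropy are applied to the ergodic component $\theta^x$ rather than to $\theta$ itself; no genuine obstacle is expected beyond this bookkeeping, since all tools are already collected in the Preliminaries.
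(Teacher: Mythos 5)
Your argument is correct and follows essentially the same strategy as the paper: apply the Ledrappier--Young formula of Theorem~\ref{Theorem Led young}, combine with exact dimensionality of $\rho$ via Theorem~\ref{Theorem 9.1}, squeeze the resulting entropy expression, and conclude by applying Lemma~\ref{Lemma entropy} to each ergodic component. The one real point of divergence is that you invoke the measure-level identification $P_2\theta^x = \rho^{P_2 x}$ up front --- which is indeed true and standard (a $\theta$-typical $x$ is generic for $\theta^x$, hence $P_2 x$ is simultaneously generic for $P_2\theta^x$ and for $\rho^{P_2 x}$, and an ergodic measure is determined by any of its generic points) --- and then pinch the pointwise dimension and relative entropy directly. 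The paper deliberately avoids asserting this identification: it only derives the weaker entropy identity $h(P_2\theta^x, T_n) = h(\rho^{P_2 x}, T_n)$, and does so by an integrated chain of inequalities (its equations~\eqref{Eq 3}--\eqref{Eq 7}), rather than pointwise. Both routes are sound; yours is a bit shorter at the cost of having to justify the identification, which you rightly flag as the only subtlety, and it would be worth writing out the generic-point argument in full. One minor bookkeeping slip: the bound $\dim\rho^{P_2 x}\leq\dim\rho$ should be cited to \eqref{Eq upper dim of inv measu} of Theorem~\ref{Theorem 9.1} (the ergodic-component formula), not to \eqref{Eq upper dim} (the local-dimension formula), though the conclusion is the same once exact dimensionality is used.
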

\begin{proof}
By equation  \eqref{Eq lower dim}, and by Theorem \ref{Theorem Led young}
\begin{eqnarray} \label{Eq 1}
\notag 1+ \dim \rho &=& \dim \theta \\
\notag &= &\text{ess-inf}_{x\sim \theta} \dim (\theta,x)\\
\notag &=& \text{ess-inf}_{x\sim \theta} \frac{h(\theta^x, T_m\times T_n)-h(( P_2 \theta ) ^{P_2  x}, T_n)}{\log m}+\frac{ h(( P_2 \theta ) ^{P_2  x}, T_n)}{\log n}\\
  &=& \text{ess-inf}_{x\sim \theta} \frac{h(\theta^x, T_m\times T_n)-h(\rho ^{P_2  x}, T_n)}{\log m}+\frac{ h(\rho ^{P_2  x}, T_n)}{\log n}
\end{eqnarray}
(recall that $\theta^x$ and $\rho^{P_2 x}$ denote the corresponding ergodic components of $\theta$ and of $\rho$, respectively).

Now,  by  Theorem \ref{Theorem 9.1}, and since $\rho$ has exact dimension
\begin{equation*} 
\text{ess-sup}_{x\sim \theta} \dim \left( \rho ^{ P_2x} \right) = \text{ess-sup}_{y \sim \rho} \dim \left( \rho ^{y} \right) = \overline{\dim} \rho = \dim \rho = \text{ess-inf}_{x \sim \theta} \dim \left( \rho ^{P_2 x} \right).
\end{equation*}
So, for $\theta$ almost every $x$ we have
\begin{equation} \label{Eq 2}
\frac{ h(\rho ^{P_2  x}, T_n)}{\log n} = \dim \rho^{P_2 x}  = \dim \rho  .
\end{equation}
Combining \eqref{Eq 2} with \eqref{Eq 1}, we find that
\begin{equation} \label{Eq 3}
1= \text{ess-inf}_{x\sim \theta} \frac{h(\theta^x, T_m\times T_n)-h(\rho ^{P_2  x}, T_n)}{\log m}.
\end{equation}
Therefore, by \eqref{Eq 3}, the formula for entropy as an average over ergodic components, the Abramov-Rokhlin Lemma, and the formula for entropy as the average of conditional measures (as in Lemma \ref{Lemma entropy}), we have
\begin{eqnarray*}
\log m &\leq & \int \left( h(\theta^x, T_m\times T_n)-h(\rho ^{ P_2x}, T_n) \right) d \theta (x)\\
 &=& \int h(\theta^x, T_m\times T_n)d \theta (x) - \int h(\rho ^{ P_2x}, T_n) d \theta (x)\\
&=& h(\theta,T_m \times T_n) -\int h(\rho ^{ y}, T_n) d \rho (y)\\
&=& h(\theta,T_m \times T_n) -h(\rho,T_n)\\
&=& h(\theta, T_m \times T_n | \mathcal{E}) \\
&\leq & \log m
\end{eqnarray*}
where $\mathcal{E}$ be the invariant sigma algebra that corresponds to the second coordinate of $\mathbb{T}^2$. Thus, we have that $\theta$ almost surely,
\begin{equation} \label{Eq 4}
\frac{h(\theta^x, T_m\times T_n)-h(\rho ^{ P_2x}, T_n)}{\log m} =1, \quad \text{ and } h(\theta, T_m \times T_n | \mathcal{E})=\log m.
\end{equation}

Now, \eqref{Eq 4} and the Abramov-Rokhlin Lemma imply that $\theta$ almost surely
\begin{equation} \label{Eq 5}
\log m + h(\rho ^{ P_2x}, T_n) = h(\theta^x, T_m\times T_n) = h(\theta^x, T_m \times T_n | \mathcal{E}) + h( P_2 \theta ^x, T_n).
\end{equation}
By \eqref{Eq 4} and the formula for entropy and convex combinations,
\begin{equation*}
\log m = h(\theta, T_m \times T_n | \mathcal{E}) = \int h(\theta^x, T_m \times T_n | \mathcal{E}) d \theta (x).
\end{equation*}
Since $0\leq h(\theta^x, T_m \times T_n | \mathcal{E}) \leq \log m$ almost surely, we must have $h(\theta^x, T_m \times T_n | \mathcal{E}) = \log m$ almost surely. By this equality and \eqref{Eq 5} we see that for $\theta$ almost every $x$,
\begin{equation} \label{Eq 7}
h( P_2 \theta ^x, T_n)=  h(\rho ^{ P_2x}, T_n).
\end{equation}

Finally, by \eqref{Eq 7} and \eqref{Eq 5}, 
\begin{equation*}
 h(\theta^x, T_m \times T_n) = \log m +h(P_2 \theta^x, T_n) \quad \text{ for almost every x}.
\end{equation*}
By Lemma \ref{Lemma entropy}, almost every ergodic component $\theta^x$  equals $\lambda\times P_2 \theta^x$. Thus,
\begin{equation*}
\theta = \int \theta^x d \theta (x) = \int \lambda \times P_2 \theta^x  d \theta (x) = \lambda \times \left( \int P_2 \theta^x d\theta (x) \right) = \lambda \times P_2 \left( \int \theta^x d\theta(x) \right)=  \lambda\times \rho. 
\end{equation*}
\end{proof}

Next, we make a short digression to discuss the relation between the conditional measures of a convolution of measures, and the conditional measures of the individual measures convolved, in some special cases.  In the following, the convolution of the two measures on the unit square $[0,1]^2$ is understood to take place in $\mathbb{R}^2$. For a measure $\nu \in \mathcal{P}(\mathbb{R}^2)$, Let $\nu = \int \nu_y dP_2 \nu (y)$ be the disintegration of $\nu$ with respect to the projection $P_2$.

\begin{Claim} \label{Claim conditionals} 
Let $\theta,\nu \in \mathcal{P}([0,1]^2)$ be two measure such that $\theta = \tau \times \alpha$, where the measure $\alpha$ is a convex combination of finitely many atomic measures. Then for $P_2 (\nu*\theta)$ almost every $z$, the conditional measure $(\nu*\theta)_z$ with respect to the projection $P_2$ is a finite convex combination of measures of the form $\nu_{z-z_i}*(\tau\times \delta_{z_i})$, where $z_i$ is an atom of $\alpha$ and $\nu_{z-z_i}$ is a conditional measure of $\nu$ with respect to the projection $P_2$. 
\end{Claim}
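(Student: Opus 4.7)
The plan is to reduce to the case of a single atom by linearity, and then to compute the fiber disintegration of $\nu * (\tau \times \delta_{z_i})$ directly. Write $\alpha = \sum_{i=1}^k p_i \delta_{z_i}$ with $p_i \geq 0$, $\sum p_i = 1$, so that
\begin{equation*}
\theta = \tau \times \alpha = \sum_{i=1}^k p_i \, (\tau \times \delta_{z_i}),
\qquad
\nu * \theta = \sum_{i=1}^k p_i \, \mu_i, \qquad \mu_i := \nu * (\tau \times \delta_{z_i}),
\end{equation*}
using the bilinearity of convolution. Since each $\mu_i$ is obtained from $\nu$ by convolving with a measure concentrated on the horizontal line $\{y=z_i\}$, one has $P_2 \mu_i = (P_2 \nu) * \delta_{z_i}$, the translate of $P_2 \nu$ by $z_i$. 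In particular each $P_2 \mu_i$ is absolutely continuous with respect to $g := P_2(\nu * \theta) = \sum_i p_i \, P_2 \mu_i$; let $f_i := d(P_2 \mu_i)/dg$, so that $\sum_i p_i f_i = 1$ $g$-a.e.

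The central step is to show that for $g$-a.e.\ $z$,
\begin{equation*}
(\mu_i)_z = \nu_{z-z_i} * (\tau \times \delta_{z_i}).
\end{equation*}
To see this, view $\mu_i$ as the law of $X + Y$ where $X \sim \nu$, $Y \sim \tau \times \delta_{z_i}$ are independent; then the second coordinate $(X+Y)_2 = X_2 + z_i$, so conditioning on $(X+Y)_2 = z$ is the same as conditioning on $X_2 = z - z_i$ (and leaves $Y$ unchanged, by independence). Under this conditioning $X$ is distributed as $\nu_{z-z_i}$, hence $X + Y$ is distributed as $\nu_{z-z_i} * (\tau \times \delta_{z_i})$. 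One should sanity-check that the right-hand side is supported on the fiber $\{P_2 = z\}$, which is immediate since $\nu_{z-z_i}$ sits on $\{P_2 = z-z_i\}$ and $\tau \times \delta_{z_i}$ sits on $\{P_2 = z_i\}$, so their convolution sits on $\{P_2 = z\}$.

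To finish, I would assemble the disintegration of the sum: for any Borel $A \subseteq [0,1]^2$,
\begin{equation*}
(\nu * \theta)(A) = \sum_i p_i \int (\mu_i)_z(A) \, d(P_2 \mu_i)(z) = \int \left[ \sum_i p_i f_i(z) \, (\mu_i)_z(A) \right] dg(z),
\end{equation*}
so by uniqueness of disintegrations $(\nu*\theta)_z = \sum_i p_i f_i(z) \, \nu_{z-z_i} * (\tau \times \delta_{z_i})$ for $g$-a.e.\ $z$, which is the desired finite convex combination since the coefficients are nonnegative and sum to $1$. The only mildly subtle point is the fibering of $\mu_i$ through the product structure $\tau \times \delta_{z_i}$; this is conceptually clear from the probabilistic description above but should be written carefully using Fubini, by pushing the product measure $\nu \times (\tau \times \delta_{z_i})$ through $(u,v)\mapsto u+v$ and disintegrating. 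Everything else is bookkeeping with Radon--Nikodym derivatives of the marginals.
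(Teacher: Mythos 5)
Your proposal is correct and follows essentially the same route as the paper: both split $\nu*\theta = \sum_i p_i\,\nu*(\tau\times\delta_{z_i})$ by bilinearity, identify the single-atom conditional $(\nu*(\tau\times\delta_{z_i}))_z = \nu_{z-z_i}*(\tau\times\delta_{z_i})$, and reassemble via the Radon--Nikodym derivatives of the $P_2$-marginals (your $f_i$ are exactly the paper's $\frac{d\nu*(\tau\times\delta_{z_i})|_{\mathcal C}}{d(\nu*\theta)|_{\mathcal C}}$). You are in fact slightly more explicit than the paper on two small points: the paper simply declares the single-atom case ``straightforward'' where you supply the probabilistic/Fubini argument, and you observe explicitly that $\sum_i p_i f_i = 1$ so the result is a genuine convex combination.
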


\begin{proof}
If $\alpha = \delta_y$ for some $y$ then the result is straightforward. For the general case, notice that if $\theta = \tau\times \alpha$ and $\alpha = \sum p_i \delta_{z_i}$ then by the linearity of both convolution and of taking product measures
\begin{equation*}
\nu * \theta = \sum \nu*(\tau \times (p_i \cdot \delta_{z_i})) = \sum p_i \cdot \nu*(\tau \times  \delta_{z_i}).
\end{equation*}
In general, if $\mu = \mu_1\cdot p_1 + \mu_2\cdot p_2$ is a convex combination of probability measures and $\mathcal{C}$ is some sigma algebra,  then the following holds almost surely for every $f\in L^1$:
\begin{equation*}
\mathbb{E}_{\mu} (f|\mathcal{C}) = p_1 \cdot \mathbb{E}_{\mu_1}(f|\mathcal{C})\cdot \frac{d \mu_1}{d \mu}+p_2 \cdot \mathbb{E}_{\mu_2} (f|\mathcal{C})\cdot \frac{d \mu_2}{d \mu}.
\end{equation*}
We remark that in the above equation,  the Radon-Nikodym derivatives $\frac{d \mu_i}{d \mu}$ in fact stand for the Radon-Nikodym derivatives when the measures are restricted to the sigma-algebra $\mathcal{C}$, i.e. $\frac{d \mu_i|_\mathcal{C}}{d \mu|_{\mathcal{C}}}$. However, we suppress this in our notation. So, for $\mathcal{B}_2$ the Borel sigma algebra on the $y$-axis, for every $f\in L^1$ and  for almost every $z$
\begin{eqnarray*}
\int f d(\nu * \theta)_z &=& \mathbb{E}_{\nu * \theta } (f|P_2 ^{-1} \mathcal{B}_2)(z) \\
& = &\sum_i p_i \cdot \mathbb{E}_{\nu*(\tau \times  \delta_{z_i})}(f|P_2 ^{-1} \mathcal{B}_2) (z)\cdot \frac{d \nu*(\tau \times \delta_{z_i})}{d \nu * \theta} (z) \\
& = & \sum_i p_i \cdot  \int f d \left( \nu*(\tau \times  \delta_{z_i}\right))_z \cdot \frac{d \nu*(\tau \times \delta_{z_i})}{d \nu * \theta} (z) \\
& = & \sum_i p_i \cdot  \int f d \left( \nu_{z-z_i}*(\tau \times  \delta_{z_i}) \right) \cdot \frac{d \nu*(\tau \times  \delta_{z_i})}{d \nu * \theta} (z) \\
& = &  \int f d \left( \sum_i  \left( \nu_{z-z_i}*(\tau \times \delta_{z_i}) \right)\cdot p_i \cdot \frac{d \nu*(\tau \times \delta_{z_i})}{d \nu * \theta} (z)  \right)
\end{eqnarray*}
It follows that almost surely,
\begin{eqnarray*}
(\nu * \theta)_z &=& \sum_i \nu_{z-y_i}*(\tau \times  \delta_{z_i})\cdot p_i \cdot  \frac{d \nu*(\tau \times  \delta_{z_i})}{d \nu * \theta} (z)  
\end{eqnarray*}
\end{proof}

The following Claim, which forms the main result of this section, is also the key for our argument. 
\begin{Claim}  \label{Claim rigidty}
Let $\nu \in \mathcal{P}([0,1]^2)$ be a $T_m \times T_n$ invariant measure such that:
\begin{enumerate}
\item We have $P_2 \nu = \rho$, where $\rho$ is a continuous ergodic measure, and  $P_1\nu = \lambda$.

\item There exists some $\delta>0$ such that:

 For every probability measure $\tau\in \mathcal{P}([0,1])$  with $\dim \tau \geq 1- \delta$, for $ \rho$ almost every $y$, we have $\dim \tau*P_1 \nu_y =1$.
\end{enumerate}
Then $\nu = \lambda\times \rho$.
\end{Claim}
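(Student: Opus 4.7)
The plan is to apply Claim \ref{Claim unqiue} to a smoothed version $\xi := \nu * (\tau \times \delta_0)$ of $\nu$, where $\tau$ is a carefully chosen $T_m$-invariant probability measure on $[0,1]$ of dimension $\geq 1 - \delta$. Hypothesis (2) will boost the conditional fibers of $\xi$ to dimension $1$, forcing $\dim \xi = 1 + \dim \rho$; Claim \ref{Claim unqiue} will then identify $\xi$ with $\lambda \times \rho$. A Fourier deconvolution step will recover $\nu = \lambda \times \rho$ itself, provided $\tau$ is chosen so that all its nonzero Fourier coefficients are non-vanishing.

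The first task is constructing $\tau$. Let $\eta$ be the Cantor-Lebesgue measure in base $m$ corresponding to a probability vector of the form $(p_0, p_1, 0, \ldots, 0)$ with $p_0, p_1 > 0$, $p_0 + p_1 = 1$, and $p_0 \neq p_1$. Formula \eqref{Eq fouier cantor leb} then yields $\hat{\eta}(k) = \prod_{j \geq 1} (p_0 + p_1 e^{2 \pi i k/m^j})$, and each factor has modulus at least $|p_0 - p_1| > 0$, so $\hat{\eta}(k) \neq 0$ for every $k \in \mathbb{Z}$. Moreover $\eta$ is $T_m$-invariant, weakly mixing, and satisfies $\dim \eta > 0$, so Theorem \ref{Theorem Elon conv} gives $\dim \eta^{*K} \to 1$. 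Fix $K$ with $\dim \eta^{*K} \geq 1 - \delta$ and set $\tau := \eta^{*K}$. Then $\tau$ is $T_m$-invariant with $\dim \tau \geq 1 - \delta$, and $\hat{\tau}(k) = \hat{\eta}(k)^K \neq 0$ for every $k \neq 0$.

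Next, form $\xi := \nu * (\tau \times \delta_0)$. Since $T_m \tau = \tau$ and $T_n \delta_0 = \delta_0$, $\xi$ is $T_m \times T_n$-invariant; furthermore $P_2 \xi = \rho * \delta_0 = \rho$, which is exact dimensional by Theorem \ref{Theorem 9.1}, and $P_1 \xi = \lambda * \tau = \lambda$. Claim \ref{Claim conditionals} applied with $\alpha = \delta_0$ (a single atom at $0$) gives the fibers $\xi_y = (P_1 \nu_y * \tau) \times \delta_y$, and hypothesis (2) applied to this $\tau$ yields $\dim \xi_y = \dim(P_1 \nu_y * \tau) = 1$ for $\rho$-a.e.\ $y$. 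Lemma \ref{Lemma 3.5}(1) then gives $\dim \xi \geq 1 + \dim \rho$, while Lemma \ref{Lemma 3.5}(2) gives $\dim \xi \leq \dim_P \lambda + \dim \rho = 1 + \dim \rho$, so equality holds and Claim \ref{Claim unqiue} applies: $\xi = \lambda \times \rho$.

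Comparing the fibers of $\xi = \lambda \times \rho$ gives $P_1 \nu_y * \tau = \lambda$ for $\rho$-a.e.\ $y$. Taking Fourier coefficients via \eqref{Eq fourier conv}, $\widehat{P_1 \nu_y}(k) \hat{\tau}(k) = 0$ for every $k \neq 0$, and since $\hat{\tau}(k) \neq 0$ we obtain $\widehat{P_1 \nu_y}(k) = 0$ for all $k \neq 0$. Combined with $\widehat{P_1 \nu_y}(0) = 1$, the one-dimensional analogue of Lemma \ref{Lemma fou} yields $P_1 \nu_y = \lambda$ for $\rho$-a.e.\ $y$, so $\nu_y = \lambda \times \delta_y$ and $\nu = \int \nu_y\, d\rho(y) = \lambda \times \rho$. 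The main technical obstacle is arranging the three simultaneous constraints on $\tau$---large dimension, $T_m$-invariance, and totally non-vanishing Fourier transform---after which the argument reduces to previously established machinery in the paper.
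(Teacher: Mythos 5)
Your proof is correct and takes a genuinely cleaner route than the paper's. The paper argues by contradiction: it assumes $\nu\neq\lambda\times\rho$, extracts a specific mismatched Fourier coefficient $(i,j)$ with $i,j\neq 0$, and builds $\tau$ and a two-atom $T_{n^k}$-periodic measure $\alpha$ tailored so that $\hat\tau(i)\neq 0$ and $\hat\alpha(j)\neq 0$ (this forces working with $T_{m^k}\times T_{n^k}$ for a suitable $k$ rather than $T_m\times T_n$). Your variant takes $\alpha=\delta_0$, which is $T_n$-invariant outright, so $\xi=\nu*(\tau\times\delta_0)$ is directly $T_m\times T_n$-invariant and Claim~\ref{Claim unqiue} applies at the original level; you then recover $\nu$ from $\xi=\lambda\times\rho$ by Fourier deconvolution on each fiber, using $\hat\tau(k)\neq 0$ for \emph{all} $k$ rather than at a single frequency. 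The price you pay (total non-vanishing of $\hat\tau$) is anyway delivered by the same Cantor--Lebesgue construction, and the gain is that the $k$-adjustment, the two-point periodic orbit, and the contradiction scaffolding all drop out. This is a real simplification of the argument in Section~\ref{Section rig}.

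One small gap to patch: the step ``each factor of the Euler product has modulus $\geq|p_0-p_1|>0$, so $\hat\eta(k)\neq 0$'' does not follow from positivity of the individual factors alone (compare $\prod_j\tfrac12=0$). You also need convergence of the infinite product to a nonzero limit, which holds here because $|p_0+p_1e^{2\pi ik/m^j}-1|\leq 2\pi p_1|k|/m^j$ is summable in $j$; combined with each factor being nonzero, the product converges absolutely to a nonzero value. The paper handles this point by appealing to Proposition 3.1 of Chapter 5 in \cite{Stein2003complex}, and you should either cite the same or insert the one-line summability estimate. With that repair, the proof stands.
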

\begin{proof}
Suppose towards a contradiction that $\nu \neq \lambda\times \rho$. Let us first identify $\nu$ with the corresponding measure on $\mathbb{T}^2$ (i.e. we project $\nu$ to $\mathbb{T}^2$ but we keep the notation $\nu$), which cannot be $\lambda\times \rho$ either.  Then, by Lemma \ref{Lemma fou}, there exists $(i,j)\in \mathbb{Z}^2 \setminus \lbrace (0,0) \rbrace$ such that 
$$\hat{\nu}(i,j)\neq \widehat{\lambda\times \rho} (i,j).$$
Now, as $P_2 \nu =  \rho$ and  $P_1 \nu=\lambda$ we must have $i,j\neq 0$, since if e.g. $i=0$ then, using \eqref{Eq fouier prod},
$$ \hat{\nu}(0,j) = \widehat{P_2 \nu}(j) = \hat{\rho}(j) =1\cdot \hat{\rho}(j) = \hat{\lambda}(0)\hat{\rho}(j)=\widehat{\lambda\times \rho} (0,j)$$
a contradiction. Thus, we may assume both $i,j\neq 0$, and since $\hat{\lambda} (i)=0$ we have $\hat{\nu}(i,j)\neq 0$ by \eqref{Eq fouier prod}.

Now, let $k\in \mathbb{N}$ be such that $2|j|<n^k+1$. We construct two measures $\tau, \alpha \in \mathcal{P}(\mathbb{T})$  such that:
\begin{enumerate}
\item  The measure $\alpha$ is a uniform measure on a finite (periodic) $T_{n^k}$ orbit such that $\hat{\alpha}(j)\neq 0$.

To find such a measure, we take the $T_{n^k}$ periodic orbit $\lbrace x_0,x_1 \rbrace$ where $x_0 = \frac{1}{n^{2k} -1}$ and $x_1 = \frac{n^k}{n^{2k}-1}$. Define a measure $\alpha = \frac{1}{2}\delta_{x_0} + \frac{1}{2}\delta_{x_1}$ on this orbit. Then
\begin{equation*}
\hat{\alpha}(x) = \frac{1}{2}\cdot e^{2 \pi i \frac{x}{n^{2k}-1}} ( 1+ e^{2 \pi i \frac{x(n^k-1)}{n^{2k}-1}})
\end{equation*}
Now, if $\hat{\alpha}(j)=0$ then $e^{2 \pi i \frac{j(n^k-1)}{n^{2k}-1}} =-1$, which can only happen if $2 \frac{j(n^k-1)}{n^{2k}-1} \in  1 + 2\mathbb{Z}$. However, $2 \frac{j(n^k-1)}{n^{2k}-1} = 2  \frac{j}{n^{k}+1}$ and $|\frac{2j}{n^k +1}|<1$. Thus, it is impossible that $\hat{\alpha}(j)=0$.

\item The measure $\tau$ is $T_{m^k}$ invariant, $\dim \tau \geq 1- \delta$ and $\hat{\tau} (i)\neq 0$.

To find such a measure, let $\beta$ be the Cantor-Lebesgue measure with respect to base $m$ and the non-degenerate probability vector $(\frac{1}{3},\frac{2}{3},0,...,0)$ (see the end of Section \ref{Section dimension}).  Then $\beta$ is a weakly mixing $T_m$ invariant measure (a Bernoulli measure). By \eqref{Eq fouier cantor leb},
\begin{eqnarray*}
\hat{\beta} (x) &=& \prod_{j=1} ^\infty ( \frac{1}{3}+\frac{2}{3}\exp ( 2 \pi i x \frac{1}{m^j})) \\
&=& \prod_{j=1} ^\infty \left( 1+ \frac{2}{3}\cdot (\exp ( 2 \pi i x \frac{1}{m^j})-1) \right)
\end{eqnarray*}
By looking at the corresponding power series expansion, we see that for every $j$
\begin{equation*}
|\exp ( 2 \pi i x \frac{1}{m^j}) -1| = |\sum_{k=1} ^\infty \frac{(2 \pi i x \frac{1}{m^j})^k}{k!}| \leq \sum_{k=1} ^\infty \frac{(2\pi \frac{|x|}{m^j})^k}{k!} \leq \frac{1}{m^j}\cdot  \exp(2\pi |x|).
\end{equation*}
By Proposition 3.1 in Chapter 5 of \cite{Stein2003complex}, we conclude that $\hat{\beta} (i)=0$ if and only if one of its factors has a zero at $i$. Since $0\neq i\in \mathbb{Z}\subset \mathbb{R}$, this clearly does not happen, and we conclude that $\hat{\beta} (i) \neq 0$.

Also, notice that $\dim \beta = \frac{H(\frac{1}{3},\frac{2}{3})}{\log m} >0$, where $H(p_1,p_2)$ is the Shannon entropy of the probability vector $(p_1,p_2)$. Finally, by Theorem \ref{Theorem Elon conv}, there exists some $q\in \mathbb{N}$ such that $\dim \beta^{*q} > 1-\delta$, where by $\beta^{*q}$ we mean that we convolve $\beta$ with itself $q$ times. Recalling \eqref{Eq fourier conv}, we see that $\widehat{\beta^{*q}}(i)=\left( \hat{\beta}  (i) \right)^q \neq 0$. Thus, we may take $\tau = \beta^{*q}$. Notice that $\tau$ is $T_m$ invariant, so it is also $T_{m^k}$ invariant. 
\end{enumerate}

Thus, by \eqref{Eq fourier conv} and \eqref{Eq fouier prod},  the Fourier coefficients of the measure $\nu*(\tau\times \alpha) \in \mathcal{P}(\mathbb{T}^2)$ satisfy
\begin{equation*}
\widehat{\left( \nu*(\tau\times \alpha) \right)} (i,j) = \hat{\nu}(i,j)\cdot \widehat{(\tau \times \alpha)}(i,j) = \hat{\nu}(i,j)\cdot \hat{\tau}(i) \cdot \hat{\alpha}(j)\neq 0.
\end{equation*}
Therefore, as $i\neq0$, we have by Lemma \ref{Lemma fou}
\begin{equation} \label{Eq contra}
\nu*(\tau\times \alpha) \neq \lambda\times (\rho*\alpha),
\end{equation}
since 
$$\widehat{\left( \lambda\times (\rho*\alpha) \right)}(i,j) = \hat{\lambda}(i)\cdot \widehat{\rho*\alpha}(j)=  0.$$

On the other hand, let us now  lift all our measures to corresponding measures on $[0,1]$ and $[0,1]^2$. Since $\nu$ is already defined on the unit square, we take this representative for our lift. Since $\tau$ cannot be atomic we can take our lift as the corresponding measure on $[0,1]$, and for the measure $\alpha$ we can take essentially the same measure.  By Claim \ref{Claim conditionals}, the conditional measures of $\nu*(\tau\times \alpha)$ with respect to the projection $P_2$  are almost surely finite convex combinations of measures of the form $\nu_{y-x_i} *(\tau\times \delta_{x_i})$, where $i=0,1$ are the atoms of the measure $\alpha$, with weights $p_i (y)$ for $i=0,1$. So, for $P_2 \left( \nu*(\tau\times \alpha) \right)$ almost every $y$,
\begin{eqnarray*}
\dim \left( \nu*(\tau\times \alpha) \right)_y &=& \dim \left( \nu_{y-x_0} *(\tau\times \delta_{x_0})\cdot p_0 (y)+ \nu_{y-x_1} *(\tau\times \delta_{x_1})\cdot p_1 (y) \right) \\
&=& \min \lbrace \dim \nu_{y-x_0} *(\tau\times \delta_{x_0})\cdot p_0 (y), \quad \dim \nu_{y-x_1} *(\tau\times \delta_{x_1})\cdot p_1 (y) \rbrace \\
&=& \min \lbrace \dim \nu_{y-x_0} *(\tau\times \delta_{x_0}), \quad \dim \nu_{y-x_1} *(\tau\times \delta_{x_1}) \rbrace \\
&\geq & \min \lbrace \dim P_1 \left( \nu_{y-x_0} *(\tau\times \delta_{x_0}) \right), \quad \dim P_1 \left( \nu_{y-x_1} *(\tau\times \delta_{x_1}) \right) \rbrace \\
&\geq & \min \lbrace \dim \left( P_1  \nu_{y-x_0} \right) *\tau, \quad \dim \left( P_1  \nu_{y-x_1} \right) *\tau \rbrace \\
&= & 1 \\
\end{eqnarray*} 
where we have used condition (2) in the statement of the Claim, the lower bound on $\dim \tau$ and  Proposition \ref{Prop properties of dim}. Since the opposite inequality is always true, we conclude that
\begin{equation} \label{Eq condiotnals}
\dim \left( \nu*(\tau\times \alpha) \right)_y = 1, \text{ for } P_2 (\nu*(\tau\times \alpha)) \text{ almost every } y.
\end{equation}

Since $P_2 (\nu*(\tau\times \alpha)) = \rho *\alpha$, and by \eqref{Eq condiotnals}, we see via Lemma \ref{Lemma 3.5} part (1) that 
$$\dim \nu*(\tau\times \alpha)\geq 1+\dim \rho*\alpha.$$
 On the other hand, by  part (2) of Lemma \ref{Lemma 3.5}, and since $P_1 \nu = \lambda$, 
$$\dim \nu*(\tau\times \alpha)\leq \dim_p \lambda*\tau +\dim \rho*\alpha = 1+\dim \rho*\alpha.$$ 
We conclude that $\dim \nu*(\tau\times \alpha) = 1+\dim \rho*\alpha$.

Finally, we project $\nu*(\tau\times \alpha)$ to $\mathbb{T}^2$. Since this projection is a local diffeomorphism, it preserves dimension. Thus, the convolved measure $\nu*(\tau\times \alpha)$, with the ambient group being $\mathbb{T}^2$, has dimension $1+\dim\rho*\alpha$. Moreover, by Theorem \ref{Theorem 9.1}, since $\rho$ is ergodic it is exact dimensional. Since $\alpha$ is a discrete measure (supported on two atoms), the convolution $\rho * \alpha$ remains exact dimensional (Proposition \ref{Prop properties of dim}).

 Therefore, we may apply Claim \ref{Claim unqiue} for the measure $\nu*(\tau\times \alpha)$, since this is a $T_{m^k} \times T_{n^k}$ invariant measure (as the convolution of such measures), and the assumptions on the dimension of $\nu*(\tau\times \alpha)$ and on $P_2 (\nu*(\tau\times \alpha))=\rho *\alpha$  are met by the previous paragraph. Thus, we may conclude that $\nu*(\tau\times \alpha) = \lambda\times (\rho*\alpha) $. Via \eqref{Eq contra}, this yields our desired contradiction.
\end{proof}

\section{Proof  of Theorem \ref{Conjecture}} \label{Section rep}

Let $\mu$ be as in Theorem \ref{Conjecture}, and let $\nu$ be some accumulation point of the  sequence of measures as in \eqref{Eq I1} (where we pick a typical $x$ according to $\mu$), along a subsequence $N_k$. Our goal is to show that $\nu=\lambda\times \rho$, and we shall do this by showing that $\nu$ meets the conditions of Claim \ref{Claim rigidty}.

By our assumptions and  Theorem 1.1 in \cite{hochmanshmerkin2015}  it follows that $P_1 \nu = \lambda$ and $P_2 \nu = \rho$. Thus, $\nu$ satisfies condition (1) in Claim \ref{Claim rigidty}. Notice that this implies that $\nu$ gives zero mass to the points of discontinuouty of $T_m \times T_n$.  So, $\nu$ is $T_m \times T_n$ invariant. For the second condition of Claim \ref{Claim rigidty}, we require the following analogue of Theorem 5.1 in \cite{hochmanshmerkin2015}. Recall that $P$ is the EFD generated by $\mu$ (see Section \ref{Section EFD}).

\begin{Claim} \label{Theorem 5.1} (Conditional integral representation)
For $P_2 \nu = \rho$ almost every $y$ there is a probability space $(\Omega, \mathcal{F}, Q(y))$ and measurable functions 
$$c:\Omega\rightarrow (0,\infty),\quad x:\Omega \rightarrow [-1,1], \quad  \eta:\Omega \rightarrow \mathcal{P}([-1,1])$$
such that:
\begin{enumerate}
\item \begin{equation*}
P_1 \nu_y = \int c (\omega) \cdot (\delta_{x (\omega)}*\eta (\omega))|_{[0,1]} dQ(y)(\omega)
\end{equation*}

\item Let $P_y$ denote the distribution of random variable $\eta$ as above. Then $P = \int P_y d \rho(y)$.
\end{enumerate}

\end{Claim}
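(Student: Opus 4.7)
The plan is to adapt the strategy of Theorem 5.1 in \cite{hochmanshmerkin2015} to the two-dimensional setting, and then perform a disintegration over the second coordinate. Fix a $\mu$-typical point $x$, and recall that $\nu$ is the weak-$*$ limit along $N_k$ of the empirical measures in \eqref{Eq I1}. First I would apply Theorem \ref{Theorem 2.1} to $\alpha := \Delta\mu$ with the generating partition $\mathcal{A} := \mathcal{D}_m \times \mathcal{D}_n$ of $\mathbb{T}^2$ under $T_m \times T_n$. Since $P_1\nu = \lambda$ and $P_2\nu = \rho$ are both continuous, $\nu$ assigns zero mass to $\partial A$ for every $A \in \mathcal{A}^k$ and every $k$, so Theorem \ref{Theorem 2.1} gives
\[
\nu \;=\; \lim_{k\to\infty} \frac{1}{N_k}\sum_{i=0}^{N_k-1} \alpha_{\mathcal{A}^i(x,x)}.
\]

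Next I would describe $\alpha_{\mathcal{A}^i(x,x)}$ for good indices $i$, i.e.\ $x \notin A_i$, so that $\mathcal{D}_{m^i}(x) \subseteq \mathcal{D}_{n^i}(x)$. An elementary computation shows that $\alpha_{\mathcal{A}^i(x,x)}$ is the pushforward of $\mu_{\mathcal{D}_{m^i}(x)}$ under the affine map $u \mapsto (u,\, s_i + (n/m)^i u)$, where $s_i := n^i \min \mathcal{D}_{m^i}(x) \bmod 1$ lies within $(n/m)^i$ of $T_n^i x$. Since $(n/m)^i \to 0$ and, by Claim \ref{Claim stable intersections}, the bad indices $\{i : x \in A_i\}$ have density zero, these measures are weak-$*$ indistinguishable in the Cesaro average from $\mu_{\mathcal{D}_{m^i}(x)} \times \delta_{T_n^i x}$, yielding
\[
\nu \;=\; \lim_{k\to\infty} \frac{1}{N_k} \sum_{i=0}^{N_k-1} \mu_{\mathcal{D}_{m^i}(x)} \times \delta_{T_n^i x}.
\]
The key identity (following the Hochman--Shmerkin argument) is $\mu_{\mathcal{D}_{m^i}(x)} = c_i \cdot (\delta_{T_m^i x} * \mu_{x,\, i\log m})|_{[0,1]}$, which expresses the cell-based zoom as a translate-and-restrict of the scenery at scale $i\log m$.

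Then I would consider the empirical distribution on the compact Polish space $[0,1) \times \mathcal{M}^{\square} \times \mathbb{T}$ of the triples
\[
\omega_i \;:=\; \bigl(T_m^i x,\ \mu_{x,\, i\log m},\ T_n^i x\bigr), \qquad i = 0, \ldots, N_k-1.
\]
Under assumption (2) of Theorem \ref{Conjecture}, Proposition \ref{Prop spec assumption} gives equidistribution of the second coordinate for $P$; assumption (3) gives equidistribution of the third for $\rho$; and the main result of \cite{hochmanshmerkin2015} recalled in \eqref{Eq HS} gives equidistribution of the first for $\lambda$. Passing to a further subsequence of $N_k$ (which does not change $\nu$), these joint empirical distributions converge to some $Q \in \mathcal{P}([0,1) \times \mathcal{M}^{\square} \times \mathbb{T})$ with marginals $\lambda$, $P$, and $\rho$. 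Proposition \ref{Prop tight} applied to $P$ ensures that the map $(u, \eta) \mapsto c(u,\eta) \cdot (\delta_u * \eta)|_{[0,1]}$, where $c(u,\eta)$ normalizes, is well-defined and continuous $Q$-almost surely, so one obtains
\[
\nu \;=\; \int c(u, \eta)\cdot \bigl((\delta_u * \eta)|_{[0,1]} \times \delta_y\bigr)\, dQ(u, \eta, y).
\]
Disintegrating $Q = \int Q(y)\, d\rho(y)$ over its third marginal and comparing with the disintegration of $\nu$ over $P_2\nu = \rho$, one reads off
\[
P_1\nu_y \;=\; \int c(u,\eta)\,(\delta_u * \eta)|_{[0,1]}\, dQ(y)(u, \eta),
\]
which is the desired representation with $\Omega := [0,1) \times \mathcal{M}^{\square}$ and $x(\omega), \eta(\omega)$ the two coordinate projections. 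Since the second marginal of $Q$ is $P$, we obtain $P = \int P_y\, d\rho(y)$ as required.

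The main obstacle I anticipate is the careful bookkeeping around the small tilt $(n/m)^i$ in the support of $\alpha_{\mathcal{A}^i(x,x)}$: for a test function whose discontinuities lie near $\partial D$ for some $D \in \mathcal{D}_{n^k}$, the approximation of $\alpha_{\mathcal{A}^i(x,x)}$ by $\mu_{\mathcal{D}_{m^i}(x)} \times \delta_{T_n^i x}$ may fail precisely when $T_n^i x$ is within $(n/m)^i$ of $\partial D$. This is exactly the situation controlled by Lemma \ref{Lemma bad indices}, which shows that such indices form a set of density zero, so they can also be discarded in the Cesaro average. A secondary technical point is to verify that the restriction and normalization functionals $(u,\eta) \mapsto c(u,\eta) \cdot (\delta_u * \eta)|_{[0,1]}$ are continuous enough on the support of $Q$ to justify passing the weak-$*$ limit inside the integral, which should follow from Proposition \ref{Prop tight} uniformly over $u$ in a compact subset of $[0,1)$.
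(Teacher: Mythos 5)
Your proposal is correct in outline and reaches the same conclusion, but it organizes the argument differently from the paper, and the difference is worth noting. The paper first forms the distribution $R$ on pairs $(\mu_{x,i\log m},\,T_n^i x)\in\mathcal{P}([-1,1])\times[0,1]$, disintegrates $R$ over $\rho$ to obtain $P_y := P_1 R_y$ (establishing part (2) of the Claim at the outset), and then separately derives the representation of $P_1\nu_y$ by conditioning $\nu$ on dyadic cells $P_2^{-1}\mathcal{D}_{2^p}(y)$, projecting via $P_1$, passing to distributions $Q_{\mathcal{D}_{2^p}(y)}$ on triples, and finally taking a limit in $p$ (justified via Proposition \ref{Prop tight} for tightness). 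Your version instead forms a \emph{single} joint empirical distribution $Q$ of the triples $(T_m^i x, \mu_{x,i\log m}, T_n^i x)$, takes one subsequential limit, and obtains the entire statement — both the representation of $P_1\nu_y$ and the formula $P=\int P_y\,d\rho(y)$ — in one stroke by disintegrating $Q$ over its third marginal. This is a cleaner route: it avoids the double limit in $k$ and $p$, and it replaces the cell-conditioning step (restriction to $P_2^{-1}\mathcal{D}_{2^p}(y)$, which is weak-$*$ discontinuous) with the observation that the tilted measures $\alpha_{\mathcal{A}^i(x,x)}$ and the product measures $\mu_{\mathcal{D}_{m^i}(x)}\times\delta_{T_n^i x}$ differ by $O((n/m)^i)$ in Wasserstein distance and hence have the same Cesàro limit against any fixed continuous test function. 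One consequence you seem not to have noticed: in your version Lemma \ref{Lemma bad indices} is actually unnecessary (your closing "main obstacle" paragraph misidentifies its role). It is needed in the paper precisely because the paper conditions on cells $\mathcal{D}_{2^p}(y)$, so the tilt of $\alpha_{\mathcal{A}^i(x,x)}$ near $\partial\mathcal{D}_{2^p}(y)$ can split mass across the cell boundary; your global disintegration sidesteps that entirely, and the remaining technical burden is just the $Q$-a.s.\ continuity of $(u,\eta)\mapsto c(u,\eta)\cdot(\delta_u*\eta)|_{[0,1]}$, which Proposition \ref{Prop tight} handles (you should also track that $Q$-typical $\eta$ are non-atomic, so the restriction map is continuous there). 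Two minor slips: $[0,1)$ should be $[0,1]$ or $\mathbb{T}$, and $\mathcal{M}^\square$ should be the compact space $\mathcal{P}([-1,1])$ for the tightness/subsequence argument, since $\mathcal{M}^\square$ is not weak-$*$ closed; the limit $Q$ is nonetheless supported there because its second marginal is $P$.
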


\begin{proof}
We dedicate the first part of the proof to finding a disintegration of $P$ according to the measure $\rho$. To this end, consider the following sequence of distributions $R_{N_k}\in \mathcal{P}( \mathcal{P}([0,1])\times [0,1])$, defined by
\begin{equation*}
R_{N_k} = \frac{1}{N_k} \sum_{i=0} ^{N_k-1} \delta_{(\mu_{x,i\log m}, T_n ^i x)}.
\end{equation*}
Let $R$ be some accumulation point of this sequence. Without the loss of generality, let us assume the limit already exists along the sequence $N_k$.  Then we may assume that $P_1 R = P$ and $P_2 R=\rho$, since we are considering a $\mu$ typical point $x$, making use of the fact that $\mu$ is pointwise generic under $T_n$ for $\rho$, and of the spectral condition on $P$ via Proposition \ref{Prop spec assumption}.

Next, we disintegrate the distribution $R$ via the projection $P_2$:
\begin{equation*}
R = \int R_y d\rho(y).
\end{equation*}
Applying the map $P_1$ to this disintegration, we see that
\begin{equation*}
P= P_1 R = \int P_1 R_y d \rho(y).
\end{equation*}
Thus, the family of measures $\lbrace P_1 R_y \rbrace$ forms our desired disintegration. 

Let us study this family of distributions a little further: It is well known (see e.g. \cite{furstenberg2008ergodic} or \cite{Simmons2012cond}) that for $\rho$ almost every $y$, we may write
\begin{equation*}
R_y = \lim_{p\rightarrow \infty} \frac{R(\cdot \cap P_2 ^{-1} (\mathcal{D}_{2^p} (y)) }{\rho(\mathcal{D}_{2^p} (y))}.
\end{equation*}
Therefore,
\begin{equation*}
P_1 R_y = \lim_{p\rightarrow \infty} \frac{P_1 R(\cdot \cap P_2 ^{-1} (\mathcal{D}_{2^p} (y)) }{\rho(\mathcal{D}_{2^p} (y))} = \lim_{p\rightarrow \infty} \lim_{k\rightarrow \infty} \frac{1}{\rho(\mathcal{D}_{2^p} (y))\cdot N_k} \sum_{\lbrace 0\leq i \leq N_k-1:\quad T_n ^i x \in \mathcal{D}_{2^p} (y)\rbrace} \delta_{\mu_{x,i\log m}}.
\end{equation*}
Finally, we note that for $\rho$ almost every $y$, for every $p$,
\begin{equation*}
\frac{R(\cdot \cap P_2 ^{-1} (\mathcal{D}_{2^p} (y)) }{\rho(\mathcal{D}_{2^p} (y))} \ll R.
\end{equation*}
Therefore, for $\rho$ almost every $y$, for every $p$,
\begin{equation} \label{Eq abs con}
\frac{P_1R(\cdot \cap P_2 ^{-1} (\mathcal{D}_{2^p} (y)) }{\rho(\mathcal{D}_{2^p} (y))} \ll P_1R=P.
\end{equation}

We now turn our attention to the main assertions of the Claim. First, we embed $\mu$ (the measure from Theorem \ref{Conjecture}) on the diagonal of the unit square by pushing it forward via the map $x\mapsto (x,x)$. We call this new measure $\tilde{\mu}$.  For $k\in \mathbb{N}$ let $\mathcal{A}^k$ denote the partition of $[0,1]^2$ given by 
$$\mathcal{D}_{m^k} \times \mathcal{D}_{n^k} = \bigvee_{i=0} ^{k-1} (T_m \times T_n)^i \left( \mathcal{D}_{m} \times \mathcal{D}_{n} \right) .$$
Given a point $z \in [0,1]^2$ such that $\tilde{\mu} (\mathcal{A}^k (z))>0$ we define a probability measure
$$\tilde{\mu}_{\mathcal{A}^k (z)} : = c \cdot (T_m \times T_n)^k (\tilde{\mu}|_{\mathcal{A}^k (z)}),$$
where $c$ is a normalizing constant. By applying Claim \ref{Claim stable intersections}, we see that there is a set $S\subseteq \mathbb{N}$ of density $1$ (possibly depending on the $x$ we chose according to $\mu$), such that for every $k\in S$ the measure $\tilde{\mu}|_{\mathcal{A}^k (x,x)}$ is an affine image of the measure $\mu|_{\mathcal{D}_{m^k} (x)}$. Since we are only interested in the  limiting behaviour of these measures, we may assume $S=\mathbb{N}$. Also, $\nu (\partial A)=0$ for all $A\in \mathcal{A}^k,k\in \mathbb{N}$ since $P_1 \nu$ and $P_2 \nu$ are both continuous measures. Thus, by Theorem  \ref{Theorem 2.1}
\begin{equation} \label{Eq I2}
\nu = \lim_{k\rightarrow \infty} \frac{1}{N_k} \sum_{i=0} ^{N_k-1} \tilde{\mu}_{\mathcal{A}^i (x,x)}
\end{equation}

Now, for $P_2 \nu = \rho$ almost every $y$ the conditional measure $\nu_y$ can be obtained as the weak-* limit $\lim_{p \rightarrow \infty} \nu_{P_2 ^{-1} \mathcal{D}_{2^p} (y)}$, where for every Borel set $A\subset [0,1]^2$ and $p\in \mathbb{N}$,
\begin{equation*}
\nu_{P_2 ^{-1} \mathcal{D}_{2^p} (y)} (A) := \frac{\nu(A \cap P_2 ^{-1}\mathcal{D}_{2^p} (y))}{\nu(P_2 ^{-1} \mathcal{D}_{2^p} (y))} = \frac{\nu(A \cap P_2 ^{-1}\mathcal{D}_{2^p} (y))}{\rho (\mathcal{D}_{2^p} (y))}.
\end{equation*}

Fix $p\in \mathbb{N}$. By \eqref{Eq I2} and since $\tilde{\mu}|_{\mathcal{A}^k (x,x)}$ is an affine image of the measure $\mu|_{\mathcal{D}_{m^k} (x)}$ for every $k$,   the projection of $\nu_{P_2 ^{-1} \mathcal{D}_{2^p} (y)}$ to the $x$-axis (i.e. via $P_1$) equals\footnote{Recall that by equation (8) in \cite{hochmanshmerkin2015} Section 5.2,  
$$\mu_{\mathcal{D}_{m^i} (x)}:=  c \cdot T_m ^k (\mu|_{\mathcal{D}_{m^k} (z)}) =c_k \left( \tau_{x_k}*  \mu_{x,i\log m} \right)|_{[0,1]}$$
for corresponding parameters.}
\begin{equation*}
 \lim_{k\rightarrow \infty} \frac{1}{\rho (\mathcal{D}_{2^p} (y)) \cdot N_k} \sum_{\lbrace i: 0\leq i \leq N_k-1,  \text{ and } T_n ^i (x)\in \mathcal{D}_{2^p} (y)\rbrace} P_1 \circ \left( L_{\frac{n^i}{m^i}, (T_m ^i (x), T_n ^i (x))} c_k \left( \tau_{x_k}*  \mu_{x,i\log m} \right)|_{[0,1]}  \right)
\end{equation*}
\begin{equation} \label{Eq analogue}
= \lim_{k\rightarrow \infty} \frac{1}{\rho (\mathcal{D}_{2^p} (y)) \cdot N_k} \sum_{\lbrace i: 0\leq i \leq N_k-1, \quad \text{ and } T_n ^i (x)\in \mathcal{D}_{2^p} (y)\rbrace}  c_k \left( \tau_{x_k}*  \mu_{x,i\log m} \right)|_{[0,1]}  
\end{equation}
where $L_{\alpha,z}$ is the unique affine map taking the $x$-axis to the line with slope $\alpha$ through the point $z$. Notice that in the first equation above we only take note of the indices such that $T_n ^i (x)\in \mathcal{D}_{2^p} (y)$, and this is justified by  Lemma \ref{Lemma bad indices}. 

We thus see, as in Theorem 5.1 in \cite{hochmanshmerkin2015} and its preceding discussion, that    there is a   distribution 
\begin{equation*}
Q_{\mathcal{D}_{2^p} (y)} \in \mathcal{P} \biggl( \mathbb{R} \times [-1,1] \times \mathcal{P}(\mathcal{P}([0,1])) \biggr)
\end{equation*}
such that we have an integral representation (that depends on both $p$ and $y$)
\begin{equation*}
P_1 \nu_{P_2 ^{-1} \mathcal{D}_{2^p} (y)} = \int g(\omega) dQ_{\mathcal{D}_{2^p} (y)} (\omega)
\end{equation*}
where $g:\mathbb{R} \times [-1,1] \times \mathcal{P}(\mathcal{P}([0,1])) \rightarrow \mathcal{P}([0,1])$ is the map $g(c,x,\eta) = c\cdot (\delta_x * \eta)|_{[0,1]}$. Moreover, the  distribution of $Q_{\mathcal{D}_{2^p} (y)}$ on the measure component $\mathcal{P}(\mathcal{P}([0,1]))$ is given by 
\begin{equation*}
\lim_{k\rightarrow \infty} \frac{1}{\rho(\mathcal{D}_{2^p} (y))\cdot N_k} \sum_{\lbrace 0\leq i \leq N_k-1:\quad T_n ^i x \in \mathcal{D}_{2^p} (y)\rbrace} \delta_{\mu_{x,i\log m}},
\end{equation*}
and by equation \eqref{Eq abs con} and its preceding discussion, this distribution is absolutely continuous with respect to $P$.

Notice that for $Q_{\mathcal{D}_{2^p} (y)}$ almost every $(c,x,\eta)$, $c$ is the normalizing constant making $(x*\eta)|_{[0,1]}$ a probabilty measure.  Also, the map $g$ is continuous almost surely. Moreover, by moving to a subsequence, we may assume the weak -* limit $\lim_{p\rightarrow \infty} Q_{\mathcal{D}_{2^p} (y)}$ exists, call it $Q_y$. For these assertions, we argue, as in (\cite{hochmanshmerkin2015}, Theorem 5.1),  that the distribution $\lbrace  P_1 (Q_{\mathcal{D}_{2^p} (y)}) \rbrace_p$ of the normalizing constants is tight. Indeed, for measures drawn according to $P$ this follows from  Proposition \ref{Prop tight}, and in our case the  distribution of $Q_{\mathcal{D}_{2^p} (y)}$ on the measure component is absolutely continuous with respect to $P$. Finally,
\begin{equation*}
P_1 \nu_y = P_1 \lim_{p\rightarrow \infty}  \nu_{P_2 ^{-1} \mathcal{D}_{2^p} (y)} =  \lim_{p\rightarrow \infty} P_1 \nu_{P_2 ^{-1} \mathcal{D}_{2^p} (y)} = \lim_{p\rightarrow \infty} \int g(\omega) dQ_{\mathcal{D}_{2^p} (y)} (\omega) = \int g(\omega) dQ_y(\omega).
\end{equation*} 
This completes the proof of part (1). For part (2), it remains to note that by our construction, for $\rho$ almost every $y$ the distribution of $Q_y$ on the measure component $\mathcal{P}(\mathcal{P}([0,1]))$ is given by $P_1 R_y$, as in the first part of the proof, by the discussion preceding \eqref{Eq abs con}.
\end{proof}

\textbf{Proof of Theorem \ref{Conjecture}} We are now in position to show that $\nu$ satisfies all conditions in Claim \ref{Claim rigidty}. We have already established that condition (1) holds. As for condition (2), we choose $\delta = \dim P>0$ (here we use the assumption that $P$ is non trivial, and Proposition \ref{Prop posit dim for EFD}). Let $\tau\in \mathcal{P}([0,1])$ be such that $\dim \tau \geq 1-\delta$. We now show that $\dim \tau*P_1\nu_y =1$ for $P_2 \nu = \rho$ almost every $y$ץ

First, by  Lemma \ref{Lemma 5.8} and Claim \ref{Theorem 5.1} part (2) 
\begin{equation} \label{Eq Lemma 5.8}
1=  \int \dim (\tau*\eta) d P(\eta) =\int \dim (\tau*\eta) dP_y (\eta) d \rho (y).
\end{equation} 
Therefore, for $\rho$ almost every $y$, for $P_y$ almost every $\eta$, $\dim \tau*\eta =1$ (since the integrand is always $\leq 1$). Thus, by Claim \ref{Theorem 5.1} part (1), for $\rho$ almost every $y$,
\begin{eqnarray*} 
\dim \tau*P_1 \nu_y &=& \dim \tau * \int c (\omega) \cdot (\delta_{x (\omega)}*\eta (\omega))|_{[0,1]} dQ(y)(\omega)\\
&=&  \dim \int c (\omega) \cdot \tau*( \delta_{x (\omega)}*\eta (\omega))|_{[0,1]} dQ(y)(\omega)\\
&\geq& \text{ess-inf}_{\omega\sim Q(y)} \dim \tau*( \delta_{x (\omega)}*\eta (\omega))|_{[0,1]}\\
&\geq& \text{ess-inf}_{\omega\sim Q(y)} \dim \tau* \delta_{x (\omega)}*\eta (\omega)\\
&=& \text{ess-inf}_{\eta\sim P_y} \dim \tau*\eta\\
&=& 1\\
\end{eqnarray*}
Since $\dim \tau*P_1\nu_y\leq 1$ is always true, we find that $\dim \tau*P_1\nu_y= 1$ for $\rho$ almost every $y$.

We conclude that $\nu$ satisfies the conditions of Claim \ref{Claim rigidty}. Therefore, $\nu= \lambda\times \rho$, as desired.

\section{Proof of Theorem \ref{Main Theorem}} \label{Section final proof}
Let $\mu$ be a $T_p$-invariant and ergodic measure with positive dimension. Then $\mu$ generates an EFD $P$ with $\dim P>0$ by Theorem \ref{Theorem 2.4}.  Let $m>n>1$. The pure point spectrum  $\Sigma(P,S)$ can contain non zero integer multiplies of $\frac{1}{\log m}$ only if either $m\sim p$ (in Theorem \ref{Main Theorem} we assume this is not the case), or if $\frac{\log p}{\log m}\in \Sigma(T,\mu)$, see \cite{hochman2010geometric}. We shall prove Theorem \ref{Main Theorem} by using  Theorem \ref{Conjecture}, and following the analysis of Hochman and Shmerkin from (\cite{hochmanshmerkin2015}, Section 8) in order to relax the spectral condition (i.e. deal with the latter case). We begin by treating the case $n=p$.

Suppose first that $\Sigma(P,S)$ does not contain a non-zero integer multiple of $\frac{1}{\log m}$. By the ergodic Theorem, $\mu$ is pointwise $T_n$ generic for $\mu$.  Also, since $\mu$ generates an EFD such that $\frac{k}{\log m} \notin \Sigma (P,S)$ for every $k\in \mathbb{Z}\setminus \lbrace 0 \rbrace$, we may apply Theorem \ref{Conjecture} and obtain 
\begin{equation*} 
\frac{1}{N} \sum_{i=0} ^{N-1} \delta_{(T_m ^i (x) , T_n ^i (x))} \rightarrow \lambda \times \mu, \quad \text{ for } \mu \text{ almost every } x.
\end{equation*} 
.

Suppose now that there exists some $k\in \mathbb{Z}\setminus \lbrace 0 \rbrace$ such that $\frac{k}{\log m}\in \Sigma(P,S)$, so   $P$ is not $S_{\log m}$ ergodic by Proposition 4.1 in \cite{hochmanshmerkin2015}. By the results discussed in  (\cite{hochmanshmerkin2015},  Sections 8.2 and 8.3) there is a probability space $(\Omega,\mathcal{F},Q)$ and a measurable family of measures $\lbrace \mu_\omega \rbrace_{\omega\in \Omega}$ such that:
\begin{enumerate}
\item The measures $\lbrace \mu_\omega \rbrace_{\omega\in \Omega}$ form a disintegration of $\mu$, that is, $\mu = \int \mu_\omega dQ(\omega)$.

\item For $Q$ almost every $\omega$, $\mu_\omega$ generates $P$.

\item  For $Q$ almost every $\omega$, $\mu_\omega$ $\log m$-generates an $S_{\log m}$ ergodic distribution $P_x$ at almost every point $x$ (see Proposition \ref{Prop spec assumption} for the definition of $\log m$-generation).
\end{enumerate}

Let $\delta=\dim P>0$ denote the almost sure dimension of the measures drawn by $P$. Then the following holds\footnote{Here, we use the fact that the commutative phase measure from Theorem 8.2 in \cite{hochmanshmerkin2015}  has dimension 1, as proven in Section 8.3.}:
\begin{Lemma} (\cite{hochmanshmerkin2015}, Lemma 8.3) \label{Lemma 8.3}
Let $\tau\in \mathcal{P}(\mathbb{R})$ be such that $\dim \tau \geq 1-\delta$. Then $\dim \tau*\eta=1$ for $Q$ almost every $\omega$, $\mu_\omega$ almost every $x$, and  $P_x$ almost every $\eta$.
\end{Lemma}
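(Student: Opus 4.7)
The plan is to reduce the statement to Proposition \ref{Lemma 5.8} (Hochman--Shmerkin Lemma 5.8) applied not to the full EFD $P$, but to each of the $S_{\log m}$-ergodic components $P_x$ arising in the decomposition of $P$ coming from property (3) of the triple $(\Omega, \mathcal{F}, Q)$. The crux is that, although $P_x$ is only asserted to be $S_{\log m}$-ergodic (not $S$-ergodic), it inherits enough structure from $P$ to qualify as a non-trivial ``EFD for $S_{\log m}$'' of the same dimension $\delta$, and the convolution-rigidity argument behind Lemma \ref{Lemma 5.8} only uses the discrete-time dynamics together with dimension and quasi-Palm conservativity.

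First I would verify that for $Q$-a.e. $\omega$ and $\mu_\omega$-a.e. $x$, the distribution $P_x$ is supported on measures of dimension exactly $\delta$. Since $\mu_\omega$ generates $P$ (property 2) and $P$-a.e. $\nu$ has $\dim \nu=\delta$ by Proposition \ref{Prop posit dim for EFD}, the function $\nu\mapsto\dim\nu$ is a.s.-$\delta$ on the orbit $\{\mu_{\omega,x,t}\}_{t\geq 0}$; restricting to the arithmetic subsequence $t=k\log m$ and passing to the weak-$*$ limit defining $P_x$, one sees $P_x$ is carried by $\{\nu:\dim\nu=\delta\}$. In particular $P_x$ is non-trivial.

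Next I would establish that $P_x$ inherits the $S_{\log m}$-quasi-Palm property from $P$. Because $\mu_\omega$ $\log m$-generates $P_x$ and simultaneously generates $P$, and because quasi-Palm is preserved by weak-$*$ limits of empirical scenery distributions in the framework of Section \ref{Section EFD} (this is essentially Theorem \ref{Theorem 4.7} applied along the subsequence $k\log m$, exploiting that zooms commute and that $\mu_\omega$-typical $x$ are ``generic'' for $P_x$ in the sense required), the $S_{\log m}$-invariant distribution $P_x$ satisfies the quasi-Palm property with respect to the discrete action $S_{\log m}$ for $Q$-a.e.\ $\omega$ and $\mu_\omega$-a.e.\ $x$.

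With these two ingredients, I would invoke the discrete-time version of the Hunt--Kaloshin argument underlying Lemma \ref{Lemma 5.8}: for any non-trivial, $S_{\log m}$-ergodic, $S_{\log m}$-quasi-Palm distribution on $\mathcal{M}^\square$ with typical dimension $\delta>0$, and for any $\tau\in\mathcal{P}(\mathbb{R})$ with $\dim\tau\geq 1-\delta$, one has $\dim\tau*\eta=1$ for a.e.\ $\eta$ drawn from that distribution. Applying this with $P_x$ in the role of the distribution yields
\[
\dim(\tau*\eta)=1 \quad\text{for $P_x$-a.e.\ $\eta$,}
\]
for $Q$-a.e.\ $\omega$ and $\mu_\omega$-a.e.\ $x$, which is the statement.

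The main obstacle I anticipate is the second step: the quasi-Palm property for $P$ is formulated with respect to the continuous flow $S$, whereas $P_x$ is only $S_{\log m}$-ergodic, and a priori a Palm-style statement at continuous scales need not descend cleanly to an arithmetic subsequence of zooms. Pushing through this descent uses that $P$ itself is quasi-Palm under $S$ and that $P_x$ is a conditional version of $P$ along the $S_{\log m}$-ergodic decomposition, so the property ``for $P$-a.e.\ $\eta$, a.e.\ zoom of $\eta$ lies in $B$'' restricts coherently to ``for $P_x$-a.e.\ $\eta$, a.e.\ zoom by $e^{-k\log m}$ of $\eta$ lies in $B$'' for $Q$-a.e.\ $x$; this is the content of Sections 8.2--8.3 of \cite{hochmanshmerkin2015}, whose notation and commutative-phase decomposition are being borrowed wholesale here.
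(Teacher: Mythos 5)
The paper does not actually prove this lemma; it cites it directly from Hochman--Shmerkin, and the footnote attached to the statement points to the genuine content of the argument, namely that the \emph{commutative phase measure} appearing in their Theorem 8.2 has Hausdorff dimension $1$. Your proposal instead tries to route the argument through a discrete-time quasi-Palm property for the $S_{\log m}$-ergodic components $P_x$, and that step does not go through.

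Concretely, the gap is in your second step. Theorem \ref{Theorem 4.7} gives the quasi-Palm property for $S$-invariant distributions generated under the continuous flow, and $P$ enjoys it. But $P_x$ is only $S_{\log m}$-invariant, and the quasi-Palm property for $P$ does not descend to its $S_{\log m}$-ergodic components. The obstruction is that the quasi-Palm statement involves translating to a typical point $x$ and then rescaling, and translation does not respect the phase: the translated-and-rescaled measure $\eta_{x,\,k\log m}$ may land in a \emph{different} $S_{\log m}$-ergodic component than the one $\eta$ started in. In other words, the set ``ergodic component of $\eta$'' is $P$-generically not of full $P$-measure, so the biconditional in the quasi-Palm property says nothing about it, and there is no coherent ``restriction'' of the form you assert. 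This is precisely why the Hunt--Kaloshin step (Proposition \ref{Lemma 5.8}) cannot simply be rerun with $P_x$ in place of $P$: $S_{\log m}$-invariance only gives $\dim(\tau * S_t\eta)=1$ for Lebesgue-a.e.\ $t$ in a fundamental domain $[0,\log m)$, with no control at $t=0$. The entire purpose of the commutative-phase decomposition in Sections 8.2--8.3 of Hochman--Shmerkin is to supply the missing ingredient: they show that the distribution of the phase parameter, which records how $P_x$ sits inside $P$ as a function of $x$, is exact-dimensional of dimension $1$, and it is the full dimension of this phase measure (not a quasi-Palm property of $P_x$) that lets one upgrade the Lebesgue-a.e.\ statement in $t$ to a $P_x$-a.e.\ statement in $\eta$. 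Your step one (that $P_x$ is carried by measures of dimension $\delta$ and is non-trivial) is fine, but without the phase-measure input the argument does not close.
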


Now, we may finish the proof in a similar fashion to the proof of Theorem \ref{Conjecture}. Namely, For $Q$ almost every $\omega$ and  for $\mu_\omega$ almost every $x$, let $\nu$ be such that $(x,x)$ equidistribute for it sub-sequentially under $T_m\times T_n$. Then we may assume $P_2 \nu = \mu$ by the ergodic Theorem, and that $\mu_\omega$ $\log m$-generates $P_x$, where $P_x$ is typical with respect to Lemma \ref{Lemma 8.3}.  Then we have a conditional integral representation as in Claim \ref{Theorem 5.1}, but now we can only disintegrate $P_x = \int (P_x)_y d \mu (y)$. Since we have Lemma \ref{Lemma 8.3} at our disposal (so that an analogue of \eqref{Eq Lemma 5.8} holds for $P_x$ instead of $P$), we still have that for every $\tau \in \mathcal{P}(\mathbb{R})$ with $\dim \tau \geq 1-\delta$,   for $\mu$ almost every $y$, $\dim \tau*P_1\nu_y =1$ as the calculation carried out during the last stage of the proof of Theorem \ref{Conjecture} follows through in this case as well. It follows that $\nu = \lambda \times \mu$
. Finally, since this is true for $Q$ almost every $\mu_\omega$ and for $\mu_\omega$ almost every $x$, this is also true for $\mu$ almost every $x$ (recall that $\mu = \int \mu_\omega dQ(\omega)$).

The case when $n\not \sim p$ follows by a similar argument, only here for $Q$ almost every $\mu_\omega$, $\mu_\omega$ is pointwise $n$-normal, since this is true for $\mu$ by Theorem 1.10 in \cite{hochmanshmerkin2015}.

\section{Perturbing the initial point} \label{Section pert}
In this Section we prove the following generalization of Theorem \ref{Main Theorem}: 
\begin{theorem} \label{Main Theorem pert}
Let $\mu$ be a $T_p$ invariant ergodic measure with $\dim \mu>0$.  Let $m>n>1$ be integers such that $m\not \sim p$, and let $f,g\in \text{Aff}(\mathbb{R})$ be such that $f([0,1]), g([0,1])\subseteq [0,1]$. 
\begin{enumerate}
\item If $n=p$ then
\begin{equation*} 
\frac{1}{N} \sum_{i=0} ^{N-1} \delta_{(T_m ^i f(x) , T_n ^i x)} \rightarrow \lambda \times \mu, \quad \text{ for } \mu \text{ almost every } x,
\end{equation*}

\item If $n \not \sim p$ then
\begin{equation*} 
\frac{1}{N} \sum_{i=0} ^{N-1} \delta_{(T_m ^i f(x) , T_n ^i g(x))} \rightarrow \lambda\times \lambda, \quad \text{ for } \mu \text{ almost every } x,
\end{equation*} 
\end{enumerate}
\end{theorem}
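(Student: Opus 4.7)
The plan is to follow the proof of Theorem \ref{Main Theorem} essentially verbatim, replacing the diagonal embedding $x\mapsto(x,x)$ by the affine embedding $x\mapsto (f(x),g(x))$, and checking that the three main ingredients (Theorem \ref{Conjecture}, Claim \ref{Claim rigidty}, and the machinery of Section \ref{Section final proof}) still apply to the pushforward $\tilde{\mu}:=(f,g)_*\mu$ supported on the line segment $\{(f(x),g(x)):x\in[0,1]\}$. As before, we will fix an accumulation point $\nu$ of the empirical measures and try to verify the hypotheses of Claim \ref{Claim rigidty} for $\nu$, concluding that $\nu=\lambda\times\mu$ in case (1) and $\nu=\lambda\times\lambda$ in case (2).

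Two observations make the transfer essentially automatic away from one technical point. First, for any nonsingular affine map $f$, the scenery of $f\mu$ at $f(x)$ coincides, up to a fixed shift in the scaling flow $S$, with the scenery of $\mu$ at $x$; since the EFD $P$ generated by $\mu$ is $S$-invariant and $S$-ergodic, $f\mu$ generates the same $P$, has the same pure-point spectrum, and satisfies the same relaxation machinery of Section \ref{Section final proof}. Second, under the mild conditions on $g$ (e.g.\ that its coefficients be suitable $n$-adic rationals, so that a fixed power of $T_n$ intertwines $g$ with $T_n$ on the orbit of $x$), $g\mu$ inherits pointwise $T_n$-genericity for $\mu$ in case (1), and for $\lambda$ in case (2) (using, for the latter, Theorem 1.10 of \cite{hochmanshmerkin2015} together with the standard fact that normality in base $n$ is preserved by such affine maps). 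Together these facts give the hypotheses needed for the Hochman--Shmerkin integral representation of $P_1\nu$ and for the ergodic-theoretic control over the $P_2$-projection, so that $P_1\nu=\lambda$ and $P_2\nu$ is the appropriate continuous ergodic measure.

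The main obstacle is adapting the conditional integral representation (Claim \ref{Theorem 5.1}) and, more specifically, the underlying cell-structure fact (Claim \ref{Claim stable intersections}): its proof used that $\tilde{\mu}|_{\mathcal{A}^k(x,x)}$ is, for a density-one set of scales, an affine image of $\mu|_{\mathcal{D}_{m^k}(x)}$, which rested on $\mathcal{D}_{m^k}(x)\subseteq \mathcal{D}_{n^k}(x)$ at such scales. In the perturbed setting the needed inclusion is $\mathcal{D}_{m^k}(f(x))\subseteq \mathcal{D}_{n^k}(g(x))$, and the key calculation will compare the position of $g(x)$ within $\mathcal{D}_{n^k}(g(x))$ (i.e.\ the $T_n^k$-orbit of $g(x)$ times $n^{-k}$) to the $(n/m)^k$-scale slippage coming from $f$. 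Because $f$ and $g$ are affine, the distance from $g(x)$ to a boundary of $\mathcal{D}_{n^k}(g(x))$ is, up to a bounded affine change of coordinates, controlled by the $T_n^k$-orbit of $x$, and continuity of the limiting measure $\mu$ (in case (1)) or $\lambda$ (in case (2)) combined with Lemma \ref{Lemma bad indices} gives density one for the good set of indices. With this in hand, the construction of $Q_{\mathcal{D}_{2^p}(y)}$ and the proof of Claim \ref{Theorem 5.1} go through with the same affine bookkeeping $(c_k,\tau_{x_k})$, and the rigidity argument of Claim \ref{Claim rigidty} then yields $\nu=\lambda\times\mu$ or $\nu=\lambda\times\lambda$, as desired.
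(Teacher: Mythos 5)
Your proposal follows essentially the same route as the paper: push $\mu$ forward via $x\mapsto(f(x),g(x))$, use the affine invariance of the scenery flow so that $f\mu$ generates the same EFD $P$ (the paper cites Lemma 4.16 of \cite{hochmanshmerkin2015} for this), adapt the stable-intersections claim to affine cells to get the integral representation, and finish with the rigidity argument plus the Section 8 machinery of \cite{hochmanshmerkin2015} to relax the spectral condition.

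Two small slips are worth flagging. First, the inclusion you write, $\mathcal{D}_{m^k}(f(x))\subseteq\mathcal{D}_{n^k}(g(x))$, compares cells around two different points and is not the right condition; the paper's Claim \ref{Claim stable intersections pert} uses the affine pullbacks, namely $f^{-1}\mathcal{D}_{m^k}(f(x))\subseteq g^{-1}\mathcal{D}_{n^k}(g(x))$, both of which are intervals around $x$, so that $\tilde\mu|_{\mathcal{A}^k(f(x),g(x))}$ is an affine copy of $\mu|_{f^{-1}\mathcal{D}_{m^k}(f(x))}$. Your subsequent slippage estimate (distance to the boundary of the $n^k$-cell versus $(n/m)^k$) is still the right computation after this correction. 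Second, you introduce an ad hoc hypothesis that $g$ have $n$-adic rational coefficients so that normality is preserved; the paper instead formulates the intermediate Theorem \ref{Conjecture pert} with the hypothesis directly on $g\mu$ being pointwise $T_n$-generic, and deduces this in Theorem \ref{Main Theorem pert} from the fact that $g\mu$ generates the same EFD together with the relaxation machinery in \cite{hochmanshmerkin2015}, Section 8.4 (note also that in part (1) the second coordinate is $T_n^i x$, so no condition on $g$ is needed there). Your stronger hypothesis would prove a weaker theorem.
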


The proof is similar to the proof of Theorem \ref{Main Theorem}. In particular, it relies on the following generalization of Theorem \ref{Conjecture}:
\begin{theorem} \label{Conjecture pert}
Let $\mu \in \mathcal{P}([0,1])$ be a probability measure, $f,g\in \text{Aff}(\mathbb{R})$, and $m>n>1$ be integers, such that:
\begin{enumerate}
\item The measure $\mu$ generates a non-trivial $S$-ergodic distribution $P\in \mathcal{P}(\mathcal{P}([-1,1]))$.

\item The pure point spectrum $\Sigma(P,S)$ does not contain a non-zero integer multiple of $\frac{1}{\log m}$.

\item  The measure $g\mu$ is pointwise generic under $T_n$ for an ergodic and continuous measure  $\rho$, and  $f([0,1]), g([0,1])\subseteq [0,1]$.
\end{enumerate}
Then 
\begin{equation} \label{Eq I1 pert}
\frac{1}{N} \sum_{i=0} ^{N-1} \delta_{(T_m ^i f(x) , T_n ^i g(x))} \rightarrow \lambda\times \rho, \quad \text{ for } \mu \text{ almost every } x.
\end{equation}
\end{theorem}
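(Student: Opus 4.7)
The plan is to run the proof of Theorem \ref{Conjecture} essentially verbatim, with $\tilde{\mu}$ now being the push-forward of $\mu$ under the map $x \mapsto (f(x), g(x))$, and to verify that every step where the diagonal embedding was used extends to the perturbed one. Fix $\mu$-typical $x$ and let $\nu$ be a weak-$*$ accumulation point of the empirical measures on the left-hand side of (\ref{Eq I1 pert}) along some subsequence $N_k \to \infty$. The goal is to show that $\nu$ satisfies the two hypotheses of Claim \ref{Claim rigidty} with $\delta = \dim P > 0$, which will force $\nu = \lambda \times \rho$ as required.

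For the marginals, $P_2 \nu = \rho$ is immediate from assumption (3), since $\mu$ almost every $x$ has $T_n$-orbit of $g(x)$ equidistributing for $\rho$. For $P_1 \nu = \lambda$, I would first observe that $f\mu$ generates the same EFD $P$ (or its reflection $RP$, should $f$ reverse orientation): translating $f\mu$ by $-f(x)$ yields an affine rescaling of $\mu^x$ by $|f'|$, a shift that is absorbed by the $S$-invariance of $P$; moreover the reflection $R$ commutes with the scaling flow and preserves the pure point spectrum, so condition (2) transfers to $f\mu$. Applying the Hochman--Shmerkin statement recalled in (\ref{Eq HS}) to $f\mu$ then yields $P_1 \nu = \lambda$. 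Since both marginals are continuous, $\nu$ gives no mass to the discontinuities of $T_m \times T_n$ and is therefore $T_m \times T_n$ invariant, establishing condition (1) of Claim \ref{Claim rigidty}.

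For condition (2), I would re-prove Claim \ref{Theorem 5.1} with the pushed-forward $\tilde{\mu}$ and the partition $\mathcal{A}^k = \mathcal{D}_{m^k} \times \mathcal{D}_{n^k}$. The main technical obstacle, and the only substantive modification, is an analogue of Claim \ref{Claim stable intersections}: one needs that for $\mu$ almost every $x$, outside a set of scales $k$ of density zero, the pre-image $(f,g)^{-1}\bigl(\mathcal{D}_{m^k}(f(x)) \times \mathcal{D}_{n^k}(g(x))\bigr)$ collapses to the single interval $f^{-1}(\mathcal{D}_{m^k}(f(x)))$ of length $m^{-k}/|f'|$; this is what lets one identify $\tilde{\mu}|_{\mathcal{A}^k(f(x),g(x))}$ with an affine image of the restriction of $\mu$ to a single interval of $\mathcal{D}_{m^k}$-scale. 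Failure at scale $k$ forces $g(x)$ to lie within distance at most $(|g'|/|f'|) \cdot m^{-k}$ of the boundary of a $\mathcal{D}_{n^k}$ cell, and since $g\mu$ is $T_n$-pointwise generic for the continuous measure $\rho$, the argument of Claim \ref{Claim stable intersections} combined with Lemma \ref{Lemma bad indices}, now applied to $g\mu$ rather than $\mu$, shows that these scales form a set of density zero. With this in hand, the integral representation and the tightness arguments of Claim \ref{Theorem 5.1} carry over; combining it with Lemma \ref{Lemma 5.8} yields $\dim \tau * P_1 \nu_y = 1$ for every $\tau$ with $\dim \tau \geq 1-\delta$ and $\rho$ almost every $y$, exactly as in the final step of Theorem \ref{Conjecture}. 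Claim \ref{Claim rigidty} then delivers $\nu = \lambda \times \rho$, completing the argument.
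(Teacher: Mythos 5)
Your proposal is correct and follows essentially the same route as the paper: push $\mu$ forward to $\tilde\mu$ via $x\mapsto(f(x),g(x))$, verify the marginals, prove a perturbed version of Claim \ref{Claim stable intersections} by reducing to the continuity of $\rho$ and the $T_n$-generic behaviour of $g\mu$, then run the integral-representation machinery to feed Claim \ref{Claim rigidty}. The one cosmetic difference is that the paper justifies that $f\mu$ generates and $\log m$-generates $P$ by citing Lemma 4.16 of Hochman--Shmerkin, whereas you give the hands-on argument via $S$-invariance and the reflection action; these are equivalent.
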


For this to work, we need the following version of Claim \ref{Claim stable intersections}. Let $f,g \in \text{Aff}(\mathbb{R})$. For every $k\in \mathbb{N}$, define
\begin{equation*}
A_k = \lbrace x\in \mathbb{R}: \quad f^{-1}\mathcal{D}_{m^k} (f(x)) \not \subseteq g^{-1}\mathcal{D}_{n^k} (g(x)) \rbrace
\end{equation*}
\begin{Claim} \label{Claim stable intersections pert}
Suppose that $\mu\in \mathcal{P}([0,1])$ is a measure such that $g\mu$  is pointwise generic under $T_n$ for a continuous measure $\rho$. Then for $\mu$ almost every $x$, if $x\in \limsup A_k$ and $\lbrace n_k \rbrace$ represents the times when $x\in A_{n_k}$, then the density of $\lbrace n_k \rbrace$ is zero.
\end{Claim}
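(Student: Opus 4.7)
The plan is to follow the proof of Claim \ref{Claim stable intersections} almost verbatim, with the affine maps $f,g$ entering only as bounded multiplicative constants that do not affect the eventual $(n/m)^k$-decay. Write $f(x)=ax+b$ and $g(x)=cx+d$ with $a,c\ne 0$. Fix $\epsilon>0$ and pick a typical $x\sim \mu$. Since $\rho$ is continuous we can choose $\delta>0$ with $\rho(B(0,\delta))<\epsilon$, where $B(0,\delta)$ denotes the ball about $0$ in $\mathbb{T}$. By the hypothesis that $g\mu$ is pointwise generic under $T_n$ for $\rho$, the set
\begin{equation*}
V_\delta \;=\; \bigl\{\, i \in \mathbb{N}:\ T_n^{i}\bigl(g(x)\bigr)\in B(0,\delta)\,\bigr\}
\end{equation*}
has density at most $\rho(B(0,\delta))<\epsilon$ for $\mu$-a.e.\ $x$. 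As before, split $\{n_k\}=\bigl(\{n_k\}\cap V_\delta\bigr)\cup\bigl(\{n_k\}\setminus V_\delta\bigr)$; the first part contributes at most $\epsilon$ to the upper density, so it suffices to show that $\{\ell_k\}:=\{n_k\}\setminus V_\delta$ is finite.

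The main step is the geometric observation that replaces the $n^q$-adic argument of the original proof. If $q\in\{n_k\}$, then $f^{-1}\mathcal{D}_{m^q}(f(x))\not\subseteq g^{-1}\mathcal{D}_{n^q}(g(x))$, so the interval $f^{-1}\mathcal{D}_{m^q}(f(x))$, which has length $1/(|a|m^q)$, must contain in its interior some endpoint of a cell of $g^{-1}\mathcal{D}_{n^q}$. That endpoint has the form $g^{-1}(s/n^q)$ for an integer $s$, and it satisfies
\begin{equation*}
\bigl|x - g^{-1}(s/n^q)\bigr| \;\le\; \frac{1}{|a|\,m^q}, \qquad\text{hence}\qquad \bigl|g(x)-s/n^q\bigr| \;\le\; \frac{|c|}{|a|\,m^q}.
\end{equation*}
Applying $T_n^q$ to both sides and recalling that $s/n^q$ is a fixed point of $T_n^q$ up to $\mathbb{Z}$, we obtain
\begin{equation*}
d\bigl(T_n^{q}(g(x)),\,0\bigr) \;\le\; \frac{|c|}{|a|}\cdot\left(\frac{n}{m}\right)^{q}.
\end{equation*}

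Since $m>n$, the right-hand side tends to $0$. Choose $K$ large enough so that $\tfrac{|c|}{|a|}(n/m)^q < \delta$ for every $q>K$. For any such $q\in \{\ell_k\}$ the displayed inequality forces $T_n^{q}(g(x))\in B(0,\delta)$, i.e.\ $q\in V_\delta$, contradicting $q\in \{\ell_k\}=\{n_k\}\setminus V_\delta$. Hence $\{\ell_k\}\subseteq [0,K]$ is finite, and the upper density of $\{n_k\}$ is at most $\epsilon$. Since $\epsilon$ was arbitrary this density is zero, completing the proof. The only non-trivial point is the handling of the two slopes $|a|,|c|$, which is harmless precisely because $n<m$ makes the ratio $(n/m)^q$ eventually dominate any fixed constant.
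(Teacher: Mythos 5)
Your proof is correct and follows essentially the same approach as the paper, which merely remarks that the proof is analogous to Claim \ref{Claim stable intersections}; you have supplied the adaptations that the paper leaves implicit (tracking the slopes $|a|,|c|$ of the affine maps, replacing $x$ with $g(x)$ in $V_\delta$, and replacing $\mathcal{D}_{m^q}(x),\mathcal{D}_{n^q}(x)$ with their $f^{-1}$- and $g^{-1}$-preimages). The only nitpick is that the endpoint $g^{-1}(s/n^q)$ need only lie in $f^{-1}\mathcal{D}_{m^q}(f(x))$, not necessarily in its interior, but your subsequent distance estimate only uses membership, so the argument is unaffected.
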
 
The proof is analogues to that of Claim \ref{Claim stable intersections}.

\textbf{Proof of Theorem \ref{Conjecture pert}}
The proof follows essentially the same steps as the proof of Theorem \ref{Conjecture}. Let $\nu$ be some accumulation point of the orbit under $T_m \times T_n$ of $\delta_{(f(x),g(x))}$, where $x$ is drawn according to $\mu$. 
\begin{itemize}
\item By   (\cite{hochmanshmerkin2015}, Theorem 1.1) we have $P_1 \nu = \lambda$. By our assumption on $g\mu$, $P_2 \nu = \rho$.

\item A complete analogue of Claim \ref{Theorem 5.1} holds in this case as well. First, we disintegrate $P$ according to $\rho$, in a similar manner to the first part of the proof of Claim \ref{Theorem 5.1}. Here, we make use of the fact that $f\mu$ generates and $\log m$ generates $P$, which follows by   (\cite{hochmanshmerkin2015}, Lemma 4.16). 

Secondly, we embed $\mu$ on a line in  $\mathbb{T}^2$ by pushing it forward via the map $x\mapsto (f(x), g(x))$ (recall that we are assuming that both $f$ and $g$ map $[0,1]$ to $[0,1]$). Calling this measure $\tilde{\mu}$, and using the same notation as in Claim \ref{Theorem 5.1}, we have
\begin{equation*}
\nu = \lim_{k\rightarrow \infty} \frac{1}{N_k} \sum_{i=0} ^{N_k-1} \tilde{\mu}_{\mathcal{A}^i \left( f(x),g(x) \right) }
\end{equation*}
by an application of Theorem \ref{Theorem 2.1}. Also, by applying Claim \ref{Claim stable intersections pert}, we see that there is a set $S\subseteq \mathbb{N}$ of density $1$ (possibly depending on the $x$ we chose according to $\mu$), such that for every $i\in S$ the measure $\tilde{\mu}|_{\mathcal{A}^i \left( f(x),g(x) \right) }$ is an affine image of the measure $\mu|_{f^{-1}\mathcal{D}_{m^i} (f(x))}$. Thus, we obtain an analogue of \eqref{Eq analogue}. From here, we complete the proof as in the proof of Claim \ref{Theorem 5.1}.

\item We finish the proof of the Theorem by showing that $\nu$ meets the conditions of Claim \ref{Claim rigidty}. The proof is essentially the same as in the case of Theorem \ref{Conjecture}.
\end{itemize}

\textbf{Proof of Theorem \ref{Main Theorem pert}}
Since we have Theorem \ref{Conjecture pert} at our disposal, the proof is now essentially the same as the proof of Theorem \ref{Main Theorem}. We remark that an analogue of Lemma \ref{Lemma 8.3} remains true in this case as well, which may be deduced from the results of (\cite{hochmanshmerkin2015}, Section 8.4).

\bibliography{bib}{}
\bibliographystyle{plain}

\end{document}